\newcommand{\newcontent}[1]{#1}
\newcommand{\newcontentB}[1]{#1}
\newcommand{\nosemic}{\renewcommand{\@endalgocfline}{\relax}}
\newcommand{\dosemic}{\renewcommand{\@endalgocfline}{\algocf@endline}}
\newcommand{\algorithmfootnote}[2][\footnotesize]{%
  \let\old@algocf@finish\@algocf@finish
  \def\@algocf@finish{\old@algocf@finish
    \leavevmode\rlap{\begin{minipage}{\linewidth}
    #1#2
    \end{minipage}}%
  }%
}
\renewcommand{\eqdef}{:=}
\journalname{Mathematical Programming}
\begin{document}
\title{An active-set algorithm for norm constrained quadratic problems}
\author{Nikitas {Rontsis} \and
    Paul J. Goulart \and \\
    Yuji Nakatsukasa
}

\institute{Nikitas {Rontsis} \and Paul J. Goulart \at
Department of Engineering Science,
University of Oxford,
Oxford, OX1 3PN, UK \\
\email{nrontsis@gmail.com, paul.goulart@eng.ox.ac.uk}
\and
Yuji Nakatsukasa \at
Mathematical Institute,  University of Oxford, Oxford, OX2 6GG, UK, and National Institute of Informatics, Japan. \\
\email{nakatsukasa@maths.ox.ac.uk}
}
\maketitle

\begin{abstract}
We present an algorithm for the minimization of a nonconvex quadratic function subject to linear inequality constraints and a two-sided bound on the 2-norm of its solution. The algorithm minimizes the objective using an active-set method by solving a series of Trust-Region Subproblems (TRS). Underpinning the efficiency of this approach is that the global solution of the TRS has been widely studied in the literature, resulting in remarkably efficient algorithms and software. We extend these results by proving that nonglobal minimizers of the TRS, or a certificate of their absence, can also be calculated efficiently by computing the two rightmost eigenpairs of an eigenproblem. We demonstrate the usefulness and scalability of the algorithm in a series of experiments that often outperform state-of-the-art approaches; these include calculation of high-quality search directions arising in Sequential Quadratic Programming on problems of the \texttt{CUTEst} collection, and Sparse Principal Component Analysis on a large text corpus problem (70 million nonzeros) that can help organize documents in a user interpretable way.
\keywords{Nonconvex optimization \and Trust-region \and Active-set \and Sequential quadratic programming \and Dimensionality reduction}
\subclass{90C26 \and 65F15 \and 90C90}
\end{abstract}

\section{Introduction}
Optimization with spherical $\norm{x}_2 = 1$ or norm constraints $r_{\min} \leq \norm{x}_2 \leq r_{\max}$ is used in a number of scientific fields. Many optimization problems, such as eigenvalue problems \cite{Saad2011}, dimensionality reduction \cite{Jolliffe2011} and compressed sensing \cite{Boufounos2008}, are naturally posed on or in the unit sphere, while norm constraints are often useful for regularization, e.g.\ in general nonlinear \cite{Conn2000} or robust optimization \cite{Jeyakumar2014}. In this paper we consider the solution of norm constrained problems in the form
\begin{equation}\label{eqn:main_problem} \tag{P}
	\begin{array}{ll}
		\text{minimize}   & f(x) \eqdef \frac{1}{2}x^T P x + q^T x \\
		\text{subject to} & r_{\min} \leq \norm{x}_2 \leq r_{\max} \\
		                  & Ax \leq b
	\end{array}
\end{equation}
where $x \in \mathbb{R}^n$ is the decision variable, $P \in \mathbb{S}^{n}$, i.e.\ an $n \times n$ symmetric matrix, $A = [a_1^T \dots a_m^T] \in \mathbb{R}^{m \times n}$, $q \in \mathbb{R}^{n}$, $b = [b_1 \dots b_m]^T \in \mathbb{R}^m$ and $r_{\min}, r_{\max}$ non-negative scalars.

It is generally difficult to solve problems of the form \eqref{eqn:main_problem} due to the nonconvexity of the lower bounding norm constraint and the potential indefiniteness of the matrix $P$, which renders many, but certainly not all \cite{Jeyakumar2014}, of these problems intractable. Even finding a feasible point for \eqref{eqn:main_problem} or testing if a first-order critical point of \eqref{eqn:main_problem} is a local minimizer can be intractable (see Proposition \ref{prop:np_feasibility} and \cite{Gould2005}). As a result, we restrict our attention to the search for first order critical points and we assume that a feasible point is given or can be computed.

A specific tractable variation of \eqref{eqn:main_problem} is the famous Trust-Region Subproblem\footnote{Note that the TRS is typically defined in the ball $\norm{x} \leq r$ rather than the sphere $\norm{x} = r$. We consider only the boundary solution as this fits better the needs of this paper.}:
\begin{equation} \label{eqn:trs_full} \tag{TRS}
	\begin{array}{ll}
		\text{minimize}   & \frac{1}{2}x^T  P x + q^T x \\
		\text{subject to} & \norm{x}_2 = r                   \\
											& Ax = b.
	\end{array}
\end{equation}
Perhaps surprisingly, the TRS can be solved to global optimality despite its nonconvexity, and global solution of the TRS has been widely studied in the literature. Specific approaches to its solution include exact Semidefinite Reformulations, Gradient Steps, Truncated Conjugate Gradient Steps, Truncated Lanczos Steps and Newton root-finding, with associated software packages for its solution. The reader can find details in \cite{Conn2000} and \cite[Chapter 4]{Nocedal2006}.

We suggest an active-set algorithm for the solution of \eqref{eqn:main_problem} that, as we proceed to describe, takes advantage of the existing efficient global solution methods for the TRS. Using such an approach, \eqref{eqn:main_problem} is optimized by solving a series of equality constrained subproblems. Each subproblem is a modification of \eqref{eqn:main_problem} where a subset of its inequalities (called the \emph{working-set}) is replaced by equalities while the rest are ignored. If a subproblem does not include a norm constraint then it is called an \emph{Equality-constrained Quadratic Problem} (EQP); otherwise it is a \emph{Trust Region Subproblem} (TRS). As described earlier, the global solution for the TRS, and also for the EQP \cite{Gould2001}, has been widely studied in the literature. Thus we can efficiently address the global solution of each subproblem. However, a complication exists for the TRS because, unlike EQP, it can exhibit \emph{local-nonglobal} minimizers i.e.\ local minimizers that are not globally optimal, which complicate the analysis. Indeed, if the optimal working-set includes a norm constraint and the global solutions of the respective TRS subproblem are infeasible for \eqref{eqn:main_problem}, then the solution of \eqref{eqn:main_problem} must be obtained from a local-nonglobal optimizer of the TRS subproblem.

Thus, an algorithm for computing local-nonglobal minimizers of the TRS is required for our active-set algorithm. Unfortunately, algorithms for detecting the presence of/computing local-nonglobal solutions of \eqref{eqn:trs_full} are significantly less mature than their global counterparts. A notable exception is the work of Martinez \cite{Martinez1994}, which proved that there exists at most one local-nonglobal minimizer and gave conditions for its existence. Moreover, \cite{Martinez1994} presented a root-finding algorithm for the computation of the Lagrange Multiplier associated with the local-nonglobal minimizer that entailed the computation of the two smallest eigenvalues of the Hessian and the solution of a series of indefinite linear systems.

More recent work is based on a result from \cite{Adachi2017}, which shows that each KKT point of the TRS, in the absence of linear equality constraints $Ax = b$ (i.e.\ with a norm constraint on $x$ only), can be extracted from an eigenpair of
\begin{equation*}
	M \eqdef
	\begin{bmatrix}
		-P & q q^T/r^2    \\
		I & -P
	\end{bmatrix}.
\end{equation*}
This result was used in \cite{Salahi2017} to calculate the local-nonglobal minimizer under the assumption that it exists. We extend these results by proving that both checking if the local-nonglobal minimizer exists and computing it, in the case where it exists, can be performed at the same time by simply calculating the two rightmost eigenpairs of $M$. Crucially, and similarly to the results of \cite{Adachi2017} for the global minimizer, this allows the use of efficient factorization-free methods such as the Arnoldi method for the detection and computation of the local-nonglobal minimizer. Furthermore, we show that this approach can efficiently handle equality constraints $Ax = b$ in the Trust Region Subproblem, without the need for variable reduction, by means of a projected Arnoldi method.

The paper is organised as follows. In Section \ref{sec:trs} we present an introduction to the TRS and present novel results for the detection/computation of its local-nonglobal minimizer and for the incorporation of linear equality constraints. In Section \ref{sec:active_set} we introduce an active-set algorithm for solving \eqref{eqn:main_problem} starting from the special case where $r_{\min} = r_{\max}$, and then generalizing to any $r_{\min}, r_{\max}$. Finally, in Section \ref{sec:experiments}, we conclude with a series of numerical results.

\paragraph{Notation used:} $\mathbb{S}^n$ denotes the set of symmetric $n\times n$ real matrices. Given a vector (or matrix) $x$, $x^T$ denotes its transpose and $x^H$ its conjugate transpose.

\section{The Trust-Region Subproblem} \label{sec:trs}
This section concerns the computation of global as well as local-nonglobal minimizers for \eqref{eqn:trs_full}. Although this section only considers solutions on the boundary $\norm{x} = r$, the results can be trivially extended to the interior $\norm{x} \leq r$. Indeed, if an interior solution exists then the TRS is essentially convex ($P$ is positive semidefinite in the nullspace of $A$ \cite[\S1]{Gould1999}), thus no local-nonglobal solution exists and the interior solution is obtained by solving an equality constrained quadratic problem. The presence of a (necessarily global) interior solution can be detected by checking the sign of the Lagrange Multiplier of the global boundary solution(s) \cite[Lemma 2.2]{Martinez1994}.

For clarity of exposition, we will first assume that there are no linear equality constraints in \eqref{eqn:trs_full}, resulting in the following problem:
\begin{equation} \tag{T} \label{eqn:trs}
	\begin{array}{ll}
		\text{minimize}   & \frac{1}{2}x^T P x + q^T x \\
		\text{subject to} & \norm{x}_2 = r,                   \\
	\end{array}
\end{equation}
where $x \in \mathbb{R}^n$ is the decision variable, and $P \in \mathbb{S}^n$, $q \in \mathbb{R}^n, r \in \mathbb{R}_{+}$ are the problem data. For the rest of the section we will assume that $n > 1$, excluding the trivial one dimensional case where the feasible set consists of two points, and that $r > 0$. We will then show in \S\ref{subsec:trs_full} how to extend the results of this section to allow for the inclusion of linear equality constraints. Finally, in \S\ref{sec:trs_proof} we prove the main result of this section, which is presented in a separate subsection to facilitate the presentation.

In the sequel we will make frequent use of the eigendecomposition of $P \eqdef W \Lambda W^T$ defined by an orthonormal matrix $W \eqdef [w_1 \dots w_n]$ and $\Lambda \eqdef \mathrm{diag}(\lambda_1, \dots, \lambda_n)$ where $\lambda_1 \leq \dots \leq \lambda_n$.

Every KKT point of \eqref{eqn:trs} satisfies
\begin{equation} \label{eqn:trs_stationarity}
	P x + q + \mu x = 0 \Rightarrow x = -(P + \mu I)^{-1} q,
\end{equation}
where $\mu$ is a Lagrange multiplier associated with the norm constraint\footnote{Technically, $\mu$ is the Lagrange multiplier of the equivalent constraint $\frac{1}{2} \norm{x}_2^2 = \frac{1}{2} r^2$. We define \eqref{eqn:trs} with $\norm{x}_2 = r$ for simplicity and to match the notation of \cite{Adachi2017}.}. Equation \eqref{eqn:trs_stationarity} is well defined when $\mu \neq -\lambda_i, \; i = 1, \dots, n$. For the purposes of clarity of the subsequent introduction, which follows \cite[\S 4]{Nocedal2006}, we will assume that this holds. However, the conclusions of this section are independent of this assumption.

Using the feasibility of $x$, we have
\begin{equation} \label{eqn:stationarity_trs}
	\norm{x}_2^2 = r^2 \Leftrightarrow \norm{(P + \mu I)^{-1}q}_2^2 - r^2 = 0.
\end{equation}
Then, noting that
\begin{align*}
	\norm{(P + \mu I)^{-1}q}_2^2
	  & = \norm{(W(\Lambda + \mu I)W^T)^{-1} q}_2^2
	= \norm{W(\Lambda + \mu I)^{-1} W^T q}_2^2 \\
	  & = \norm{(\Lambda + \mu I)^{-1} W^T q}_2^2
	= \sum_{i=1}^n{\frac{(w_i^T q)^2}{(\lambda_i + \mu)^2}},
\end{align*}
we can express the rightmost condition in \eqref{eqn:stationarity_trs} as
\begin{equation} \label{eqn:secular}
	s(\mu) \eqdef \sum_{i=1}^{n}\frac{(w_i^T q)^2}{(\lambda_i + \mu)^2} - r^2 = 0.
\end{equation}
Determining the KKT points of \eqref{eqn:trs} is therefore equivalent to finding the roots of $s$, which is often referred to as the \emph{secular equation} \cite{Nocedal2006}. We depict $s$ for a particular choice of $P, q, r$ in Figure \ref{fig:secular}. As might be expected from the tight connection between polynomial root-finding problems and eigenproblems, solving $s(\mu) = 0$ is equivalent to the following eigenproblem \cite[Equation (22)]{Adachi2017}:
\begin{equation} \label{eqn:eigenvalue_problem}
	\underbrace{
	\begin{bmatrix}
		-P & q q^T/r^2    \\
		I & -P
	\end{bmatrix}}_{\eqdef M}
	\begin{bmatrix}
		z_1 \\
		z_2
	\end{bmatrix}
	= \mu
	\begin{bmatrix}
		z_1 \\
		z_2
	\end{bmatrix}.
\end{equation}
This elegant result was first noted more than 50 years ago \cite[Equation (2.21)]{Forsythe1965}, only to be later disregarded as inefficient by some of the same authors \cite{Gander1989}, and then rediscovered by \cite{Adachi2017} who highlighted its great applicability owing to the remarkable efficacy of modern eigensolvers \cite{Lehoucq1998}.
\begin{figure}
	\centering
	\includegraphics[width=.9\textwidth]{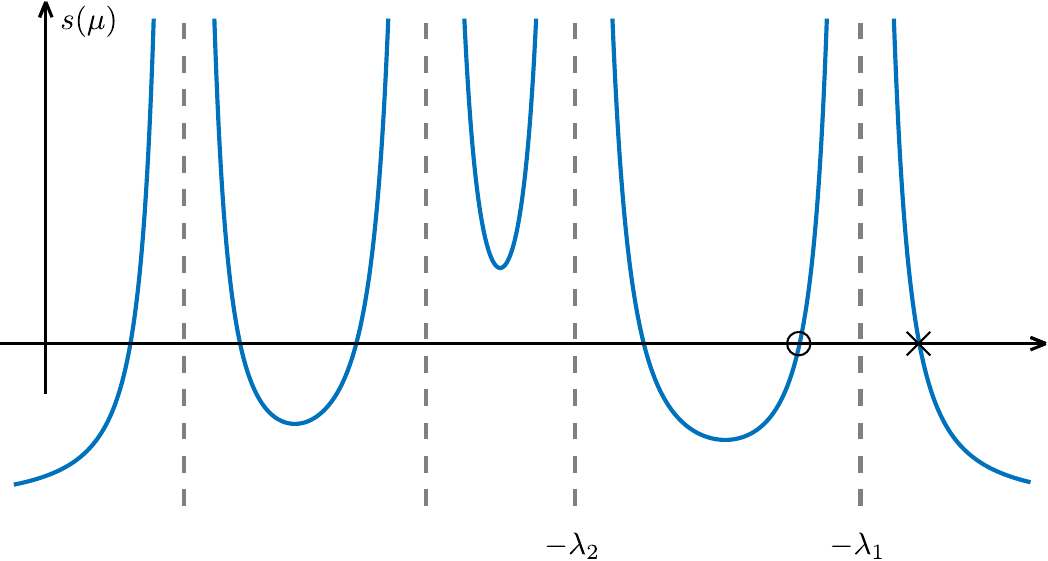}
	\caption{A typical example of the secular equation $s(\mu)$. The dashed vertical lines mark the locations of the eigenvalues of $-P$, i.e.\ $-\lambda_1, \dots, -\lambda_n$. The Lagrange multiplier of the global solution of \eqref{eqn:trs} is the rightmost root of $s(\mu)$ (marked by a cross). The TRS associated with this Figure also exhibits a local-nonglobal minimizer with a Lagrange multiplier that corresponds to the second-rightmost root of $s(\mu)$ (marked by a circle).}
	\label{fig:secular}
\end{figure}

The relation between the spectrum of $M$ and the Lagrange multipliers of \eqref{eqn:trs} is formally stated below:
\begin{lemma} \label{lem:multipliers}
	The spectrum of $M$ includes the Lagrange multiplier of every KKT point of \eqref{eqn:trs}.
\end{lemma}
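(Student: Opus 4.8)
The plan is to show directly that every Lagrange multiplier of \eqref{eqn:trs} is an eigenvalue of $M$ by constructing, for each KKT pair $(x,\mu)$, an explicit nonzero vector $z=[z_1^T,z_2^T]^T$ with $Mz=\mu z$. Writing the eigenproblem \eqref{eqn:eigenvale_problem} blockwise yields the two conditions
\begin{align*}
(P+\mu I)z_1 &= (qq^T/r^2)z_2,\\
z_1 &= (P+\mu I)z_2,
\end{align*}
and the available data are the stationarity and feasibility conditions $(P+\mu I)x=-q$ and $\norm{x}_2=r$ from \eqref{eqn:trs_stationarity}. The construction of $z$ will branch on whether $P+\mu I$ is invertible, since the generic secular-equation reasoning leading to \eqref{eqn:secular} silently assumes invertibility.

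First I would handle the generic case $\mu\notin\{-\lambda_1,\dots,-\lambda_n\}$, where $P+\mu I$ is nonsingular (and $q\neq 0$, as otherwise $x=-(P+\mu I)^{-1}q=0$ would contradict $\norm{x}_2=r>0$). Here I set $z_1=x$ and $z_2=(P+\mu I)^{-1}x$. The second block equation then holds by construction, while the first reduces to $(qq^T/r^2)z_2=(P+\mu I)x=-q$, i.e.\ to the scalar identity $q^Tz_2=-r^2$. This is precisely where the norm constraint enters: substituting $x=-(P+\mu I)^{-1}q$ gives $q^Tz_2=-q^T(P+\mu I)^{-2}q=-\norm{(P+\mu I)^{-1}q}_2^2=-\norm{x}_2^2=-r^2$. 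Since $z_1=x\neq 0$, this $z$ is a genuine eigenvector and $\mu\in\mathrm{spec}(M)$.

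The step I expect to be the main obstacle is the degenerate case $\mu=-\lambda_j$, which is exactly the situation excluded by the derivation of \eqref{eqn:secular} and the reason the lemma must be stated without the invertibility assumption. Here $P+\mu I$ is singular, so $x=-(P+\mu I)^{-1}q$ is meaningless and the previous construction collapses. The key observation is that solvability of the stationarity equation $(P+\mu I)x=-q$ forces $q$ into the range of $P+\mu I$, equivalently $q\perp\ker(P+\mu I)$. I would therefore abandon $x$ and build a purely structural eigenvector: pick any nonzero $z_2\in\ker(P+\mu I)$, which exists by singularity, and set $z_1=0$. The second block equation holds because $(P+\mu I)z_2=0$, and the first holds because $(qq^T/r^2)z_2=(q^Tz_2/r^2)q=0$ by the orthogonality just noted. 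Hence $[0^T,z_2^T]^T$ is a nonzero eigenvector of $M$ with eigenvalue $\mu$.

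Combining the two cases shows every Lagrange multiplier appears in $\mathrm{spec}(M)$, which is the claim. The only nonroutine points are the feasibility identity $q^Tz_2=-r^2$ in the nonsingular case and the extraction of $q\perp\ker(P+\mu I)$ from solvability of the KKT system in the singular case; both are short, and neither needs the eigendecomposition of $P$ beyond the notation already fixed, so the argument is self-contained.
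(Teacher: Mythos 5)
Your proof is correct, but it takes a genuinely different route from the paper's. The paper dispatches this lemma in one line, as a corollary of Proposition \ref{prop:spectrum} (which characterizes $\mathrm{spec}(M)$ as the roots of the secular equation $s$ together with those eigenvalues $-\lambda_i$ of $-P$ whose eigenspace contains a nonzero vector orthogonal to $q$) combined with \cite[Theorem 4.1]{Forsythe1965}, which places every KKT multiplier of \eqref{eqn:trs} among those roots or degenerate values. You instead construct an explicit eigenvector of $M$ for each KKT pair $(x,\mu)$: in the generic case $z = [x;\,(P+\mu I)^{-1}x]$, where the first block equation collapses to the scalar identity $q^T z_2 = -r^2$, verified exactly by feasibility $\norm{x}_2 = r$; and in the degenerate case $\mu = -\lambda_j$ the vector $z = [0;\,z_2]$ with $0 \neq z_2 \in \mathcal{N}(P+\mu I)$, using that solvability of the stationarity equation forces $q \perp \mathcal{N}(P+\mu I)$ (via symmetry of $P+\mu I$, so range and kernel are orthogonal complements). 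Both constructions check out, and your side remarks ($q \neq 0$ and $x \neq 0$ in the generic case) close the only possible loopholes. What each approach buys: yours is elementary and self-contained---it needs neither the secular-equation machinery nor the external citation, and it incidentally exhibits the eigenvector structure ($z_1 \propto x$ generically, $z_1 = 0$ in the degenerate case) that the paper only establishes later in Proposition \ref{prop:spectrum} and exploits in the extraction formula \eqref{eqn:standard_extraction}. The paper's route is economical in context, since Proposition \ref{prop:spectrum} must be proved anyway for Theorem \ref{thm:trs_local_min}, and it delivers strictly more than inclusion---namely the matching of algebraic multiplicities between $\mathrm{spec}(M)$ and the roots of $s$---which a pointwise eigenvector construction like yours cannot provide; but none of that extra strength is needed for the statement at hand.
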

\begin{proof}
	This is a consequence of \newcontent{Proposition \ref{prop:spectrum_new}} of subsection \ref{sec:trs_proof} and \cite[Theorem 4.1]{Forsythe1965}.
\qed\end{proof}
Note that the eigenvalues of $M$ are complex in general since $M$ is asymmetric. Furthermore, Lemma \ref{lem:multipliers} and all the subsequent results allow for potential ``degenerate'' KKT points with $\mu = -\lambda_i$.

We will focus on the KKT points that are minimizers of \eqref{eqn:trs} rather than maximizers or saddle points. The characterization of global minimizers is widely known in the literature. The following theorem shows that the Lagrange multiplier $\mu^g$ of the global minimizer corresponds to the rightmost eigenvalue of $M$ in $\mathbb{C}$, which is always real:
\begin{theorem} \label{thm:trs_global_min}
	A KKT point of \eqref{eqn:trs} with Lagrange multiplier $\mu^g$ is a global minimizer if and only if $\mu^g$ is the rightmost eigenvalue of $M$. Furthermore, we necessarily have that $\mu^g \in [-\lambda_1, \infty)$.
\end{theorem}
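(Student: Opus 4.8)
The plan is to combine the classical characterization of global TRS minimizers with an analysis of where the spectrum of $M$ can lie. Write $A(\mu) := P + \mu I$, and recall that by \eqref{eqn:trs_stationarity} a KKT point with multiplier $\mu$ satisfies $A(\mu)x = -q$ and $\|x\| = r$. The two facts I want are: (i) such a point is a global minimizer if and only if $A(\mu) \succeq 0$, equivalently $\mu \geq -\lambda_1$; and (ii) no eigenvalue of $M$ lies strictly to the right of the resulting $\mu^g$. Together with Lemma \ref{lem:multipliers}, which places every KKT multiplier in the spectrum of $M$, these yield both the equivalence and the containment $\mu^g \in [-\lambda_1,\infty)$.

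For (i) I would argue as follows. On the feasible sphere $\|x\|^2 = r^2$ the objective equals $g(x) := \tfrac12 x^T A(\mu) x + q^T x$ up to the constant $-\tfrac{\mu}{2}r^2$, so minimizing $f$ over the sphere is equivalent to minimizing $g$ there. If $A(\mu)\succeq 0$ then $g$ is convex and the stationarity equation $\nabla g(x)=A(\mu)x + q = 0$ makes the KKT point an unconstrained global minimizer of $g$, hence of $f$ on the sphere. Conversely, if $\mu < -\lambda_1$ then $A(\mu)$ has a negative eigenvalue along $w_1$; using $g(x) = g(x^g) + \tfrac12 (x-x^g)^T A(\mu)(x-x^g)$, I would produce a feasible point of strictly smaller $g$ by moving from $x^g$ along the sphere towards $w_1$ (when $w_1^T x^g \neq 0$, reflecting through the chord $x^g + t w_1$ that re-meets the sphere at $t=-2w_1^Tx^g$; when $w_1^T x^g = 0$, rotating inside $\mathrm{span}\{x^g,w_1\}$, where a short computation gives $g(x(t)) - g(x^g) = \tfrac12 r^2(\lambda_1+\mu)\sin^2 t + O(t^4)<0$). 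This contradicts global optimality, so $\mu^g \geq -\lambda_1$, which is exactly the ``furthermore'' claim.

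For (ii) I would locate the spectrum explicitly. A block-determinant identity (the lower-left block is $I$, which commutes with the lower-right block) and the matrix determinant lemma give $\det(M-\mu I) = \det\big(A(\mu)^2 - qq^T/r^2\big) = -\tfrac{1}{r^2}\prod_{i=1}^n(\lambda_i+\mu)^2\, s(\mu)$, so the eigenvalues of $M$ are exactly the roots of this polynomial, with the roots of the secular function $s$ of \eqref{eqn:secular} among them. Now take $\mu = \sigma + i\tau \in \mathbb{C}$ with $\sigma > \mu^g \geq -\lambda_1$; then $\lambda_i + \sigma > 0$ for every $i$. If $\tau \neq 0$, each term of $s(\mu)$ has imaginary part $-2\tau(\lambda_i+\sigma)(w_i^Tq)^2/|\lambda_i+\mu|^4$, all of one sign, so $\mathrm{Im}\,s(\mu) \neq 0$ and $s(\mu)\neq 0$. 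If $\tau = 0$, then $s$ is real and strictly decreasing on $(-\lambda_1,\infty)$ with $s(\mu^g)\le 0$, so $s(\mu)<0$. In either case $\det(M-\mu I)\neq 0$, so $M$ has no eigenvalue with real part exceeding $\mu^g$. Since $\mu^g$ is itself a real eigenvalue of $M$ by Lemma \ref{lem:multipliers}, it is the rightmost one; conversely any KKT multiplier equal to this rightmost eigenvalue equals $\mu^g \geq -\lambda_1$ and hence is global by (i).

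The main obstacle I anticipate is the degenerate/``hard'' case, where $w_1^Tq = 0$ and $\mu^g = -\lambda_1$, so that $A(\mu^g)$ is only positive \emph{semi}definite and $\mu^g$ enters the spectrum of $M$ through the vanishing factor $\prod_i(\lambda_i+\mu)^2$ rather than as a root of $s$. There I must (a) run the optimality argument of (i) with $A(\mu^g)\succeq 0$ singular, and (b) verify $s(\mu^g)\le 0$ so that the monotonicity step of (ii) still excludes real roots to the right. Reassuringly, the \emph{strict} inequality $\sigma > \mu^g$ keeps $\lambda_i+\sigma>0$ for all $i$ even here, so the imaginary-part argument excluding complex eigenvalues needs no modification.
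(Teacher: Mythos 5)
Your proposal is correct, but there is nothing in the paper to compare it against line-by-line: the paper does not prove Theorem \ref{thm:trs_global_min} at all, it simply cites \cite[Theorem 3.2]{Adachi2017}. Your self-contained argument is therefore a genuinely different (and welcome) route, and its two pillars both check out. Part (i) is the classical Gay/Mor\'e--Sorensen characterization, and your necessity argument is sound: when $w_1^T x^g \neq 0$ the chord step $t = -2w_1^Tx^g$ gives $g(x)-g(x^g)=\tfrac12 t^2(\lambda_1+\mu)<0$, and when $w_1^Tx^g=0$ the cross term $w_1^TA(\mu)x^g=(\lambda_1+\mu)\,w_1^Tx^g$ vanishes identically, so the rotation cost is indeed $\tfrac12 r^2(\lambda_1+\mu)\sin^2 t+O(t^4)$. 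Part (ii)'s factorization $\det(M-\mu I)=-r^{-2}\prod_i(\lambda_i+\mu)^2\,s(\mu)$ (block determinant with identity lower-left block, then the matrix determinant lemma, extended to all $\mu\in\mathbb{C}$ by polynomial identity) is correct; it is in effect an explicit version of what the paper gets from Proposition \ref{prop:spectrum} and \cite{Forsythe1965}, and your imaginary-part computation reproduces the paper's Lemma \ref{lem:aux}, machinery the paper builds only for the local-nonglobal Theorem \ref{thm:trs_local_min} while outsourcing the global case. The one step you left as an obligation---the hard case---does complete exactly as you anticipate: (a) is vacuous, since your sufficiency argument in (i) uses only $A(\mu^g)\succeq 0$ and never invertibility; and (b) holds because in the hard case $q\perp\mathcal{N}(P-\lambda_1 I)$, so every term of \eqref{eqn:secular} with $\lambda_i=\lambda_1$ vanishes identically, $s$ extends continuously to $-\lambda_1$ with value $\norm{y_{\min}}_2^2-r^2$, where $y_{\min}$ is the minimum-norm solution of \eqref{eqn:hard_case_subspace}; this is $\leq 0$ because the global minimizer itself solves that singular system with norm exactly $r$, and strict monotonicity of $s$ on $(-\lambda_1,\infty)$ (or $s\equiv -r^2$ when $q=0$) then excludes real roots to the right. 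Two small points you should make explicit for full rigor: your converse direction tacitly uses that a global minimizer exists and is a KKT point (compactness of the sphere plus LICQ, since $\nabla\norm{x}_2^2=2x\neq 0$ there); and to speak of \emph{the} rightmost eigenvalue you should run your imaginary-part argument also at $\sigma=\mu^g$ (it still works, because the terms with $\lambda_i+\mu^g=0$ carry zero numerators in the hard case), so that no non-real eigenvalue shares the real part $\mu^g$.
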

\begin{proof}
	See \cite[Theorem 3.2]{Adachi2017}.
\qed\end{proof}
In general, \eqref{eqn:trs} is guaranteed to possess a unique global optimizer, except perhaps in a special case that has deservedly been given a special name:
\spnewtheorem*{definition*}{Definition\!\!}{\bfseries}{\rmfamily}
\begin{definition*}[{Hard case}]
	Problem \eqref{eqn:trs} belongs to the \emph{hard case} when \mbox{$\mu^g = -\lambda_1$}.
\end{definition*}
We will see that in the hard case \eqref{eqn:trs} is still guaranteed to have a global minimizer, but it is not necessarily unique. Furthermore, the computation of the minimizer(s) can be more challenging than in the ``standard'' case.

Moreover, due to the nonconvexity of \eqref{eqn:trs} there can exist a local minimizer that is not global. We derive an analogous result for this minimizer that is called ``local-nonglobal'' \cite{Martinez1994}:
\begin{theorem} \label{thm:trs_local_min}
	Problem \eqref{eqn:trs} has a second-order sufficient local-nonglobal minimizer if and only if it does not belong to the hard case and the second-rightmost eigenvalue of $M$ \newcontentB{is real and simple}. Furthermore, if such a minimizer exists, then its Lagrange multiplier $\mu^{\ell}$ is equal to this second-rightmost eigenvalue.
\end{theorem}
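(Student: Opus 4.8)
The plan is to characterize the local-nonglobal (LNG) minimizer through the secular equation \eqref{eqn:secular} and its derivative, and then to match these secular quantities with the spectrum of $M$ via the quadratic-eigenvalue reduction of \eqref{eqn:eigenvale_problem}. Throughout I write $x(\mu) \eqdef -(P+\mu I)^{-1}q$ for the KKT point associated with a multiplier $\mu$ that is not a pole of $s$, and I recall from Theorem~\ref{thm:trs_global_min} that the global multiplier $\mu^g$ is the rightmost eigenvalue of $M$ and satisfies $\mu^g \ge -\lambda_1$, with $\mu^g > -\lambda_1$ precisely in the non-hard case.

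First I would pin down the location and the second-order signature of any LNG minimizer. Standard second-order theory for \eqref{eqn:trs} requires the Hessian of the Lagrangian $P+\mu I$ to be positive semidefinite on the tangent space $\{d : x(\mu)^T d = 0\}$; since an LNG minimizer is not global, $P+\mu I$ must be indefinite, and with exactly one negative eigenvalue this forces $\mu^{\ell} \in (-\lambda_2,-\lambda_1)$ (in particular $\lambda_1$ must be simple). On this interval $s$ is convex (each summand $1/(\lambda_i+\mu)^2$ is convex off its pole) and, in the non-hard case, tends to $+\infty$ at both endpoints, so it has $0$, $1$ (double) or $2$ roots. To select the minimizer among these roots I would convert the second-order sufficient condition into secular form: for $\mu$ not a pole (so $P+\mu I$ is nonsingular) a bordered-Hessian inertia argument shows that $P+\mu I$ restricted to $x(\mu)^{\perp}$ is positive definite iff $x(\mu)^T(P+\mu I)^{-1}x(\mu) < 0$, and a short computation rewrites this as $q^T(P+\mu I)^{-3}q < 0$, i.e.\ $s'(\mu) > 0$. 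Hence, when it exists, the LNG minimizer is exactly the larger of the two roots of $s$ in $(-\lambda_2,-\lambda_1)$ (the one with $s'>0$), it exists iff the minimum of $s$ on this interval is strictly negative, and at this point the root is simple.

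Next I would translate these statements into the language of the spectrum of $M$. Eliminating $z_1$ in \eqref{eqn:eigenvale_problem} reduces it to the quadratic eigenvalue problem $\big[(P+\mu I)^2 - qq^T/r^2\big]z_2 = 0$, whose characteristic determinant factors, up to a positive constant, as $\prod_{i}(\lambda_i+\mu)^2\, s(\mu)$. Thus, away from the poles, the eigenvalues of $M$ are precisely the real and complex roots of $s$ with multiplicities preserved, and a double root of $s$ or a coincidence $\mu^{\ell} = -\lambda_i$ is exactly what makes the corresponding eigenvalue non-simple or places it in the spectrum of $-P$. Consequently the three conditions ``real, simple, not in $\mathrm{spec}(-P)$'' on the relevant eigenvalue are equivalent to ``two distinct real roots of $s$ in $(-\lambda_2,-\lambda_1)$, neither at a pole'', which by the previous paragraph is equivalent to the existence of a second-order-sufficient LNG minimizer, and in that case the eigenvalue equals $\mu^{\ell}$. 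The remaining step is ordering: I must show that this root is the \emph{second}-rightmost eigenvalue of $M$, i.e.\ that the only eigenvalue with larger real part is $\mu^g$. For this I would use that $s$ has no root in $[-\lambda_1,\mu^g)$ (convexity on $(-\lambda_1,\infty)$ together with $s\to+\infty$ as $\mu\to -\lambda_1^{-}$ in the non-hard case) combined with the interval-by-interval root count of $\prod_i(\lambda_i+\mu)^2 s(\mu)$ supplied by Proposition~\ref{prop:spectrum} and \cite[Theorem~4.1]{Forsythe1965}, invoking the uniqueness of the LNG minimizer from \cite{Martinez1994}.

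The main obstacle I anticipate is precisely this global ordering claim, and in particular controlling the complex eigenvalues of $M$: I must rule out any complex conjugate pair, or any real root arising from a different pole interval, having real part strictly between $\mu^{\ell}$ and $\mu^g$, so that ``second-rightmost'' genuinely selects the interval $(-\lambda_2,-\lambda_1)$. When $s$ has no real root there, its two associated eigenvalues become a complex conjugate pair whose real part must be shown to dominate all eigenvalues except $\mu^g$; this is what lets the non-realness of the second-rightmost eigenvalue certify the \emph{absence} of an LNG minimizer. A secondary difficulty is the careful bookkeeping of the degenerate and hard-case boundaries ($w_1^Tq=0$, a repeated $\lambda_1$, and double roots where $s'=0$), which is where the hypotheses ``not the hard case'' and ``simple'' become indispensable for making the equivalence exact.
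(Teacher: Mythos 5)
Your overall architecture matches the paper's: re-derive Martinez's secular characterization of the second-order sufficient local-nonglobal minimizer (your first paragraph is essentially Lemma~\ref{lem:martinez}), identify the spectrum of $M$ with the roots of $s$ plus degenerate eigenvalues of $-P$ (your second paragraph is essentially Proposition~\ref{prop:spectrum}), and then prove an ordering statement identifying $\mu^{\ell}$ with the second-rightmost eigenvalue. But there is a genuine gap exactly where you flag ``the main obstacle'': you never supply a mechanism that controls the \emph{complex} eigenvalues of $M$, and none of the tools you invoke can do it. Proposition~\ref{prop:spectrum} and \cite[Theorem 4.1]{Forsythe1965} only say that the eigenvalues of $M$ are the roots of $s$ (plus degenerate ones); an ``interval-by-interval root count'' counts real roots in real intervals and says nothing about where non-real roots lie; and the uniqueness of the local-nonglobal minimizer from \cite{Martinez1994} constrains real roots with $s'>0$ in $(-\lambda_2,-\lambda_1)$, not the real parts of complex conjugate pairs. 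Since $M$ is nonsymmetric, complex roots of $s$ are genuinely possible, and without excluding a conjugate pair with real part in $(\mu^{\ell},\mu^{g})$, the identification ``local-nonglobal multiplier $=$ second-rightmost eigenvalue'' fails in both directions of the theorem.

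The paper closes this hole with a complex-analytic counting device that is absent from your proposal: Lemma~\ref{lem:aux} shows $\mathrm{Re}\,s(a+bi) < s(a)$ for $b \neq 0$, and Lemma~\ref{lem:cauchy} applies the argument principle to the half-plane $\set{z}{\mathrm{Re}\, z \geq x}$ to conclude that whenever $s(x)<0$ at a real $x$, the number of zeros of $s$ with real part $\geq x$ (counted with multiplicity, complex ones included) equals the number of poles there. Taking $x = \mu^{\ell}-\epsilon$, where $s<0$ because $s(\mu^{\ell})=0$ and $s'(\mu^{\ell})>0$, the only pole to the right is the double pole $-\lambda_1$, so there are exactly two roots with real part $\geq x$, namely $\mu^{g}$ and $\mu^{\ell}$; this is Lemma~\ref{lem:argument_principle}(i) and it is precisely what makes ``second-rightmost'' exact. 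The same device (part (ii) of that lemma) supplies the bound $\mu^{\ell} \geq -\lambda_2$ needed in the ``if'' direction, which you instead assert via ``$s\to+\infty$ at both endpoints''; note that this assertion requires $q^T w_2 \neq 0$, and when $q \perp w_2$ the blow-up at $-\lambda_2$ disappears --- the paper handles that case separately by using Proposition~\ref{prop:spectrum} to place $-\lambda_2$ in the spectrum of $M$ and appealing to the reality/simplicity hypotheses. So your proposal is a correct skeleton that mirrors the paper's decomposition, but the decisive step --- the half-plane zero/pole count via the argument principle --- is missing, and the substitutes you name cannot replace it.
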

\begin{proof}
	See \S \ref{sec:trs_proof}.
\qed\end{proof}

The Lagrange multipliers for the global and local-nonglobal minimizers of \eqref{eqn:trs} can therefore be identified by calculating the two rightmost eigenvalues\footnote{Except perhaps when the local-nonglobal minimizer is not second-order sufficient.} of $M$, which can be done efficiently e.g.\ with the Arnoldi method.

Furthermore, for each of the respective multipliers a corresponding minimizer can be extracted immediately  by the respective eigenvectors $[z_1^*;\; z_2^*]$ of $M$ using as
\begin{equation} \label{eqn:standard_extraction}
	x^* = -\text{sign}(q^T z_2^*) r \frac{z_1^*}{\norm{z_1^*}_2},
\end{equation}
unless $z_1^* = 0$. Indeed, this follows from the next Proposition:
\newcontentB{
\begin{proposition}
	Every eigenpair $(\mu^*, [z_1^*; \; z_2^*])$ of $M$ gives a KKT point $(\mu^*, x^*)$  for \eqref{eqn:trs} where $x^*$ is defined by \eqref{eqn:standard_extraction}, unless $z_1^* = 0$.
\end{proposition}
\begin{proof}
	This proof is based on \cite[\S3.3]{Adachi2017} that shows the result only for the rightmost eigenpair of $M$.

	We will first show the result for $y^* = \frac{-r^2}{q^T z_2^*}z_1^*$ and then show that $y^* = x^*$. Note that using $x^*$ is preferred over $y^*$ because it can avoid unnecessary numerical errors according to \cite[\S3.3]{Adachi2017}.  We also emphasize that all of the proof assumes that $z_1^*$ is nonzero.

	For $y^*$, we have to show that it is well defined, feasible and stationary. Regarding the first two points, note that \eqref{eqn:eigenvalue_problem} (first row) gives
	\begin{equation*}
		-P z_1^* + q q^Tz_2^*/r^2 = \mu^* z_1^* \Rightarrow
		(z_2^*)^T (P + \mu^* I)z_1^* = (z_2^*)^T q q^T z_2^*/r^2,
	\end{equation*}
	or since $(P + \mu^* I)z_2^* = z_1^*$ due to \eqref{eqn:eigenvalue_problem} (second row), we have $\norm{z_1^*}^2 = {(q^Tz_2^*/r)}^2$, which implies that $q^T z_2^* \neq 0$ because $z_1^* \neq 0$ by assumption. Thus:
	\begin{equation*}
		\norm{z_1^*}^2_2 = (q^Tz_2^*/r)^2 \Rightarrow \norm{-\frac{r^2}{q^T z_2^*}z_1^*}_2^2 = r^2 \Rightarrow \norm{y^*}_2 = r.
	\end{equation*}

	To show stationarity, note that the first row of \eqref{eqn:eigenvalue_problem} also gives:
	\begin{equation*}
		-P z_1^* + qq^T z_2^*/r^2 = \mu^* z_1^* \Rightarrow P \frac{-z_1^*r^2}{q^T z_2^*} + q + \mu^* \frac{-z_1^*r^2}{q^T z_2^*} = 0 \Rightarrow P y^* + q + \mu^* y^* = 0.
	\end{equation*}

	Finally, we show that $y^* = x^*$:
	\begin{equation*}
		y^* = y^* \frac{r} {\norm{y^*}_2} = \frac{-r^2z_1^*}{q^T z_2^*}  {\frac{|q^T z_2^*|}{\norm{-r^2 z_1^*}}}_2 r = -\text{sign}(q^T z_2^*) r \frac{z_1^*}{\norm{z_1^*}_2} = x^*.
	\end{equation*}
	\qed
\end{proof}
}

\paragraph{Dealing with the hard case:}
Extracting a solution $x^*$ via \eqref{eqn:standard_extraction} is not possible when $z_1^* = 0$, since \eqref{eqn:standard_extraction} is not well-defined in that case. \newcontentB{In this case, $(\mu^*, z_2^*)$ is an eigenpair of $-P$ due to \eqref{eqn:eigenvalue_problem}. Note that, according to \cite[Lemma 3.3]{Martinez1994},} this can never happen for the local-nonglobal minimizer; thus we can always extract the local-nonglobal minimizer with \eqref{eqn:standard_extraction}. However, it is possible for global minimizers in the hard case, i.e.\ the case where $\mu^* = -\lambda_1$. In the hard case, the necessary and sufficient conditions for global optimality are, due to the KKT conditions of \eqref{eqn:trs} and Theorem \eqref{thm:trs_global_min}, the following:
\begin{align}
	\label{eqn:hard_case_subspace}
	(P - \lambda_1 I) x + q = 0 \\
	\label{eqn:hard_case_norm}
	\norm{x}_2 = r.
\end{align}
Note that $P - \lambda_1 I$ is singular, and the above system of equations represents the intersection of an affine subspace with a sphere. This intersection must be non-empty, since \eqref{eqn:trs} necessarily has a global minimizer as it arises from the minimization of a smooth function over a compact subset of $\mathbb{R}^n$. One can then solve \eqref{eqn:hard_case_subspace}--\eqref{eqn:hard_case_norm}, by computing the minimum length solution $y_{\text{min}}$ of $(P - \lambda_1 I) x + q = 0$ and return $x^* = y_{\text{min}} + \alpha \upsilon$ where $\alpha$ is a scalar such that $\norm{x^*}_2 = \norm{y_{\text{min}} + \alpha \upsilon}_2 = r$ and $\upsilon$ is any null-vector of $P - \lambda_1I$. Interestingly, \newcontentB{when $z_1^*=0$, $z_2^*$ is a null vector of $P - \lambda_1\newcontent{I}$ due to \eqref{eqn:eigenvalue_problem}}, thus the only additional computation for extracting a solution when \newcontentB{$z_1^* = 0$} is finding the minimum-length solution of a symmetric linear system. This can be achieved e.g. with MINRES-QLP \cite{Choi2011}, or with the Conjugate Gradient method \cite[Theorem 4.3]{Adachi2017}.

\subsection{Equality-constrained Trust-Region Subproblems} \label{subsec:trs_full}
We will now extend the preceding results of this section to also account for the presence of linear equality constraints, $Ax = b$, in the TRS. We will show that, given an operator that projects an $n-$dimensional vector to the nullspace of $A$, we can  calculate global as well as local-nonlocal minima of \eqref{eqn:trs_full} by applying a projected Arnoldi method to $M$.

For simplicity, and without loss of generality\footnote{Problems with linear constraints of the form $Ax = b$ can be transformed to \eqref{eqn:trs_equality} via a change of variables $\tilde x = x + x_0$ where $x_0 \perp \mathcal{N}(A)$ such that $Ax_0 = b$, resulting in a quadratic cost and constraints: $A\tilde x = 0, \; \norm{\tilde x}_2 = \sqrt{r^2 - \norm{x_0}_2^2}$.}, we will assume that $b = 0$, thus solving the following problem:
\begin{equation}\label{eqn:trs_equality}
	\begin{array}{ll}
		\text{minimize}   & \frac{1}{2}x^T P x + q^T x \\
		\text{subject to} & \norm{x}_2 = r                         \\
											& Ax = 0
	\end{array}
\end{equation}
where $x \in \mathbb{R}^n$ is the decision variable, $P \in \mathbb{S}^{n}$, i.e.\ an $n \times n$ symmetric matrix, $A$ is a $m \times n$ matrix of full row rank and $q$ is an $n-$dimensional vector. Similarly to the preceding results for \eqref{eqn:trs}, we assume that $n - m> 1$ and $r > 0$.

In principle, \eqref{eqn:trs_equality} can be solved with the ``eigenproblem approach'' described in this section if we reduce \eqref{eqn:trs_equality} into a smaller TRS subproblem via a nullspace elimination procedure \cite[\S 15.3]{Nocedal2006}, obtaining
\begin{equation}\label{eqn:trust_region}
	\begin{array}{ll}
		\text{minimize}   & \frac{1}{2}y^T \tilde P y + \tilde q^T y\\
		\text{subject to} & \norm{y}_2 = r
	\end{array}
\end{equation}
where we define $Z$ to be a $n \times (n-m)$ orthonormal matrix with $\mathcal{R}(Z) = \mathcal{N}(A)$ and
\begin{align*}
	\tilde P \eqdef Z^T P Z, \quad \tilde q \eqdef Z^Tq.
\end{align*}
According to the preceding results of this section, global and local-nonglobal minimizer(s) $y^*$ of \eqref{eqn:trust_region} can be computed by calculating the two rightmost eigenvectors of $$\tilde M \eqdef \begin{bmatrix}
	-\tilde P & \tilde q \tilde q^T/r^2    \\
	I & -\tilde P
\end{bmatrix}.$$ The respective solution(s) $x^*$ of \eqref{eqn:main_problem} can then be recovered as
\begin{equation} \label{eqn:x-y}
	x^* = Zy^*.
\end{equation}
Some disadvantages of this approach are that a nullspace basis is required and that the structure or sparsity of the problem might be destroyed.

We will present an alternative method that avoids these issues, mirroring standard approaches for solving equality constrained quadratic programs \cite{Gould2001}. To this end, note that
\begin{equation*}
	\tilde M =
	\begin{bmatrix}
		-\tilde P & \tilde q \tilde q^T/ r^2 \\
		I & -\tilde P
	\end{bmatrix}
	=
	\begin{bmatrix}
		Z^T & 0\\
		0 & Z^T\\
	\end{bmatrix}
	\begin{bmatrix}
		-P & q q^T / r^2 \\
		I & -P
	\end{bmatrix}
	\begin{bmatrix}
		Z & 0\\
		0 & Z\\
	\end{bmatrix}.
\end{equation*}
Since Z is an orthonormal matrix, every eigenvalue-eigenvector pair $\{\mu, [\tilde z_1;\; \tilde z_2] \}$ of $\tilde M$ corresponds to an eigenvalue-eigenvector pair $\{\mu, [z_1;\; z_2] \}$ of the matrix
\begin{equation} \label{eqn:arnoldi_matrix}
	\begin{bmatrix}
		ZZ^T & 0\\
		0 & ZZ^T\\
	\end{bmatrix}
	\underbrace{
	\begin{bmatrix}
		-P &  q  q^T/ r^2 \\
		I & -P
	\end{bmatrix}}_{=M}
	\begin{bmatrix}
		ZZ^T & 0\\
		0 & ZZ^T\\
	\end{bmatrix}
\end{equation}
with $Z [\tilde z_1;\; \tilde z_2] = [z_1;\; z_2]$. Note that \eqref{eqn:arnoldi_matrix} also has an extra eigenvalue at zero with multiplicity \newcontent{$2m$}.

Recalling that $Z$ is an orthonormal matrix spanning $\mathcal{N}(A)$, we note that multiplication with $ZZ^T$ is equivalent to projection onto the nullspace of $A$. In practice, eigenvalues of \eqref{eqn:arnoldi_matrix} can therefore be calculated using the Arnoldi method on $M$ where we project the starting vector and every requested matrix-vector product with $M$ onto the nullspace of $\begin{bmatrix}A & 0 \\ 0& A \end{bmatrix}$.

Note that the two rightmost eigenvalues of $\tilde M$ correspond to the two rightmost eigenvalues of \eqref{eqn:arnoldi_matrix} if at least two eigenvalues of $\tilde M$ have positive real parts. We will show that we can always transform \eqref{eqn:trs_equality} so that this holds. Indeed if we shift $P$ to $P - \alpha I$ with an appropriate $\alpha \in \mathbb{R}$ so that $\tilde P$ become negative definite\footnote{Such an $\alpha$ can be obtained by applying e.g., the Lanczos method, or the Gershgorin circle Theorem on $P$.}, then $\tilde M$ has at least two eigenvalues with positive real part:
\begin{lemma}
	If $\tilde P \in \mathbb{S}^{n - m}$ is negative definite then $\tilde M$ has at least two eigenvalues with positive real part, unless we trivially have $n - m = 1$.
\end{lemma}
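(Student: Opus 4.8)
The plan is to reduce the spectrum of $\tilde M$ to the roots of a quadratic matrix polynomial and then track those roots along a homotopy in $\tilde q$, using the negative definiteness of $\tilde P$ to forbid crossings of the imaginary axis away from the origin. First I would dispose of the trivial subcase $\tilde q = 0$, where $\tilde M$ is block lower-triangular with eigenvalues $-\tilde\lambda_i > 0$ (each of algebraic multiplicity two, writing $\tilde\lambda_1 \le \dots \le \tilde\lambda_{n-m} < 0$ for the eigenvalues of $\tilde P$), so that all $2(n-m) \ge 4$ eigenvalues already lie in the open right half-plane. Assuming henceforth $\tilde q \ne 0$, I would write the eigenequation $\tilde M[z_1;\, z_2] = \mu[z_1;\, z_2]$ blockwise; the second block gives $z_1 = (\tilde P + \mu I)z_2$, and substituting into the first shows that $\mu$ is an eigenvalue of $\tilde M$ (necessarily with $z_2 \ne 0$) if and only if $\det Q(\mu) = 0$, where $Q(\mu) := (\tilde P + \mu I)^2 - \tilde q\tilde q^T/r^2$. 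Since $\det Q$ is a monic polynomial of degree $2(n-m)$, the matrix $\tilde M$ has exactly $2(n-m)$ eigenvalues counted with multiplicity, depending continuously on the data.

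The crux of the argument, and the step I expect to be the main obstacle, is to show that for every nonzero $\tilde q$ the matrix $\tilde M$ has no purely imaginary eigenvalue other than possibly $0$. Suppose $\mu = i\beta$ with $\beta \ne 0$ were an eigenvalue. Since the eigenvalues $\tilde\lambda_j + i\beta$ of $\tilde P + i\beta I$ are all nonzero, $(\tilde P + i\beta I)^2$ is invertible, so reducing $Q(i\beta)z = 0$ as above forces $\tilde q^T(\tilde P + i\beta I)^{-2}\tilde q = r^2$. Expanding in the eigenbasis of $\tilde P$ gives $\sum_j (\tilde w_j^T\tilde q)^2/(\tilde\lambda_j + i\beta)^2 = r^2$, whose imaginary part equals $-2\beta\sum_j \tilde\lambda_j(\tilde w_j^T\tilde q)^2/(\tilde\lambda_j^2 + \beta^2)^2$. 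Because $\tilde P \prec 0$, every $\tilde\lambda_j < 0$, so this sum is strictly negative (unless $\tilde q = 0$) and its imaginary part cannot vanish, a contradiction. This is exactly where negative definiteness is essential: it makes all the weights $\tilde\lambda_j$ share one sign, ruling out the cancellation that would otherwise permit an imaginary eigenvalue.

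With the imaginary axis cleared away from the origin, I would finish by a continuity and counting argument along the path $\tilde q_t := t\tilde q$, $t \in [0,1]$. At $t = 0$ all $2(n-m)$ eigenvalues lie in the open right half-plane; by the previous step no eigenvalue of the corresponding $\tilde M_t$ can reach the imaginary axis at any nonzero point, so the number of right half-plane eigenvalues can change only when an eigenvalue passes through $0$. A zero eigenvalue occurs iff $\det Q_t(0) = \det(\tilde P^2)\bigl(1 - t^2\,\tilde q^T\tilde P^{-2}\tilde q/r^2\bigr) = 0$, i.e.\ only at the single value $t_* = r/\sqrt{\tilde q^T\tilde P^{-2}\tilde q}$, where $\det Q_t(0)$ changes sign transversally; the eigenvalue responsible is the simple real root of $\det Q_t$ lying to the left of all the (positive) numbers $-\tilde\lambda_j$, which crosses $0$ exactly once as $t$ increases. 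Hence at most one eigenvalue ever leaves the open right half-plane, so at $t = 1$ at least $2(n-m) - 1$ eigenvalues have positive real part. Since $n - m \ge 2$ this is at least $3 \ge 2$, proving the claim; the exclusion $n - m = 1$ is precisely the case $2(n-m) - 1 = 1$, where the bound degrades to a single guaranteed eigenvalue.
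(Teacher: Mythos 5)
Your proposal is correct in substance, but it reaches the conclusion by a genuinely different route than the paper's proof. The paper works directly with the secular function $s(\mu) = \sum_j (w_j^Tq)^2/(\lambda_j+\mu)^2 - r^2$: combining the fact (its Proposition on the spectrum of $M$) that the eigenvalues of $M$ are the roots of $s$ together with certain eigenvalues of $-P$ (all positive here) with two consequences of $\lambda_j < 0$ --- that $s$ is strictly monotone on the negative real axis, so it has at most one real root there, and that $\mathrm{Im}\,s(a+bi) = -\sum_j 2(\lambda_j+a)b\,(w_j^Tq)^2/\bigl((\lambda_j+a)^2+b^2\bigr)^2$ cannot vanish for $a<0$, $b\neq 0$, so $s$ has no non-real roots in the open left half-plane --- it concludes that at most one of the $2n\geq 4$ eigenvalues of $M$ has negative real part. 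Your key exclusion step is exactly this imaginary-part computation specialized to $a=0$, but you embed it in a different counting mechanism: you derive the reduction to $\det Q(\mu)=0$ with $Q(\mu) = (\tilde P+\mu I)^2 - \tilde q\tilde q^T/r^2$ self-containedly (note that $\det(\mu I - \tilde M) = \det Q(\mu)$ exactly, by the block-determinant formula with the commuting lower blocks $-I$ and $\mu I + \tilde P$, so multiplicities match), and then run a homotopy $\tilde q_t = t\tilde q$, arguing that eigenvalues can leave the open right half-plane only through the origin, and only at the single parameter value $t_*$. What this buys: your argument does not rely on the paper's spectrum Proposition, and it treats eigenvalues \emph{on} the imaginary axis explicitly (the paper's phrase ``at most one eigenvalue with negative real part'' leaves a possible eigenvalue with zero real part unaccounted for), yielding the sharper count of $2(n-m)-1$ eigenvalues in the open right half-plane.

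One step of your argument is asserted rather than proved, and it is load-bearing: the simplicity of the zero eigenvalue at $t=t_*$. If $0$ were a root of $\det Q_{t_*}$ of multiplicity $k\geq 2$, then $k$ eigenvalues could cross into the left half-plane at $t_*$ and your bound would degrade to $2(n-m)-k$; moreover, the transversal sign change of $t\mapsto \det Q_t(0)$ does not by itself imply that the root at $\mu=0$ is simple (a multiple root splitting into Puiseux branches is compatible with a transversal sign change in the parameter). The fix is one more computation in the same spirit as the rest of your proof: for $\mu$ near $0$ one has $\det Q_{t}(\mu) = \prod_j(\tilde\lambda_j+\mu)^2\bigl(1 - t^2\,\tilde q^T(\tilde P+\mu I)^{-2}\tilde q/r^2\bigr)$, and the derivative of the scalar factor at $\mu=0$ equals $2t^2\,\tilde q^T\tilde P^{-3}\tilde q/r^2$, which is strictly negative because $\tilde P\prec 0$ forces $\tilde P^{-3}\prec 0$ and $\tilde q\neq 0$; hence the origin is a simple root. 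With that line added, your proof is complete.
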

\begin{proof}
	To avoid redefining the notation used in the introduction of \S2 we will show the equivalent result for $(P, M)$, i.e.\ that $M$ has at least two eigenvalues with positive real part when $P$ is negative definite and $n > 1$.

	\newcontent{
   To do so, it suffices to show that $s$ has at most one root with non-positive real part. This is because $M$ has $2n \geq 4$ eigenvalues and the spectrum of $M$ matches the roots of $s$ on the left-hand complex plane (shown in Proposition \ref{prop:spectrum_new} that we will prove on \S \ref{sec:trs_proof}, given that $P$ is negative definite).}

	To show this, note that $s$ has at most one (simple) real root with \newcontent{nonpositive} real part since
	 \begin{equation} \label{eqn:s_dot}
		s'(\newcontentB{\alpha}) = -2\sum_{j=1}^{n}\frac{(w_j^Tq)^2}{(\lambda_j + \alpha)^3}
	\end{equation}
	is \newcontentB{positive} for any \newcontent{$\alpha \leq 0$} since $\lambda_1, \dots, \lambda_n < 0$ by assumption. To conclude the proof, it suffices to show that all the roots of $s$ with \newcontent{nonpositive} real part are real. Indeed, note that for any $a, b \in \mathbb{R}$ we have
	\begin{align*}
		\text{Im}(s(a + bi)) = -\sum_{j=1}^{n}\frac{2(\lambda_j + a)b}{((\lambda_j + a)^2 + b^2)^2}(w_j^Tq)^2,
	\end{align*}
	where $i$ is the imaginary unit. A detailed derivation of the above equation is provided in Lemma \ref{lem:aux}. For any \newcontent{$a
	\leq 0$} we have \newcontent{$-a \geq 0 > \lambda_1, \dots, \lambda_n \Rightarrow 2(\lambda_j + a) < 0$}, $j = 1, \dots, n$ by assumption. Thus $\text{Im}(s(a + bi)) \neq 0 \Rightarrow s(a + bi) \neq 0$ for any \newcontent{$a \leq 0$} unless $b = 0$.
\qed\end{proof}
Such a shift would not affect the (local/global) optimizers of \eqref{eqn:main_problem} as
\begin{align*}
	\frac{1}{2}x^T (P - \alpha I) x + q^T x
	&= \frac{1}{2}x^T P x + q^T x - \frac{\alpha}{2} ||x||_2^2 \\
	\intertext{or, since $\norm{x}_2 = r$,}
	\frac{1}{2}x^T (P - \alpha I) x + q^T x
	&= \frac{1}{2}x^T P x + q^T x - \frac{\alpha}{2} r^2,
\end{align*}
where $\alpha \in \mathbb{R}$. Thus, we can always transform \eqref{eqn:trs_equality} so that the two rightmost eigenvalues of $\tilde M$ are equal to the two rightmost eigenvalues of \eqref{eqn:arnoldi_matrix}.

Having computed the two rightmost eigenvalues/vectors with a projected Arnoldi method applied to $M$, global/local solutions of \eqref{eqn:main_problem} can be obtained from the respective eigenvectors. Indeed, recall that according to \eqref{eqn:standard_extraction} and \eqref{eqn:x-y}, an appropriate (rightmost/second-rightmost) eigenvector $[\tilde z_1^*;\; \tilde z_2^*]$ of \eqref{eqn:arnoldi_matrix} gives a solution $x^*$ to \eqref{eqn:main_problem} as follows
\begin{align} \label{eqn:standard_extraction_full}
	x^* &= Zy^* = -Z\text{sign}(\tilde q^T  \tilde z_2^*)r\frac{\tilde z_1^*}{\norm{\tilde z_1^*}_2} = -\text{sign}(q^T (Z \tilde z_2^*))r\frac{Z \tilde z_1^*}{\norm{Z\tilde z_1^*}_2}\\
	&= -\text{sign}(q^T  z_2^*)r\frac{z_1^*}{\norm{z_1^*}_2}
\end{align}
where $[z_1^*;\; z_2^*]$ is an eigenvector of $M$. Thus, $x^*$ can be extracted solely by the eigenvector of $M$ unless $z_1^* = 0$.

\paragraph{Dealing with the hard case:} \newcontentB{In the case where $z_1^* = 0$, which can appear in the hard case,} one would have to calculate the minimum length solution of
\begin{equation} \label{eqn:subspace_hardcase_constraints}
	(\tilde P + \mu^* I) y + \tilde q = 0 \\
\end{equation}
and then construct a global solution $x^*$ as $x^* = Z(y + \alpha \tilde z_2^*) = Zy + \alpha z_2^*$, where $\alpha$ is a scalar such that $\norm{x^*}_2 = r$. However, the minimum length solution of \eqref{eqn:subspace_hardcase_constraints} can be obtained by the minimum length solution of
\begin{align} \label{eqn:hard-case-characterization}
(P + \mu^* I) x +  q = 0 \\
	Ax = 0
\end{align}
which can be calculated via projected MINRES-QLP or a projected CG algorithm \cite[Theorem 4.3]{Adachi2017} \cite{Gould2001}.

\newcontentB{
\subsection{An algorithm for computing global and local solutions of the TRS} \label{subsec:trs_algorithm}
Using the ideas presented in this section, we now present Algorithm \ref{alg:trs}, a practical algorithm for computing global and second-order sufficient local-nonglobal solutions of equality constrained trust-region subproblems. In the remainder of this section, we establish the correctness of Algorithm \ref{alg:trs}.
}
\begin{algorithm}
	\textbf{Given} the problem data $P \in \mathbb{S}^n$, $q \in \mathbb{R}^n$, $r \in \mathbb{R}_+$, $A \in \mathbb{R}^{m \times n}$ where $A$ is full row rank with $m < n - 1$ and $\Pi : \mathbb{C}^n \mapsto \mathbb{C}^n$, a projector into the nullspace of $A$\;
	Shift $P$ by a multiple of identity so that it becomes negative definite\;
	Run an Arnoldi method on
	\begin{equation*}
		\Pi \left(
		\begin{bmatrix}
			- P &  q  q^T/  r^2    \\
			I & -P
		\end{bmatrix}
		\begin{bmatrix}
			z_1 \\
			z_2
		\end{bmatrix}
		\right)
		= \mu
		\begin{bmatrix}
			z_1 \\
			z_2
		\end{bmatrix},
	\end{equation*}
	where $\Pi\left(\begin{bmatrix}
		x_1 \\
		x_2
	\end{bmatrix}\right) \eqdef \begin{bmatrix}
		\Pi(x_1) \\
		\Pi(x_2)
	\end{bmatrix}$ for any $x_1, x_2 \in \mathbb{C}^n$, starting from an initial point $\Pi(z) \in \mathbb{C}^{2n}$, to calculate a rightmost eigenvalue/vector pair $(\mu^g, z^g)$\;

	\uIf{$z_1^g \neq 0$}{
			$x^g \leftarrow -\text{sign}(q^T z_2^g) r \frac{z_1^g}{\norm{z_1^g}_2}$\;
			Resume the projected Arnoldi method to compute a second-rightmost eigenvalue/vector $(\mu^{\ell}, z^{\ell})$\;
			\uIf{$\mu^{\ell}$ \newcontentB{is real and simple}}{
				$x^{\ell} \leftarrow -\text{sign}(q^T z_2^{\ell}) r \frac{z_1^{\ell}}{\norm{z_1^{\ell}}_2}$\;
				\textbf{Return} $x^g$ and $x^{\ell}$ as the unique global and local-nonglobal minimizers\;
			}
			\Else{
				\textbf{Return} $x^g$ as a global minimizer and state that no second-order sufficient local-nonglobal minimizer exists.\;
			}
	}
	\Else{
		Compute the minimum length solution $x_{\min}$ of
		\begin{align*}
			(P + \mu^g I) x + q = 0 \\
			Ax = 0
		\end{align*} via projected MINRES-QLP or projected CG \cite[Theorem 4.3]{Adachi2017}\;
		Solve a quadratic equation to find $\alpha_1, \alpha_2$ such that $\norm{x_{\min} + \alpha_{1/2} z^g_2}_2 = r$\;
		\textbf{Return} $x_{\min} + \alpha_{1} z^g_2$, $x_{\min} + \alpha_{2} z^g_2$ as global solutions, state that no local-nonglobal minimizer exists\;
	}
	\caption{Calculation of global and local-nonglobal minimizers of the equality constrained trust-region subproblem \eqref{eqn:trs_equality}}
	\label{alg:trs}
\end{algorithm}

\newcontentB{
	Algorithm \ref{alg:trs} uses the results of this section (in particular Theorems \ref{thm:trs_global_min} and \ref{thm:trs_local_min}, Equation \eqref{eqn:standard_extraction_full} and the paragraph ``dealing with the hard case'' of \S \ref{subsec:trs_full}) to compute the minimizers of \eqref{eqn:trs_equality}. The only novel point is the way we detect the presence of the local-nonglobal minimisers, which we proceed to show to be valid.

	If the TRS is in the hard case, then the algorithm should only return global minimiser(s). Indeed, this is the case as
	\begin{enumerate}
		\item If $z_1^g=0$, then the algorithm terminates at line 16, according to the remarks of the paragraph ``Dealing with the hard case'' of \S \ref{subsec:trs_full}.
		\item If $z_1^g\neq0$, then there must exist another eigenvector associated with $\mu^g$ besides $[z_1^g; \; z_2^g]$. This is because $\mu^g = -\lambda_1$ thus according to Proposition \ref{prop:spectrum_new} of \S\ref{sec:trs_proof} there exists a vector $u \in \mathcal{N}(P - \lambda_1 I)$ that is orthogonal to $q$ which makes $(\mu^g, [0;\; u])$ an eigenpair of $M$ due to \eqref{eqn:eigenvalue_problem}. \newline Thus $\mu^g=\mu^l$, which implies that $\mu^l$ is not simple. As a result the algorithm will only return a global minimizer using \eqref{eqn:standard_extraction} at line 11.
	\end{enumerate}

	If the TRS is not in the hard case, then we must have $z_1^g \neq 0$ (see remarks of paragraph ``Dealing with the hard case'' of \S \ref{sec:trs}). Thus the algorithm will return either in lines 9 or 11, detecting correctly the presence of the local-nonglobal minimiser according to Theorem \ref{thm:trs_local_min}.

Finally, we emphasize that in the hard case the TRS can have an infinite number of solutions, but only one or two of them are returned by Algorithm \ref{alg:trs}.
}
\renewcommand{\thesubsection}{2.A}
\subsection{Proof of Theorem \ref{thm:trs_local_min}} \label{sec:trs_proof}
\renewcommand{\thesubsection}{\arabic{section}.\arabic{subsection}}
In this subsection we provide a proof of our main theoretical result, Theorem \ref{thm:trs_local_min}, which shows that \eqref{eqn:trs} has a second-order sufficient local-nonglobal minimizer if and only if it does not belong to the hard case and the second-rightmost eigenvalue of the matrix
\begin{equation*}
	M \eqdef
	\begin{bmatrix}
		-P & q q^T/r^2    \\
		I & -P
	\end{bmatrix},
\end{equation*}
with counted algebraic multiplicities, is real and simple. Furthermore, we show that if such a minimizer exists, its Lagrange multiplier $\mu^{\ell}$ is equal to this second-rightmost eigenvalue.


The proof is based on the following Lemma, which follows from \cite{Martinez1994}:
\begin{lemma} \label{lem:martinez}
	A KKT point $x^{\ell}$ of \eqref{eqn:trs} with Lagrange multiplier $\mu^{\ell}$ is a second-order sufficient local-nonglobal minimizer iff $q \not \perp w_1$, $\mu^{\ell} \in (-\lambda_1, -\lambda_2)$ and $s'(\mu^{\ell}) > 0$.
\end{lemma}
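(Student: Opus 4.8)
The plan is to recognize that a second-order sufficient local minimizer of \eqref{eqn:trs} is precisely a KKT point at which the reduced (tangent-space) Hessian of the Lagrangian is positive definite, and then to translate this condition, together with non-globality, into the three stated conditions on the secular function $s$. At a KKT point with multiplier $\mu^{\ell}$ the Lagrangian Hessian is $P + \mu^{\ell} I$, and the tangent space of the single constraint $\norm{x}_2 = r$ at $x^{\ell}$ is $x^{\ell\perp} = \{d : (x^{\ell})^T d = 0\}$. Hence the lemma reduces to deciding when $P + \mu^{\ell} I$ restricted to $x^{\ell\perp}$ is positive definite, expressed through the eigendata $(\lambda_i, w_i)$ of $P$ and the coefficients $c_i \eqdef w_i^T q$.

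First I would pin down the interval. Since $x^{\ell}$ is not global, Theorem \ref{thm:trs_global_min} gives $\mu^g \in [-\lambda_1, \infty)$ for the global multiplier, while on $(-\lambda_1, \infty)$ the sum in \eqref{eqn:s_dot} is positive, so $s' < 0$ there and $s$ has a unique root, namely $\mu^g$; thus the non-global multiplier satisfies $\mu^{\ell} < -\lambda_1$. Second-order necessity requires the reduced Hessian to be positive semidefinite, which forces $P + \mu^{\ell} I$ to have at most one negative eigenvalue, since a single linear constraint can remove only one negative direction. As $\mu^{\ell} < -\lambda_1$ already makes $\lambda_1 + \mu^{\ell} < 0$, we need $\lambda_2 + \mu^{\ell} > 0$, and ruling out the degenerate boundary $\mu^{\ell} = -\lambda_2$ places $\mu^{\ell}$ strictly between $-\lambda_2$ and $-\lambda_1$, i.e.\ in the interval written $(-\lambda_1, -\lambda_2)$ in the statement.

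With $\mu^{\ell}$ in this interval, $P + \mu^{\ell} I$ has exactly one negative eigenvalue, along $w_1$. From \eqref{eqn:trs_stationarity}, $x^{\ell} = -(P + \mu^{\ell} I)^{-1} q$ gives $w_1^T x^{\ell} = -c_1/(\lambda_1 + \mu^{\ell})$, so $w_1 \in x^{\ell\perp}$ exactly when $c_1 = 0$; since $w_1^T (P + \mu^{\ell} I) w_1 = \lambda_1 + \mu^{\ell} < 0$, positive definiteness is impossible unless $q \not\perp w_1$. For the quantitative condition I would work in the eigenbasis: with $d = \sum_i \gamma_i w_i$, the constraint $(x^{\ell})^T d = 0$ reads $a^T \gamma = 0$ where $a_i \eqdef c_i/(\lambda_i + \mu^{\ell})$, and the form is $\gamma^T D \gamma$ with $D \eqdef \mathrm{diag}(\lambda_i + \mu^{\ell})$, invertible with inertia $(n-1,1,0)$. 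Applying Haynsworth inertia additivity to the bordered matrix $\left[\begin{smallmatrix} D & a \\ a^{T} & 0 \end{smallmatrix}\right]$, whose Schur complement relative to $D$ is the scalar $-a^T D^{-1} a$, shows that the reduced Hessian on $x^{\ell\perp}$ is positive definite if and only if $a^T D^{-1} a < 0$. Reading off \eqref{eqn:s_dot}, the identity $a^T D^{-1} a = \sum_i c_i^2/(\lambda_i + \mu^{\ell})^3 = -\tfrac12 s'(\mu^{\ell})$ converts this into $s'(\mu^{\ell}) > 0$, and it simultaneously forces $c_1 \neq 0$, recovering $q \not\perp w_1$.

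The \emph{main obstacle} is the inertia bookkeeping in the previous step: one must justify carefully that the reduced Hessian $Z^T (P + \mu^{\ell} I) Z$ inherits positive definiteness exactly from the sign of the scalar Schur complement, matching dimensions and using that the constraint gradient is nonzero and $D$ nonsingular on the open interval; the algebraic identification with $-\tfrac12 s'(\mu^{\ell})$ is then routine. For the converse I would reverse these equivalences: the interval gives exactly one negative eigenvalue and $D$ nonsingular, $s'(\mu^{\ell}) > 0$ reinstates positive definiteness of the reduced Hessian (hence a strict local minimizer), and $\mu^{\ell} < -\lambda_1 \le \mu^g$ ensures the point is distinct from, and therefore not, the global minimizer. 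A shorter alternative is to invoke the characterization of \cite{Martinez1994} directly and merely re-express its derivative condition in terms of $s'$ as above.
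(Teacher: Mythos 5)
Your proposal is correct, but it takes a genuinely different route from the paper: the paper offers no argument of its own for this lemma and simply cites \cite[Theorem 3.1]{Martinez1994} --- i.e.\ exactly the ``shorter alternative'' you mention in your last sentence. What you do instead is reconstruct that characterization from first principles: identify second-order sufficiency with positive definiteness of $P+\mu^{\ell}I$ on the tangent space $\{d : (x^{\ell})^Td=0\}$, trap $\mu^{\ell}$ in $(-\lambda_2,-\lambda_1)$ by Cauchy interlacing (a codimension-one restriction removes at most one negative direction, and cannot be definite if the full form has a negative and a zero eigenvalue), and convert definiteness of the reduced Hessian into the scalar sign condition $a^TD^{-1}a=-\tfrac12 s'(\mu^{\ell})<0$ via the bordered matrix. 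This buys a self-contained proof that also exposes the logical structure of the three conditions --- in particular that $q\not\perp w_1$ is forced by the other two, since the only possibly negative term in $\sum_i (w_i^Tq)^2/(\lambda_i+\mu^{\ell})^3$ is the $i=1$ term --- at the cost of the inertia bookkeeping you flag as the main obstacle. That step is legitimate: Haynsworth gives the inertia of the bordered matrix as the inertia of $D$ plus the sign of the Schur complement, and the classical QP lemma (inertia of the bordered/KKT matrix equals inertia of the reduced Hessian plus one positive and one negative eigenvalue) closes the loop; alternatively one can argue directly that in a nonsingular form of inertia $(n-1,1)$ the form-orthogonal complement of a vector $v$ with $v^TDv<0$ is positive definite, and note that $a^\perp$ is the $D$-orthogonal complement of $v=D^{-1}a$.

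Two small points to tighten. First, your opening step excludes only $\mu^{\ell}>-\lambda_1$, by uniqueness of the root of $s$ in $(-\lambda_1,\infty)$; the boundary case $\mu^{\ell}=-\lambda_1$ cannot be handled this way, because a KKT point with that multiplier need not correspond to a root of $s$ at all (it is a pole when $q^Tw_1\neq0$). It is excluded instead because stationarity, feasibility and $P+\mu^{\ell}I\succeq0$ make such a point globally optimal, contradicting non-globality. Second, in the converse direction, ``distinct from the global minimizer'' is not quite sufficient since global minimizers need not be unique; the right statement is that every global minimizer has multiplier at least $-\lambda_1$ (Theorem \ref{thm:trs_global_min}), so a KKT point with $\mu^{\ell}<-\lambda_1$ cannot be one of them. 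Both fixes are one line. Finally, you correctly read the lemma's interval $(-\lambda_1,-\lambda_2)$ as $(-\lambda_2,-\lambda_1)$, which is the orientation used everywhere else in the paper.
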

\begin{proof}
	See \cite[Theorem 3.1]{Martinez1994}.
\qed\end{proof}

Before we use Lemma \ref{lem:martinez} we will need the following two ancillary results:
\begin{lemma} \label{lem:aux}
	For any complex number $a + bi \in \mathbb{C}$ where $a, b$ are real numbers such that $a$ is in the domain of $s$ and $b \neq 0$, we have:
	\begin{equation}
		\mathrm{Re}\left(s(a + bi)\right) < s(a).
	\end{equation}
\end{lemma}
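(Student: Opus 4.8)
The plan is to compute $\mathrm{Re}(s(a+bi))$ explicitly via the eigendecomposition and compare it term by term with $s(a)$. Recall that
\begin{equation*}
	s(\mu) = \sum_{j=1}^{n}\frac{(w_j^Tq)^2}{(\lambda_j + \mu)^2} - r^2,
\end{equation*}
so I would substitute $\mu = a + bi$ into each summand and rationalize. Writing $\lambda_j + a =: c_j$, the $j$-th term becomes $(w_j^Tq)^2/(c_j + bi)^2$, whose real part is $(w_j^Tq)^2 \,(c_j^2 - b^2)/(c_j^2 + b^2)^2$ after multiplying numerator and denominator by the conjugate $(c_j - bi)^2$. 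The constant $-r^2$ is real and contributes identically to both sides, so it cancels in the comparison. Hence the claim reduces to showing, for each $j$ with $w_j^Tq \neq 0$, that
\begin{equation*}
	\frac{c_j^2 - b^2}{(c_j^2 + b^2)^2} < \frac{1}{c_j^2},
\end{equation*}
since the right-hand side is exactly the $j$-th term of $s(a)$ (namely $(w_j^Tq)^2/c_j^2$) once the common factor $(w_j^Tq)^2 \geq 0$ is divided out.

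First I would handle the degenerate possibilities so the strict inequality is legitimate. If every $w_j^Tq$ were zero the two sides would be equal, contradicting strictness; but note $a$ lies in the domain of $s$, which rules out $a = -\lambda_j$ whenever $w_j^Tq \neq 0$, so each $c_j$ appearing with a nonzero coefficient is itself nonzero and the right-hand side $1/c_j^2$ is well defined. I should confirm that at least one $w_j^Tq$ is nonzero for the strict inequality to hold overall; this is guaranteed because otherwise $q = 0$ and $s$ would be the constant $-r^2$, a trivial case the surrounding development excludes (indeed $r > 0$). With $c_j \neq 0$ and $b \neq 0$ fixed, the per-term inequality is the heart of the matter.

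The per-term inequality follows from a direct algebraic manipulation: clearing the positive denominators $c_j^2(c_j^2+b^2)^2$ reduces it to
\begin{equation*}
	c_j^2(c_j^2 - b^2) < (c_j^2 + b^2)^2,
\end{equation*}
and expanding the right-hand side gives $c_j^4 + 2c_j^2 b^2 + b^4$, so the inequality becomes $c_j^4 - c_j^2 b^2 < c_j^4 + 2c_j^2 b^2 + b^4$, i.e.\ $0 < 3 c_j^2 b^2 + b^4 = b^2(3c_j^2 + b^2)$. Since $b \neq 0$ this last quantity is strictly positive, establishing the term-wise bound. Summing over $j$ with the nonnegative weights $(w_j^Tq)^2$ and using that at least one weight is positive yields the strict inequality $\mathrm{Re}(s(a+bi)) < s(a)$.

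I do not expect a genuine obstacle here, as the result is essentially a computation; the only points requiring care are the bookkeeping of which terms survive (those with $w_j^Tq \neq 0$) and the justification that the inequality is strict rather than weak, which hinges on $b \neq 0$ together with the existence of at least one nonzero $w_j^Tq$. The same rationalization also yields the imaginary-part formula $\mathrm{Im}(s(a+bi)) = -\sum_j 2(\lambda_j + a)b\,(w_j^Tq)^2/((\lambda_j+a)^2 + b^2)^2$ quoted in the negative-definite lemma above, so I would present both real and imaginary parts from the single conjugate-multiplication step to serve both applications at once.
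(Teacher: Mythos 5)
Your proof is correct and follows essentially the same route as the paper's: expand each summand of $s(a+bi)$ by conjugate multiplication, take real parts, cancel the common $-r^2$, and compare term by term with $s(a)$, the strictness coming from $b \neq 0$. The only differences are cosmetic---you verify the per-term inequality by clearing denominators to get $0 < b^2(3c_j^2 + b^2)$, whereas the paper just observes that the numerator shrinks while the denominator grows---and your explicit handling of the degenerate all-$w_j^Tq = 0$ (i.e.\ $q=0$) case, needed for strictness, is a detail the paper silently glosses over.
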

\begin{proof}
	Recall the definition of $s$:
	\begin{align*}
		s(a + bi) = \sum_{j=1}^{n}\frac{(w_j^T q)^2}{(\lambda_j + a + \newcontent{bi})^2} - r^2
	\end{align*}
	and define, for convenience, $\kappa_j = w_j^T q$, $j = 1 ,\dots, n$.
	Thus
	\begin{align*}
		s(a + bi)
		&= -r^2 + \sum_{j=1}^{n}\frac{\kappa_j^2}{(\lambda_j + a + bi)^2}\\
		&= -r^2 + \sum_{j=1}^{n}\kappa_j^2\frac{(\lambda_j + a)^2 - b^2}{((\lambda_j + a)^2 + b^2)^2} - \kappa_j^2\frac{2(\lambda_j + a)b}{((\lambda_j + a)^2 + b^2)^2}i, \\
	\intertext{hence}
		\mathrm{Re}(s(a + bi)) &= -r^2 + \sum_{j=1}^{n}\kappa_j^2\frac{(\lambda_j + a)^2 - b^2}{((\lambda_j + a)^2 + b^2)^2} \\
		&< -r^2 + \sum_{j=1}^{n}\kappa_j^2\frac{(\lambda_j + a)^2}{((\lambda_j + a)^2)^2},
	\end{align*}
	or, equivalently, $\mathrm{Re}(s(a + bi)) < s(a)$.
\qed\end{proof}
\begin{lemma} \label{lem:cauchy}
	If $s(x) < 0$ for some real $x$ in the domain of $s: \mathbb{C} \mapsto \mathbb{C}$ then $s$ has an equal number of poles and zeros with real parts in $[x, \infty)$.
\end{lemma}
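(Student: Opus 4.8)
The plan is to read $s$ as a rational (hence meromorphic) function of the complex variable $\mu$ and to count its zeros and poles in the closed right half-plane $H \eqdef \{z \in \mathbb{C} : \mathrm{Re}(z) \ge x\}$ by means of the argument principle. Since $s$ has only finitely many zeros and poles — writing it over the common denominator $\prod_{i}(\lambda_i + z)^2$ exhibits it as a ratio of polynomials — it suffices to work on a large half-disk $D_R \eqdef H \cap \{|z - x| \le R\}$ and let $R \to \infty$. The boundary $\partial D_R$ consists of the vertical segment $\{x + it : |t| \le R\}$ together with the semicircular arc of radius $R$ lying in $H$. The argument principle gives $N_R - P_R = \frac{1}{2\pi}\,\Delta_{\partial D_R}\arg s$, where $N_R$ and $P_R$ are the numbers of zeros and poles (with multiplicity) inside $D_R$, so the whole task reduces to showing that the image curve $s(\partial D_R)$ does not wind around the origin.

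The key observation, and the engine of the proof, is Lemma \ref{lem:aux}: for every point $x + it$ on the vertical edge with $t \neq 0$ we have $\mathrm{Re}\big(s(x + it)\big) < s(x) < 0$, while at $t = 0$ we simply have $s(x) < 0$ by hypothesis. Hence $s$ maps the entire vertical edge into the open left half-plane $\{\mathrm{Re}(w) < 0\}$; in particular $s$ has no zeros on this edge. I would also note here that the poles of $s$ are exactly the real points $-\lambda_i$ (with $w_i^T q \neq 0$), none of which lies on the line $\mathrm{Re}(z) = x$ because $x$ is assumed to be in the domain of $s$, so no pole sits on the contour either. On the arc I would use that $s(z) \to -r^2$ as $|z| \to \infty$; thus for all sufficiently large $R$ the arc image lies in the open disk $\{|w + r^2| < r^2\}$, which is itself contained in the open left half-plane and avoids the origin. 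Consequently, for large $R$ the full image $s(\partial D_R)$ lies in $\{\mathrm{Re}(w) < 0\}$.

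Since $s(\partial D_R)$ is a closed curve contained in the simply connected region $\{\mathrm{Re}(w) < 0\}$, which does not contain the origin, its winding number about $0$ vanishes, so $\Delta_{\partial D_R}\arg s = 0$ and therefore $N_R = P_R$. Letting $R \to \infty$, and recalling that no zero or pole has real part exactly $x$, the counts $N_R$ and $P_R$ stabilize to the numbers of zeros and poles of $s$ with real part in $[x, \infty)$, which are therefore equal. The main obstacle I anticipate is the bookkeeping at the boundary and at infinity — verifying rigorously that for large $R$ no zero or pole lands on $\partial D_R$ and that the arc contributes nothing to the change of argument — rather than any substantive analytic difficulty, since Lemma \ref{lem:aux} already pins the vertical edge firmly inside the left half-plane.
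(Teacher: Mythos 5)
Your proof is correct and follows essentially the same route as the paper's: an application of the argument principle to the boundary of the half-plane $\{\mathrm{Re}(z) \geq x\}$, using Lemma \ref{lem:aux} to push the image of the vertical edge into the open left half-plane and the limit $s(z) \to -r^2$ to handle the arc at infinity. Your finite-radius half-disk $D_R$ with $R \to \infty$ is merely a more careful formalization of the paper's ``half-circle with infinite radius,'' not a different argument.
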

\begin{proof}
	We will prove this result with the argument principle. We propose encircling the half-space
	\begin{equation*}
		S(x) \eqdef \set{c \in \mathbb{C}}{c = a + bi, \text{ where } a \geq x, b \in \mathbb{R}}
	\end{equation*}
	with the contour of Figure \ref{fig:encirclement} and show that it maps to a contour that lies strictly in the left half plane. The suggested closed contour $C$ intersects the real axis at $x$ and at $\infty$. It consists of two segments: $C_1 = \{\omega + \beta i \mid \beta \in \mathbb{R} \}$, i.e.\ a segment that is parallel to the imaginary axis, and $C_2$, a half-circle with infinite radius.
\begin{figure}[t]
	\centering
	\includegraphics[width=.4\textwidth]{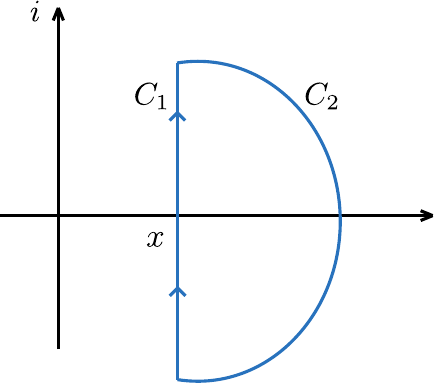}
	\caption{Suggested closed contour for encircling $S(x)$.}
	\label{fig:encirclement}
\end{figure}

\newcontent{By the argument principle~\cite{Ahlfors1966}
we have $\frac{1}{2\pi i}\int_C\frac{s'(z)}{s(z)}dz=Z-P$, where $Z,P$
are the number of roots and poles of $s$ in the domain enclosed by $C$. It follows that the number of poles in $S$ is equal to the number of zeros in $S$ if $\int_C\frac{s'(z)}{s(z)}dz=0$, which holds if}
there are no poles or zeros of $s$ \newcontent{on} $C$ and $s(C)$ does not enclose the origin. By examining \eqref{eqn:secular} we conclude that $s$ has only real, finite poles. Thus, the contour $C$ does not intersect any pole of $s$. It remains to show that there exist no zeros of $s$ \newcontent{on} $C$ and $s(C)$ does not enclose the origin. We show both by proving that every point in $s(C) = s(C_1) \cup s(C_2)$ has negative real part. For $s(C_2)$, it is easy to see that, since $s(C_2) = \{ -r^2\}$ as $\lim_{|w| \to \infty} s(w) = -r^2$. Regarding $s(C_1)$ note that $s(x) < 0$, and thus Lemma \ref{lem:aux} gives $\mathrm{Re}(s(x + \beta i)) \leq s(x) < 0 \; \forall \beta \in \mathbb{R}$, i.e.,\ any point in $s(C_1)$ has negative real part.
\qed\end{proof}
We will now connect the second-rightmost root of the secular equation $s$ with the results of Lemma \ref{lem:martinez}. Note that, due to Lemma \ref{lem:aux}, if $s$ has  a real root $\mu$ then it cannot also have \newcontent{an unreal} root with real part $\mu$ and vice versa. Thus, the concept of ``the second-rightmost root of $s$'' is well defined as long as we assume (or are prepared to check) that this root is real.
\begin{lemma} \label{lem:argument_principle} \hfill
	\begin{enumerate}[(i)]
		\item Suppose that $q \not \perp w_1$ and $\mu \in (-\lambda_2, -\lambda_1)$ is \newcontent{a} simple root of $s$ with $s'(\mu) > 0$. Then, $\mu$ is the second-rightmost root of $s$ in $\mathbb{C}$.
		\item Suppose that $q \not \perp w_1, w_2$, $\lambda_1 \neq \lambda_2$, $\mu \in \mathbb{R}$ is the second-rightmost root of $s$ in $\mathbb{C}$ and $\mu$ is simple. Then, $\mu \geq -\lambda_2$.
	\end{enumerate}
\end{lemma}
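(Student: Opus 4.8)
The plan is to prove both parts by feeding Lemma \ref{lem:cauchy} a vertical line placed just to one side of the candidate root $\mu$, where $s$ is negative, so that the closed right half-plane it bounds captures exactly the roots whose real part is (essentially) at least $\mu$; the pole--zero balance of Lemma \ref{lem:cauchy} then turns a count of poles into a count of roots. Two elementary observations drive the bookkeeping. First, the poles of $s$ are the real points $-\lambda_j$ with $w_j^Tq\neq0$, each of order twice the multiplicity of $\lambda_j$; since $q\not\perp w_1$ and $\lambda_1<\lambda_2$ (forced in part (i) by $\mu\in(-\lambda_2,-\lambda_1)$ and assumed in part (ii)), the point $-\lambda_1$ is a pole of order \emph{exactly} two, and, when also $q\not\perp w_2$, the distinct point $-\lambda_2$ is a pole of order at least two. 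Second, because $q\not\perp w_1$ we are outside the hard case, so by Theorem \ref{thm:trs_global_min} the rightmost root $\mu^g$ is real, satisfies $\mu^g>-\lambda_1$, and is simple, as $s$ is strictly decreasing on $(-\lambda_1,\infty)$.

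\textbf{Part (i).} Since $\mu\in(-\lambda_2,-\lambda_1)$ is a simple root with $s'(\mu)>0$, I would note $s(\mu-\epsilon)<0$ for all small $\epsilon>0$ and fix such an $\epsilon$ small enough that $x:=\mu-\epsilon$ still lies in $(-\lambda_2,-\lambda_1)$. The only pole of $s$ in $[x,\infty)$ is then $-\lambda_1$, of order two, so Lemma \ref{lem:cauchy} yields exactly two zeros of $s$ (with multiplicity) with real part $\ge x$. But $\mu$ and $\mu^g$ are two such zeros, each simple and each with real part $\ge x$; hence they are the only ones, and every remaining root has real part strictly below $x<\mu$. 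As $\mu^g>\mu$, this exhibits $\mu$ as the second-rightmost root of $s$.

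\textbf{Part (ii).} I would argue by contradiction, assuming the real simple second-rightmost root satisfies $\mu<-\lambda_2$. Because $\mu$ is simple and second-rightmost and $\mu^g$ is the simple rightmost root, while Lemma \ref{lem:aux} forbids any complex root from sharing the real part of $\mu$, the point $\mu^g$ is the unique root with real part exceeding $\mu$. Simplicity of $\mu$ gives $s'(\mu)\neq0$, so $s<0$ just to one side of $\mu$; I would choose $x\in\{\mu-\epsilon,\mu+\epsilon\}$ with $s(x)<0$, taking $\epsilon>0$ small enough that the only roots with real part in $[x,\infty)$ lie among $\{\mu,\mu^g\}$, so that $\{\mathrm{Re}\,c\ge x\}$ contains at most two zeros. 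On the other hand, since $\mu<-\lambda_2<-\lambda_1$, both distinct poles $-\lambda_2$ and $-\lambda_1$ lie in $[x,\infty)$, contributing at least $2+2=4$ poles counted with multiplicity. This violates Lemma \ref{lem:cauchy}, forcing $\mu\ge-\lambda_2$.

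\textbf{Main obstacle.} I expect the delicate part to be the placement of the vertical line together with the accounting of multiplicities: one must guarantee that $x$ sits strictly between $-\lambda_2$ and $\mu$ in part (i), and strictly separates $\mu$ from the nearest pole or root to its left in part (ii), so that the half-plane sees precisely the intended poles and exactly the roots $\mu,\mu^g$. This is exactly where the hypotheses are consumed: simplicity of $\mu$ supplies a side on which $s<0$, while $\lambda_1\neq\lambda_2$ with $q\not\perp w_1,w_2$ guarantees two distinct double poles to the right of $\mu$. The only other point needing care is confirming that $\mu^g$ is a genuine simple root rather than a pole, i.e.\ that $q\not\perp w_1$ indeed places us outside the hard case.
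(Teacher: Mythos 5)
Your proposal is correct and follows essentially the same route as the paper: both parts rest on Lemma \ref{lem:cauchy} applied to the half-plane bounded by a vertical line placed just to the side of $\mu$ where $s<0$, together with Lemma \ref{lem:aux} and the pole count at $-\lambda_1$ (and $-\lambda_2$). The only cosmetic difference is in part (ii), where you reach the contradiction by directly comparing ``at most two zeros'' against ``at least four poles,'' whereas the paper first deduces four zeros in the half-plane and then argues the extra ones would have to lie to the right of $\mu$; the two bookkeeping orders are logically equivalent.
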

\begin{proof}
	Note that in both cases $q \not \perp w_1$, thus \eqref{eqn:trs} does not belong to the hard case \cite{Adachi2017}. Hence $s$ has a real root in $(-\lambda_1, \infty)$ (Theorem \ref{thm:trs_global_min}) which we denote by $\mu^g$. Moreover, $\mu^g$ is simple because $s'(\mu^g) = -2\sum_{j=1}^{n}(w_j^Tq)^2/(\lambda_j + \mu^g)^3$ is negative since $\mu^g > -\lambda_1 > \dots > -\lambda_n$.

	In the subsequent analysis, we will frequently refer to the poles of $s$ which are
	\begin{equation*}
		\set{-\lambda_i}{i = 1, \dots, n \text{ such that } q^Tw_i \neq 0},
	\end{equation*}
	and have double multiplicity, and the following parametric set
	\begin{equation*}
		S(x) \eqdef \set{c \in \mathbb{C}}{c = a + bi, \text{ where } a \geq x, b \in \mathbb{R}}
	\end{equation*}
	defined for any $x \in \mathbb{R}$.

	We will begin with claim (i). Since $\mu < -\lambda_1 < \mu^g$ we conclude that $\mu$ is not the rightmost root of $s$. In order to show that it is the \emph{second} rightmost, it suffices to show that there are \newcontentB{at most} two roots of $s$ in $S(\mu)$.

	Since $s(\mu) = 0$ and $s'(\mu) > 0$, we have $s(\mu - \epsilon) < 0$ for a sufficiently small $\epsilon > 0$. According to Lemma \ref{lem:cauchy}, $s$ has equal number of zeros and poles in $S(\mu - \epsilon)$. By assumption $q \not \perp w_1$ and $\mu > -\lambda_2$, thus $-\lambda_1$ is the only (double) pole of $s$ in $S(\mu - \epsilon)$. Hence, the number of zeros in $ S(\mu - \epsilon) \newcontentB{\supset} S(\mu)$ must be two. This concludes the proof of claim (i).

	We will now prove claim (ii). Assume the contrary, i.e.\ that $\mu < -\lambda_2$. Consider first the case where $s'(\mu) > 0$. Since $s(\mu) = 0$ we can choose a sufficiently small $\epsilon > 0$ such that $s(\mu - \epsilon) < 0$. Thus $s$ has equal number of zeros and poles in $S(\mu - \epsilon)$  (Lemma \ref{lem:cauchy}). By assumption $\mu < -\lambda_2$ and $q \not \perp w_1, w_2$, and thus the double poles $-\lambda_1$ and $-\lambda_2$ are in $S(\mu - \epsilon)$. Thus $s$ has (at least) four zeros in $S(\mu - \epsilon)$, and since $\epsilon$ can be arbitrarily small, the same holds for $S(\mu)$.

	We conclude that there should be extra roots for $s$ in $S(\mu)$, besides the simple roots $\mu^g$ and $\mu$. Moreover, these extra roots cannot have real part equal to $\mu$ as according to Lemma \ref{lem:aux} $\newcontent{\mathrm{Re}(s(\mu + b i))} < s(\mu) = 0$ for any real $b \neq 0$. Thus their real parts must be in $(\mu, \infty)$. This means that $\mu$ is not the second-rightmost root of $s$, which is a contradiction.

	Finally, if $s'(\mu) < 0$, then, using the same arguments as before, we conclude that there exist four roots in $S(\mu + \epsilon)$ for any sufficiently small $\epsilon > 0$. Thus, there exist extra roots (besides the simple root $\mu^g$) with real part in $(\mu, \infty)$, which, again, contradicts the assumption that $\mu$ is the second-rightmost root of $s$.
\qed\end{proof}
Lemma \ref{lem:argument_principle}, in combination with Lemma \ref{lem:martinez}, provides an ``almost'' iff relationship between the second-rightmost root of $s$ and the local-nonglobal minimizer of \eqref{eqn:trs}. However, the quantity of primary interest is the second-rightmost eigenvalue of $M$ instead of the second-rightmost root of $s$. The following result characterizes the spectrum of $M$ and its tight connection with the roots of $s$:
\newcontent{
\begin{proposition} \label{prop:spectrum_new}
The spectrum of $M$ consists of:
\begin{itemize}
	\item the roots of $s$ with matched algebraic multiplicity; and
	\item all of the eigenvalues $-\lambda_i$ of $-P$ that are non-simple \newcontentB{for $-P$} or have $w_i$ (the $i-$th eigenvector of $P$) orthogonal to $q$.
\end{itemize}
\newcontentB{Furthermore, every $-\lambda_i$ that is in the spectrum of $M$ is non-simple for $M$.}
\end{proposition}
\begin{proof}
	We will prove the result by deriving a closed form expression for the characteristic polynomial of $M$. Since $-I$ and  $P + \mu I$ commute, we have \cite[Theorem 3]{Silvester2000}:
\begin{align*}
	\det(\mu I - M) = &
	\det\left(
		\begin{bmatrix}
			P + \mu I & -qq^T/r^2 \\
			-I & P + \mu I
		\end{bmatrix}
	\right)
	= \det( (P + \mu I)^2 - q q^T/r^2) \\
	\intertext{or, using the matrix determinant lemma,}
	\det(\mu I - M) &= -r^{-2}\left(q^T (P + \mu I)^{-2}q - r^2 \right) \det((P + \mu I)^2)
\end{align*}
for all $\mu \neq -\lambda_1, \dots -\lambda_n$. Thus, recalling \eqref{eqn:stationarity_trs}-\eqref{eqn:secular} we have
\begin{align}
	det(\mu I - M) &=
	-r^{-2} \left(\sum_{i=1}^{n}\frac{(w_i^T q)^2}{(\lambda_i + \mu)^2} - r^2\right)\prod_{i = 1}^n(\lambda_i + \mu)^2
	\label{eqn:characteristic_polynomial_prequel}
	\\
	&= -r^{-2}\sum_{i = 1}^{n} (w_i^T q)^2 \prod_{j \neq i} (\lambda_j + \mu)^2 + \prod_{i = 1}^{n} (\lambda_i + \mu)^{\newcontentB{2}}, \label{eqn:characteristic_polynomial}
\end{align}
for all $\mu \neq -\lambda_1, \dots -\lambda_n$. Thus, \eqref{eqn:characteristic_polynomial} and the characteristic polynomial of $M$ coincide in all $\mathbb{C}$ except perhaps on $2n$ points. It follows from continuity arguments that \eqref{eqn:characteristic_polynomial} is in fact the characteristic polynomial of $M$.

By examining \eqref{eqn:characteristic_polynomial_prequel} (2nd leftmost) and \eqref{eqn:characteristic_polynomial}, we conclude that the eigenvalues of $M$ include all the roots of $s$ with matched algebraic multiplicity. The secular equation $s$ has $2n$ roots whenever the eigenvalues of $P$ are distinct and $w_i^T q \neq 0$ for all $i = 1, \dots, n$. Otherwise, i.e. for every $i$ with $\lambda_i$ non-simple or $w_i^T q = 0$, $M \in \mathbb{R}^{2n \times 2n}$ has extra \newcontentB{non-simple} eigenvalues at these $-\lambda_i$, which constitute the only differences between the spectrum of $M$ and the roots of~$s$.
\qed
\end{proof}
}
\newcontentB{
The following result follows directly from the above Proposition and will be useful for the rest of this section.
\begin{corollary} \label{cor}
	No simple eigenvalue of $M$ is in the spectrum of $-P$.
\end{corollary}
}

We are now ready to prove the main result of this section\newcontent{, Theorem 2. The proof will occupy the rest of this subsection.}
Note, again, that, due to Lemma \ref{lem:aux} and Proposition \ref{prop:spectrum_new}, if $M$ has a real eigenvalue $\mu$ then it cannot also have \newcontent{an unreal} eigenvalue with real part $\mu$ and vice versa. Thus, the concept of ``the second-rightmost eigenvalue of $M$'' is well defined as long as we assume (or are prepared to check) that this eigenvalue is real.

\underline{``only-if''}
In this case we know that $\mu^{\ell}$ is the Lagrange multiplier of a second-order sufficient local-nonglobal minimizer and we have to show that
\begin{enumerate}[(i)]
	\item \eqref{eqn:trs} is not in the hard case.
	\item $\mu^{\ell}$ is a simple eigenvalue of $M$.
	\item $\mu^{\ell}$ is the second-rightmost eigenvalue of $M$.
\end{enumerate}
Points \newcontentB{(i), and (ii)} follow directly from Lemma \ref{lem:martinez} and Proposition \ref{prop:spectrum_new} (note that $q \not \perp w_1$ implies that \eqref{eqn:trs} is not in the hard case \cite{Adachi2017}). In order to prove \newcontentB{(iii)} we will first show that the spectrum (counted with algebraic multiplicities) of $M$ with real part larger than $-\lambda_2$ coincides with the roots of $s$ in the same region. \newcontent{This follows from Proposition \ref{prop:spectrum_new}, because $\lambda_1$ is simple (due to $\mu^{\ell} \in (-\lambda_2, -\lambda_1)$) and  $q^T w_1 \neq 0$. Point \newcontentB{(iii)} then follows from Lemma \ref{lem:argument_principle}(i) which shows that $\mu^{\ell}$ is the second-rightmost root of $s$, and thus the second-rightmost eigenvalue of $M$.}

\underline{``if''} In this case, we know that \eqref{eqn:trs} is not in the hard case and that $\mu^{\ell}$ is the second-rightmost eigenvalue of $M$ which is real and simple, and we want to show that $\mu^{\ell}$ is the Lagrange multiplier of the second-order sufficient local-nonglobal minimizer of \eqref{eqn:trs}.

Note that, by assumption, $\mu^{\ell}$ is in the spectrum of $M$ \newcontentB{but, due to Corollary \ref{cor}, not in that of $-P$}. Thus $\mu^{\ell}$ is a root of $s$ due to \newcontent{Proposition \ref{prop:spectrum_new}}. As a result, it suffices to show that $\mu^{\ell}$ is in $(-\lambda_2, -\lambda_1)$ and $s'(\mu^{\ell}) > 0$ (Lemma \ref{lem:martinez}).

We will first show that $\mu^{\ell} \in (-\lambda_2, -\lambda_1)$. Since we are not in the hard case, $M$ has a real eigenvalue in $(-\lambda_1, \infty)$ (Theorem \ref{thm:trs_global_min}) which is simple and unique in $(-\lambda_1, \infty)$ due to \newcontent{Proposition \ref{prop:spectrum_new}} and the fact that $s'(\mu) = -2\sum_{j=1}^{n}(w_j^Tq)^2/(\lambda_j + \mu)^3$ is negative in that region. Thus, $-\lambda_1$ is not in the spectrum of $M$ as otherwise it would be its second-rightmost eigenvalue (which by assumption is real) and it \newcontentB{would imply that $\mu^{\ell}$ is in the spectrum of $-P$, which we have already excluded}. Thus, $\mu^{\ell} \in (-\infty, -\lambda_1)$\newcontentB{, and $\mu^{\ell} \neq -\lambda_2$.}

It remains to show that $\mu^{\ell} \geq -\lambda_2$. \newcontent{If $q^T w_2 = 0$ then, $(-\lambda_2, [0; w_2])$ is an eigenpair of $M$, thus we must have $\mu^{\ell} > -\lambda_2$ so as to avoid $\mu^{\ell}$ (the second-rightmost eigenvalue of $M$) being in the spectrum of $-P$. Consider now the case where $q^T w_2 \neq 0$. Note that $\lambda_1 \neq \lambda_2$ and $q \not \perp w_1$ as otherwise $-\lambda_1$ is in the spectrum of $M$ (Proposition \ref{prop:spectrum_new}), which, according to the paragraph above is a contradiction. Thus, $\lambda_1 \neq \lambda_2$ and $q \not \perp w_1, w_2$ resulting in $\mu^{\ell} \geq -\lambda_2$ due to Lemma \ref{lem:argument_principle}(ii).}

Finally we show that $s'(\mu^{\ell}) > 0$, thereby concluding the proof. Note that $s'(\mu^{\ell}) \neq 0$ since by assumption $\mu^{\ell}$ is a simple root of $s$. Assume the contrary, i.e.\ $s'(\mu^{\ell}) < 0$. Since $s$ is convex in $(-\lambda_2, -\lambda_1)$ \cite[(3.12)]{Martinez1994} and $\lim_{\mu \to -\lambda_1}s(\mu) = \infty$ we conclude that there \newcontentB{must exist another root} of $s$ in $(\mu^{\ell}, -\lambda_1)$. This contradicts the assumption that $\mu^{\ell}$ is the second-rightmost root of $s$.

\section{An active-set algorithm for \eqref{eqn:main_problem}} \label{sec:algorithm}
We are now in a position to present the main contribution of this paper, an active-set algorithm for solving \eqref{eqn:main_problem}. We will first present an algorithm for the special that $r_{\min} = r_{\max} \eqdef r$ and then describe how to generalize for any $r_{\min}, r_{\max}$.
\subsection{Solving \eqref{eqn:main_problem} when $r_{\min} = r_{\max} \eqdef r$} \label{sec:active_set}
In this section we introduce a primal active-set approach for the optimization of \eqref{eqn:main_problem} when $r_{\min} = r_{\max} \eqdef r$. It will be useful in the subsequent analysis to recall the Karush-Kuhn-Tucker (KKT) conditions of \eqref{eqn:main_problem}, which are
\begin{align}
	\label{eqn:KKT1}
	\nabla f(x) + A^T\kappa + \mu x &= 0 \\
	\label{eqn:KKT2}
	\kappa                     & \geq 0                                                 \\
	\label{eqn:KKT3}
	\kappa_i( a_i^T x - b_i)   & = 0, \quad i = 1, \dots, m  \\
	\label{eqn:KKT4}
	A x                         & \leq b, \quad  \quad x^Tx = r^2.
\end{align}

As is typical from a primal active-set approach, our algorithm starts from a given feasible point of \eqref{eqn:main_problem} and generates iterates that remain feasible for \eqref{eqn:main_problem} and have non-increasing objective values. At every iteration we treat a subset of the inequality constraints $Ax \leq b$ as equalities. We refer to this subset as the \emph{working set} $\mathcal{W}_k$ and define
\[
	\bar A_k = [a_i]_{i \in \mathcal{W}_k}, \quad \bar b_k = [b_i]_{i \in \mathcal{W}_k},
\]
where $[\cdot]$ denotes vertical (row-wise) concatenation.  To simplify the analysis, we will assume that  one of the simplest and most common constraint qualification holds for every iterate of the algorithm:
\begin{assumption} Linear Independence Constraint Qualification [LICQ] \\
  \label{ass:licq}
	The LICQ holds for every iterate of the suggested Algorithm, i.e.\ $\begin{bmatrix} \bar A_k \\ \bar x_k^T \end{bmatrix}$ is full row rank.
\end{assumption}

We will first give a brief, schematic description of our algorithm. At every iteration $k$ of our active-set based minimization procedure, we use Algorithm \ref{alg:trs} to compute minimizers of the subproblem
\begin{equation} \label{eqn:working_set_problem} \tag{SP}
	\begin{array}{ll}
		\text{minimize}   & \frac{1}{2}x^T P x + q^T x \\
		\text{subject to} & \norm{x}_2 = r    \\
											& \bar A_k x = \bar b_k.
	\end{array}
\end{equation}
We then attempt to move towards those minimizers in that hope that we either (i) ``hit'' a new constraint or (ii) set $x_{k + 1}$ to a \newcontentB{(potentially local)} minimizer of \eqref{eqn:working_set_problem}. Unfortunately due to the nonconvexity of the problem, we might achieve neither (i) nor (ii) by simply moving towards the minimizer of \eqref{eqn:working_set_problem}\newcontent{, as the objective of \eqref{eqn:main_problem} is not guaranteed to be non-increasing along these moves}. In that case, which we will show can happen only when \eqref{eqn:working_set_problem} has a unique minimizer, we perform a projected gradient descent followed (if necessary) by a move towards the global minimizer of \eqref{eqn:working_set_problem} that is guaranteed to achieve either (i) or (ii)\footnote{Technically, in this case $x_{k+1}$ will be a second order necessary stationary point of \eqref{eqn:working_set_problem}.}. Finally, at the end of each iteration, we check for termination and update the working set.

Our overall active-set procedure is shown in Algorithm \ref{alg:active_set}. We now proceed in the following subsections that describe in detail each of the Algorithm's steps. For the rest of this section, we will say that a point is ``feasible'' when it is in the feasible region of \eqref{eqn:main_problem}. Also, given a point $x$ we will call the projection of $\nabla f(x)$ to the nullspace of constraint normals of \eqref{eqn:working_set_problem}, $\begin{bmatrix} \bar A_k \\ x^T \end{bmatrix}$, as ``projected gradient of $f(\cdot)$ at $x$''.

\begin{algorithm}
	\label{alg:active_set}
	\caption{Active Set method for solving \eqref{eqn:main_problem} when $r_{\min} = r_{\max} \eqdef r$}
	Assume a feasible starting point $x_0$ \\
	Set $W_0$ to be a subset of the active constraints in $x_0$ \\
	\For{$k=\newcontent{0,} 1, 2, \dots$}{
		Solve \eqref{eqn:working_set_problem} with Algorithm \ref{alg:trs}\;
		\uIf{Algorithm \ref{alg:trs} returned two distinct minimizers}{
			$x_{k+1} \leftarrow$ solution of the 2D subproblem defined by $x_k$ and the two returned minimizers.
		}
		\Else{
			$x_{k+1} \leftarrow$ solution of the 2D subproblem defined by $x_k$, the returned (global) minimizer and the projected gradient of $f$ at $x_k$.
		}
		\uIf{$x_{k+1}$ is not a minimizer of \eqref{eqn:working_set_problem} and no new constraint was ``hit''}{
			Run projected gradient descent starting from $x_{k+1}$ until a new constraint is ``hit'' or until convergence\newcontent{, and store the resulting point in $x_{k+1}$}\;
			\uIf{PGD converged to a feasible \newcontent{$x_{k+1}$} of indefinite projected Hessian}{
					Compute a suitable limiting direction $d$ along which $x_{k+1}$ is a local maximum\;
					$x_{k+1} \leftarrow$ solution of the $2D$ subproblem on the plane defined by \newcontent{$x_{k + 1}$}, a global minimizer of \eqref{eqn:working_set_problem} and $d$\;
			}
		}
		\uIf{a new constraint was ``hit''} {
			Obtain $W_{k+1}$ by adding one of the blocking constraints to $W_k$\;
		}
		\Else{
			$W_{k+1} \leftarrow \mathrm{check\_multipliers}(x_{k+1}, W_k)$\;
		}
	}
	\BlankLine
	\SetKwFunction{Facq}{check\_multipliers}
	\Fn{\Facq{$x, W$}}{
		Compute the Lagrange multipliers $(\kappa, \mu)$ at $x$ that satisfy \eqref{eqn:KKT1}-\eqref{eqn:KKT4} with $A = [a_i]_{i \in \mathcal{W}}$\;
		\uIf{$\kappa \geq 0$}{
			\textbf{Terminate Algorithm} with \newcontent{$x$ as the returned solution}
			}\Else{
			$j \leftarrow \argmin_{j \in W} \kappa$\;
			$W \leftarrow W \setminus \{j\}$\;
		}
	}
\end{algorithm}

\subsubsection{Move towards the minimizers of \eqref{eqn:working_set_problem}}
 Notice that Algorithm \ref{alg:trs} always returns a global minimizer $x^g$ of \eqref{eqn:working_set_problem} and possibly a second minimizer $x^*$.

Let us consider first the case where two distinct minimizers are returned which is treated in \underline{lines 5-6} of Algorithm \ref{alg:active_set}. In this case it is always possible to either set $x_{k+1}$ to a minimizer of \eqref{eqn:working_set_problem} or ``hit'' a new constraint as follows. Consider the circle defined by the intersection of the sphere $\norm{x}_2 = r$ and the plane defined by $x_k$ and the two returned minimizers. When \eqref{eqn:working_set_problem} is constrained in this circle, then it can be reduced to a two-dimensional TRS. As such, we will show that it has at most 2 maximizers and 2 minimizers, except perhaps when $x_k$ is a global minimizer of \eqref{eqn:working_set_problem} in which case we can trivially set $x_{k+1} = x_{k}$.
\begin{proposition} \label{prop:2d_trs}
	A two dimensional TRS has at most 2 minimizers and 2 maximizers except when its objective is constant across its feasible region.
\end{proposition}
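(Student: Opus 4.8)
The plan is to reduce the two-dimensional TRS to a one-dimensional problem in a single angular variable, and then exploit the fact that the resulting objective is a low-degree trigonometric polynomial whose extrema can be counted directly.

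First I would parametrize the feasible circle $\norm{x}_2 = r$ in $\mathbb{R}^2$ by $x(\theta) = r(\cos\theta, \sin\theta)$ for $\theta \in [0, 2\pi)$ and substitute into the objective $\frac{1}{2}x^T P x + q^T x$. Using the double-angle identities $\cos^2\theta = (1 + \cos 2\theta)/2$, $\sin^2\theta = (1 - \cos 2\theta)/2$ and $\sin\theta\cos\theta = (\sin 2\theta)/2$, the objective becomes a function
\[
	f(\theta) = c_0 + c_1 \cos 2\theta + c_2 \sin 2\theta + c_3 \cos\theta + c_4 \sin\theta,
\]
i.e.\ a trigonometric polynomial of degree two whose coefficients are explicit in the entries of $P$ and $q$. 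In particular $c_1, c_2$ are determined by $P$ (specifically by $p_{11} - p_{22}$ and $p_{12}$) while $c_3, c_4$ are multiples of the entries of $q$; all four vanish exactly when $P$ is a multiple of the identity and $q = 0$, which is precisely the case in which $f$ is constant on the circle.

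Next I would differentiate: $f'(\theta)$ is again a trigonometric polynomial of degree two. Invoking the classical bound that a trigonometric polynomial of degree $n$, unless identically zero, has at most $2n$ zeros in a period counted with multiplicity, $f'$ has at most four zeros whenever it is not identically zero. Since $f' \equiv 0$ holds if and only if $c_1 = c_2 = c_3 = c_4 = 0$, the identically-zero case coincides exactly with the excluded case of a constant objective; outside it there are therefore at most four critical points of $f$ on the circle.

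Finally I would translate this zero count into the stated bound on minimizers and maximizers. Because $\theta \mapsto f(\theta)$ is periodic, the number of sign changes of $f'$ from $+$ to $-$ (each yielding a local maximum) equals the number from $-$ to $+$ (each yielding a local minimum); call this common value $k$. Each such sign change corresponds to a zero of $f'$ of odd multiplicity, so the $2k$ sign-change zeros are among the at most four zeros counted with multiplicity, giving $2k \le 4$ and hence $k \le 2$. Degenerate critical points, i.e.\ even-multiplicity zeros of $f'$, are inflection points that contribute neither a maximizer nor a minimizer, so they only tighten the count. The main delicacy is precisely this last step: one must bound genuine extrema rather than all stationary points, and the sign-change/odd-multiplicity argument handles this cleanly without having to rule out degenerate critical points case by case. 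This yields at most two minimizers and at most two maximizers on the circle, as claimed.
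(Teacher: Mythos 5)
Your proof is correct, and it takes a genuinely different route from the paper's. The paper leans on the TRS machinery it has already developed: away from the hard case, Martinez's theorem gives a unique global minimizer and at most one local--nonglobal minimizer; in the hard case, the minimizers form the intersection of the affine subspace $(P-\lambda_1 I)x + q = 0$ with the circle, which is one point, two points, or the whole circle (the constant-objective exception); maximizers are handled by negating the objective. Your argument is instead elementary and self-contained: the angular parametrization turns the objective into a degree-two trigonometric polynomial, the classical bound (via $z=e^{i\theta}$, a trigonometric polynomial of degree $n$ that is not identically zero has at most $2n$ zeros per period with multiplicity) caps the critical points at four, and the sign-alternation argument converts that into at most two minimizers and two maximizers simultaneously, with the degenerate even-multiplicity critical points correctly discarded as contributing no extrema. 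What the paper's route buys is brevity given its existing lemmas, and a statement whose logic mirrors the general-dimensional theory; what yours buys is independence from Martinez and the hard-case analysis, a sharper byproduct (at most four stationary points in total, counted with multiplicity), and an explicit characterization of the exceptional case as $P$ a multiple of the identity with $q=0$, which the paper only describes implicitly through the intersection being a full circle. One small point to make explicit if you write this up: since a nonconstant trigonometric polynomial is analytic, its derivative has isolated zeros, which is what licenses the claim that every local extremum is a sign-change zero of $f'$; that observation is the hinge of your final counting step.
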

\begin{proof}
	It suffices to show the result only for the minimizers, as a negation of the TRS's objective would show the same result for its maximizers. Suppose that the TRS does not belong to the hard case. Then it has a unique global minimizer and at most one local-nonglobal minimizer \cite{Martinez1994}. If it belongs to the hard case, then only global minimizers can exist. These are intersections of the affine subspace \eqref{eqn:hard_case_subspace} and the sphere \eqref{eqn:hard_case_norm}. Note that this intersection is either a distinct point, two points or a circle. In the latter case, every feasible point of \eqref{eqn:trs} is a global minimizer. This completes the proof.
\qed\end{proof}
We can identify the two minimizers of this two-dimensional TRS as $x^g$ and $x^*$. It follows that at least in one of the circular arcs connecting $x_k$ with $x^g$ and $x^*$ the objective value is always less than $f(x_k)$. Thus, by moving into that circular arc one will either end up with $x^g$ or $x^*$ or ``hit'' a new constraint.

On the other hand, if a single global minimizer $x^g$ was returned, which corresponds to \underline{lines \newcontentB{7-8}} of Algorithm \ref{alg:active_set}, then we consider the circle defined by the intersection of the sphere $\norm{x}_2 = r$ and the plane defined by $x_k$, \newcontentB{$x^g$} and the projected gradient of $f(\cdot)$ at $x_k$. Obviously, if $x^g$ is feasible for \eqref{eqn:main_problem} then we choose $x_{k+1} = \newcontentB{x^g}$, but otherwise we are not guaranteed to ``hit'' a new constraint. This is because the associated two-dimensional TRS might possess a second minimizer, which is not necessarily a stationary point of \eqref{eqn:working_set_problem}, but might be the best feasible solution on this circle.

\subsubsection{Perform projected gradient descent (if necessary)}
This subsection concerns the case where the procedure described in the previous subsection could neither set $x_{k+1}$ to a minimizer of \ref{eqn:working_set_problem} nor ``hit'' a new constraint. In this case (\underline{lines 10-14} of Algorithm \ref{alg:active_set}), we proceed by performing projected gradient descent (PGD) on \eqref{eqn:working_set_problem} starting from the current iterate of the active-set algorithm. This is guaranteed to converge to a KKT point $x^s$ with $f(x^s) \leq f(x_k)$ \cite[Theorem 4.5]{Beck2018}. Suppose that $x^s$ is not feasible for \eqref{eqn:main_problem}. Since the feasible region is a closed set, PGD exits the feasible region in finitely many steps. By stopping the projected gradient descent just before it exits the feasible region, we can find a point $x_{k+1}$ with a newly ``hit'' constraint.

Consider now the case where $x^s$ is feasible for \eqref{eqn:main_problem}. If the minimum eigenvalue $\lambda_{\min}$ of the projected Hessian of $f(\cdot)$ at $x^s$ is nonnegative (\underline{lines 12-14} of Algorithm \ref{alg:active_set}), then we set $x_{k+1} = \newcontent{x^s}$ and we proceed to the steps outlined in the next subsection. Otherwise $\lambda_{\min} < 0$, and there exists a feasible sequence $\{z_k\}$ (w.r.t. to \eqref{eqn:working_set_problem}) converging to $x^s$ with an appropriate limiting direction $d$
\begin{equation}
	d \eqdef \lim_{k \to \infty} \frac{z_k - x^s}{\norm{z_k - x^s}_2},
\end{equation}
such that
\begin{equation} \label{eqn:taylor_direction}
	f(\newcontent{z_k}) \newcontentB{=} f(x^s) + \frac{1}{2}\lambda_{\min}\norm{z_k - x^s}^2 + \smallO\left(\norm{z_k - x^s}^2\right).
\end{equation}
In practice, we can choose $d$ equal to some projected eigenvector associated with $\lambda_{\min} < 0$. Consider now the circle defined by the intersection of the sphere $\norm{x}_2 = r$ and the plane defined by $x^s, x^g$ and the limiting direction~$d$. As before, this can be reduced to a two-dimensional TRS that possesses at most 2 minimizers and 2 maximizers (Proposition \ref{prop:2d_trs}). We can identify two of them: $x^g$ which is a global minimum and $x^s$ which, according to \eqref{eqn:taylor_direction}, is a local maximum. It follows that in at least one of the two circular arcs connecting $x^s$ with $x^g$ the objective value is always less than $f(x^s)$. Thus, by moving into that circular arc, and since $x^s$ is feasible for \eqref{eqn:main_problem} and $x^g$ infeasible, we can identify a suitable feasible point $x_{k+1}$ such that \newcontent{$f(x^g) \leq f(x_{k+1}) \leq f(x^s)$} for which a new constraint, not in the current working set, becomes active.

\subsubsection{Update the working set and check for termination}
If a new constraint was identified in the steps above, then we update the working set and proceed to the next iteration (\underline{lines 15-16} of Algorithm \ref{alg:active_set}). Otherwise, $x_{k+1}$ was set to a second-order necessary stationary point of \eqref{eqn:working_set_problem} and we proceed as follows (\underline{lines 17-18} of Algorithm \ref{alg:active_set}). Stationarity of $x_{k+1}$ gives
\[
	\nabla f(x_{k+1}) + \mu x_{k+1} + \sum_{i \in \mathcal{W}_k} a_i \kappa_i = 0
\]
for some Lagrange multipliers $\mu$ and $\kappa_i \; \forall i \in \mathcal{W}_k$. Thus, $(x_{k+1}, \mu, \kappa)$ satisfy the first KKT condition \eqref{eqn:KKT1} if we define $\kappa_i = 0 \; \forall \not \in \mathcal{W}_k$. It follows from feasibility of $x_{k+1}$ and the definition of $\kappa$ that the third and fourth KKT conditions \eqref{eqn:KKT3}-\eqref{eqn:KKT4} also hold. If we also have that $\kappa_i \geq 0 \; \forall i \in \mathcal{W}_k$, then $(x_{k+1}, \mu, \kappa)$ is a KKT pair for \eqref{eqn:main_problem} and we terminate the algorithm (\underline{lines 23-24} of Algorithm \ref{alg:active_set}). If, on the other hand, $\kappa_i < 0$ for some $i \in \mathcal{W}_k$ (\underline{lines 25-27} of Algorithm \ref{alg:active_set}), then \eqref{eqn:KKT2} is not satisfied and $x_{k+1}$ is not a local minimizer for \eqref{eqn:main_problem}. In fact, the objective $f(\cdot)$ may be decreased by dropping one of the constraints corresponding to a negative Lagrange multiplier \cite[Section 12.3]{Nocedal2006}.

\subsubsection{Finite termination}
We will show that Algorithm \ref{alg:active_set} terminates in finitely many outer iterations, assuming that $x_{k+1} \neq x_k$ for every $k = 1, 2. \dots$. Indeed at every iteration the algorithm either
\begin{itemize}
  \item Moves to a stationary point of the current TRS subproblem; or
  \item Activates a new constraint.
\end{itemize}
Since there can be at most $m$ constraints in the working set it follows that $x_k$ visits a stationary point of the $k-$th TRS subproblem periodically (at least every m iterations). Furthermore, note that every TRS subproblem, resulting from a particular working set, has at most $2n$ sets of stationary points of the same objective value \cite[Theorem 4.1 and subsequent comments]{Forsythe1965}. Since there are a finite number of different working set configurations, it follows that there exist finitely many such sets. On the other hand, every constraint that is deleted from the working set has an associated negative Lagrange multiplier, thus every iterate $x_k$ is not a stationary point of the next iteration. Furthermore, note that if $x_k$ is not a stationary point for the $k+1$ subproblem then our algorithm generates $x_{k+1}$ with $f(x_{k+1}) < f(x_{k})$ unless $x_k \newcontent{=} x_{k+1}$ which is excluded by assumption. This means that once the algorithm visits one of these sets of stationary points with equal objective value, it can never visit it again. Hence, the algorithm terminates after finitely many iterations.

We believe that it is not possible to bound the number of iterations by a polynomial in $m, n$. Indeed, even finding an initial feasible point is NP-complete, as we show in \S\ref{subsec:initial_point}.

\subsubsection{Further Remarks}
Similarly to active-set algorithms for quadratic programs, we can always update the working set such that $\bar{A}_k$ is full row rank. However, LICQ might still fail to be satisfied when the gradient of the spherical constraint lies on $\mathcal{R}(\bar{A}^T_k)$. In these cases we terminate the algorithm without guarantees about local optimality.
\subsection{Solving \eqref{eqn:main_problem} for any $r_{\min}, r_{\max}$} \label{sec:main_problem}
We are now ready to present an active-set algorithm that solves \eqref{eqn:main_problem} for any $r_{\min}$ and $r_{\max}$. At every iteration of the suggested active-set algorithm, the spherical inequality constraint will either be in the working set or not. If it is, then we iterate as described in Algorithm \ref{alg:active_set}; otherwise we proceed with a generic (nonconvex) quadratic programming active-set algorithm \cite[\S 16.8]{Nocedal2006}\newcontent{, \cite{Gill1991}, \cite{Gould2002}}. We switch between the two algorithms when an iterate of the QP algorithm hits the spherical boundary or when the Lagrange Multiplier $\mu$ of the norm constraint in Algorithm \ref{alg:active_set} is negative and less than any other Lagrange Multiplier $\kappa_i$.

In the special the case where $r_{\min} = 0$, the norm constraint reduces to $\norm{x}_2 \leq r_{\max}$. In this case, when a switch from Algorithm \ref{alg:active_set} to the QP Algorithm happens, the projected Hessian of the Lagrangian of $f(\cdot)$ at $x_k$ is positive semidefinite, i.e.
\begin{equation}
	Q^T (P + \mu I)Q \succeq 0
\end{equation}
where $Q \eqdef [Q_1 \; q_2]$ is defined by the ``thin'' QR decomposition of $[\bar A_k^T \; x_k]$ and $q_2$ is an appropriate vector. Recall that for a switch to happen, we need $\mu < 0$; hence $Q^TPQ$ is positive definite. Thus, $Q_1^TPQ_1$, i.e.\ the projected Hessian of the next iteration that is to be handled by the generic QP algorithm, has at most one nonpositive eigenvalue \cite[Corollary 4.1]{Saad2011}. This means that even the popular, but less flexible, class of ``inertia controlling'' QP algorithms can be used as part of the suggested active-set algorithm for solving \eqref{eqn:main_problem} when $r_{\min} = 0$.
\subsubsection{Finding an initial point} \label{subsec:initial_point}
Algorithm \ref{alg:active_set} is a primal active-set algorithm and, as such, it requires an initial feasible point. Unfortunately, finding such a point is, in general, NP-complete as we formally establish at the end of this subsection. Nevertheless, we can find a feasible point for \eqref{eqn:main_problem} with standard tools, albeit in exponential worst-case complexity, as we proceed to show. Indeed, define the following problems:
\begin{multicols}{2}\noindent
  \begin{equation*}
		\begin{array}{ll}
			\text{maximize}   & x^T x\\
			\text{subject to} & Ax \leq b \\
											  & \norm{x}_{\infty} \leq r_{\min}
		\end{array}
	\end{equation*}\noindent
	\begin{equation*}
		\begin{array}{ll}
			\text{minimize}   & x^T x\\
			\text{subject to} & Ax \leq b
		\end{array}
	\end{equation*}
\end{multicols}
\noindent i.e.\ a convex minimization and a convex maximization problem. Denote with $x^*_{\min}$ and $x^*_{\max}$ the solutions to the convex minimization and maximization problems respectively. Then, \eqref{eqn:main_problem} is feasible iff $\norm{x^*_{\min}}_2 \leq r_{\max}$ and $\norm{x^*_{\max}}_2 \geq r_{\min}$, in which case a feasible point for \eqref{eqn:main_problem} can be found by interpolating between $x^*_{\min}$ and $x^*_{\max}$.

The convex minimization problem above can be solved in polynomial time with e.g.\ interior point or first order \cite{Stellato2017} methods. On the other hand, the convex maximization problem can have exponential worst-case complexity, but it can be solved to local or even global optimality with standard commercial solvers e.g.\ with \texttt{IBM CPLEX}.

Finally, we formally show that determining if \eqref{eqn:main_problem} is feasible is NP-complete:
\begin{proposition} \label{prop:np_feasibility}
	Determining if there is a solution to
	\begin{equation}\label{eqn:feasibility} \tag{F}
		\begin{array}{ll}
			\text{find}  & x \\
			\text{subject to} & Ax \leq b \\
				& r_{\min} \leq \norm{x}_2 \leq r_{\max} \\
		\end{array}
	\end{equation}
	is NP-complete.
\end{proposition}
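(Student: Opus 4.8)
The plan is to establish the two halves of NP-completeness separately: membership in NP, which is the easier direction, and NP-hardness, which I would obtain by a polynomial-time reduction from \textsc{Partition}. Throughout I assume without loss of generality that $r_{\min} \le r_{\max}$, since otherwise \eqref{eqn:feasibility} is trivially infeasible.

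For membership I would exploit the characterization already derived in this subsection: \eqref{eqn:feasibility} is feasible iff the polyhedron $P := \{x : Ax \le b\}$ is nonempty with $\min_{x\in P}\norm{x}_2 \le r_{\max}$ and $\sup_{x\in P}\norm{x}_2 \ge r_{\min}$; equivalently, iff the continuous map $x \mapsto \norm{x}_2$ attains a value in $[r_{\min}, r_{\max}]$ on the connected set $P$. The point is that a \emph{positive} instance always admits a short witness. Consider the compact convex body $K := P \cap \{\norm{x}_2 \le r_{\max}\}$, which is nonempty whenever \eqref{eqn:feasibility} is feasible. Since $\norm{x}_2^2$ is convex, its maximum over $K$ is attained at an extreme point $x^\star$ of $K$, and whenever \eqref{eqn:feasibility} is feasible this maximum is at least $r_{\min}$, so $x^\star$ is itself feasible. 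Such an extreme point is pinned down by a full-rank family of active constraints drawn from the rows of $Ax \le b$ together with at most one of $\norm{x}_2 = r_{\max}$; hence $x^\star$ is either a vertex of $P$ (rational, of polynomially bounded bit-size) or the solution of a linear system augmented by a single quadratic equation (algebraic of degree at most two, again of polynomial description). A nondeterministic machine can therefore guess the active set, solve for $x^\star$, and verify feasibility in polynomial time.

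For hardness I would reduce from \textsc{Partition}: given positive integers $a_1,\dots,a_n$, decide whether there is $y \in \{-1,+1\}^n$ with $\sum_{i=1}^n a_i y_i = 0$. The key gadget is that a single norm constraint can force integrality. Given a \textsc{Partition} instance I build the instance of \eqref{eqn:feasibility} in $\mathbb{R}^n$ with linear constraints $-1 \le x_i \le 1$ for all $i$ and $\sum_i a_i x_i = 0$ (the latter as the pair $\sum_i a_i x_i \le 0$ and $-\sum_i a_i x_i \le 0$), and with $r_{\min} = r_{\max} = \sqrt{n}$; the only irrational datum is $\sqrt n$, specified through the integer $r_{\min}^2 = n$ since the bounds enter only via $r_{\min}^2 \le \norm{x}_2^2 \le r_{\max}^2$. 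Correctness rests on the elementary observation that $-1 \le x_i \le 1$ gives $x_i^2 \le 1$, so $\norm{x}_2^2 = \sum_i x_i^2 \le n$ with equality \emph{if and only if} $x_i \in \{-1,+1\}$ for every $i$. If \textsc{Partition} is a yes-instance with witness $y$, then $y$ satisfies the box and linear constraints and $\norm{y}_2 = \sqrt n = r_{\min} = r_{\max}$, so $y$ is feasible; conversely, any feasible $x$ has $\norm{x}_2^2 = n$, which forces $x \in \{-1,+1\}^n$, and together with $\sum_i a_i x_i = 0$ yields a valid partition. Thus \eqref{eqn:feasibility} is feasible iff the \textsc{Partition} instance is a yes-instance, and since \textsc{Partition} is NP-complete the claim follows.

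I expect the main obstacle to be the membership argument rather than the reduction: the reduction is transparent once the forcing identity $\norm{x}_2^2 \le n$ (with equality only at $\pm 1$ vectors) is isolated, whereas membership requires care because feasible points may be irrational, so one must argue that a positive instance always possesses a witness of polynomially bounded algebraic description. The extreme-point argument above is what resolves this; an alternative would be to invoke standard bit-size bounds for the existential theory of the reals restricted to a constant number of quadratic constraints.
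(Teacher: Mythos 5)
Your NP-hardness half is correct and follows a genuinely different route from the paper. The paper never constructs a reduction itself: it splits feasibility of \eqref{eqn:feasibility} into (i) ``is there a point of the polyhedron with $\norm{x}_2 \le r_{\max}$?'', which is polynomial-time decidable, and (ii) ``is there a point with $\norm{x}_2 \ge r_{\min}$?'', and then cites \cite{Murty1987} for the NP-completeness of (ii). Your gadget --- box constraints $-1 \le x_i \le 1$, the equality $\sum_i a_i x_i = 0$ written as two inequalities, and $r_{\min} = r_{\max} = \sqrt{n}$, so that feasibility forces $\norm{x}_2^2 = \sum_i x_i^2 = n$ and hence $x \in \{-1,+1\}^n$ --- is a correct polynomial-time reduction from \textsc{Partition}; in effect you have inlined a self-contained proof of the result the paper only cites.

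The genuine gap is in the membership half. The step ``such an extreme point is pinned down by a full-rank family of active constraints'' is false whenever the maximizing extreme point $x^\star$ of $K = P \cap \{x : \norm{x}_2 \le r_{\max}\}$ lies on the sphere: by strict convexity of the ball, \emph{every} point of $P \cap \{x : \norm{x}_2 = r_{\max}\}$ is an extreme point of $K$, whether or not any row of $Ax \le b$ is active there. Concretely, for $P = [-2,2]^n$ and $r_{\min} = r_{\max} = 1$, every point of the unit sphere is extreme in $K$, no linear constraint is active, and the ``active family'' is the single equation $\norm{x}_2 = 1$, whose solution set is $(n-1)$-dimensional --- so a nondeterministic machine cannot ``guess the active set and solve for $x^\star$''. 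The failure occurs exactly when the sphere is binding, which is the essential case (your own reduction avoids it only because its feasible points happen to be vertices of the box). The repair is to certify the two conditions you state at the outset, which is precisely the paper's decomposition: decide $\min_{x \in P}\norm{x}_2 \le r_{\max}$ deterministically in polynomial time (a convex QP with rational optimizer), and certify $\sup_{x \in P}\norm{x}_2 \ge r_{\min}$ by guessing a rational point of $P$ of polynomial bit-size with norm at least $r_{\min}$; such a point always exists because the maximum of the convex function $\norm{x}_2^2$ over a polyhedron is attained at a vertex or is unbounded along an extreme ray emanating from one. Feasibility of \eqref{eqn:feasibility} then follows from the intermediate value theorem applied to $\norm{\cdot}_2$ along the segment joining the two certified points, which lies in $P$ by convexity; if an explicit witness is desired, it is the point where this rational segment crosses the sphere of radius $r_{\min}$, an algebraic point of degree two with polynomial description. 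Note finally that the paper's own membership sentence (``we can easily check whether a candidate point $x$ is feasible'') glosses over exactly the bit-size issue you were rightly worried about; your instinct was sound, but the extreme-point-of-$K$ argument does not resolve it.
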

\begin{proof}
	Determining if \eqref{eqn:feasibility} has a solution is NP since we can easily check whether a candidate point $x$ is feasible for \eqref{eqn:feasibility}. Furthermore, it can be decomposed into the following two independent problems: (i) ``is there a point in the polytope $Ax \leq b$ such that $\norm{x}_2 \leq r_{\max}?$'' and (ii) ``is there a point in the polytope $Ax \leq b$ such that $\norm{x}_2 \geq r_{\min}?$'' Problem (i) can be answered in polynomial time with e.g. interior point methods. Thus determining if \eqref{eqn:feasibility} has a solution is reducible to problem (ii) which is known to be NP-complete \cite[Problem 3]{Murty1987}.
\qed\end{proof}

\section{Applications and Experiments} \label{sec:experiments}
In this section we present numerical results of the suggested active-set algorithms on a range of numerical optimization problems. A \texttt{Julia} implementation of the algorithm, along with code for the generation of the results of this section is available at:\\
\centerline{\texttt{https://github.com/oxfordcontrol/QPnorm.jl}}\\
As described in the previous sections, the algorithm is based on a TRS solver, and a general (nonconvex) Quadratic Programming solver. We implemented these dependencies as separate packages that we also release publicly as listed below.
\begin{itemize}
	\item \texttt{TRS.jl}: A \texttt{Julia} package for the computation of global and local-nonglobal minimizers of
	\begin{equation}\label{eqn:main_problem} \tag{P}
		\begin{array}{ll}
			\text{minimize}   & \frac{1}{2}x^T P x + q^T x \\
			\text{subject to} & \norm{x} = r \quad \text{or} \quad \norm{x} \leq r \\
												& Ax = b,
		\end{array}
	\end{equation}
	essentially implementing in \texttt{Julia} the theoretical results of Section \ref{sec:trs} and \cite{Adachi2017}. Available at:\\
	\centerline{\texttt{https://github.com/oxfordcontrol/TRS.jl}}
	\item \texttt{GeneralQP.jl}: A \texttt{Julia} implementation of \cite{Gill1978}, i.e.\ an ``inertia controlling'' active-set solver for nonconvex, dense quadratic problems, with efficient and numerically stable routines for updating QR decompositions of the working set and LDLt factorizations of the projected Hessian. Available at:\\
	\centerline{\texttt{https://github.com/oxfordcontrol/GeneralQP.jl}}\\ This solver is useful as a part of the suggested algorithm for solving \eqref{eqn:main_problem} according to the remarks of \S\ref{sec:main_problem}.
\end{itemize}
For simplicity our implementations of the active-set algorithms are based on dense linear algebra that uses a QR factorization to compute/update a nullspace basis for every working set. Thus the presented results are limited to dense problems, except for \S\ref{sec:pca} where the special structure of the constraints of \eqref{eqn:scotlass_reformulated} results in trivial, sparse QR factorizations of the nullspace bases.

Before presenting the results we will discuss some practical considerations regarding the suggested active-set algorithm. The eigenproblems \eqref{eqn:eigenvalue_problem} associated with the trust-region subproblems of each working set are solved with \texttt{ARPACK} \cite{Lehoucq1998}. In the occasional cases where \texttt{ARPACK} fails, a direct eigensolver is used. Finally, before solving the subproblem \eqref{eqn:working_set_problem} of every iteration, we perform a few projected gradient steps with the hope to quickly activate a new constraint.

We now proceed with the results, starting with the simplest case of dense random problems.

\subsection{Random Dense Constant Norm QPs}
We compared the performance of our algorithm against the state-of-the-art non-linear solver \texttt{Ipopt} \cite{Wachter2006} with its default parameters. We use the \texttt{Julia} interface of \texttt{Ipopt}, \texttt{Ipopt.jl}, which exhibits negligible interface overhead times due to the excellent interfacing capabilities of \texttt{Julia} with \texttt{C++}.

We consider a set of these randomly generated problems with varying number of variables $n$. A feasible point for each problem instance is calculated for our Algorithm as described in Section \ref{subsec:initial_point}. The time required to compute the initial feasible points is included in the subsequent results.

\newcontent{Figure \ref{fig:random} (left)} shows the execution times. We observe that our algorithm is significantly faster than \texttt{Ipopt} by a factor of up to 50. Both of the solvers achieve practically identical objectives (relative difference less than $10^{-9}$) in all of the problem instances, except in two cases where there is a considerable difference due to the fact that the solvers ended up converging in different local minimizers. Finally, Figure \ref{fig:random} (right) shows the infeasibility of the returned solution, where we observe that our solver returns solutions of significantly smaller (i.e.\ better) infeasibility.
\begin{figure}
	\caption{Timing results (left) and maximum feasibility violation (right) of the solution of random problems with varying size $n$ generated with the following parameters $P =$ Symmetric(randn$(n, n)$), $q =$ randn($n$), $A =$ randn($1.5n, n$), $b =$ randn($n$) and $r = 100$. Hardware used: Intel E5-2640v3, 64GB memory. The maximum feasibility violation is defined as $\max((Ax^* - b)_1, \dots, (Ax^* - b)_m, \abs{\norm{x^*}_2^2 - r^2}, 0)$ for a solution $x^*$.}
	\label{fig:random}
	\centering
	\begin{minipage}{.49\textwidth}
	\includegraphics[width=.99\textwidth]{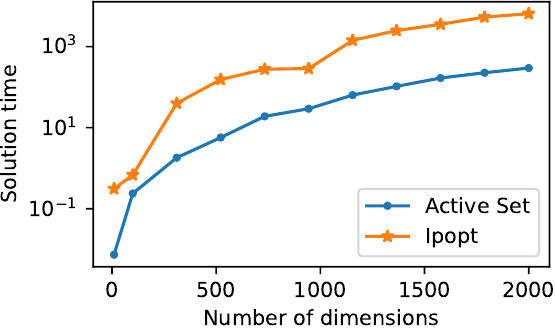}
	\end{minipage}
	\centering
	\begin{minipage}{.49\textwidth}
	\includegraphics[width=.99\textwidth]{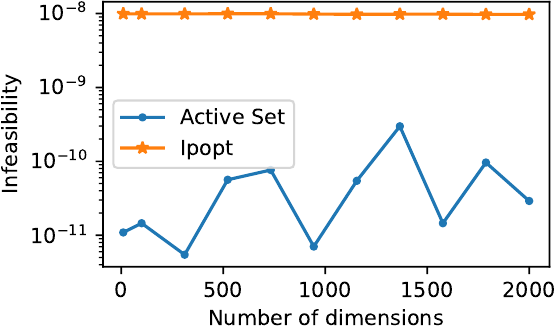}
	\end{minipage}
\end{figure}

\subsection{Computing search directions for Sequential Quadratic Programming} \label{sec:sqp}
Sequential Quadratic Programming (SQP) is a powerful and popular algorithm that aims to solve the problem
\begin{equation} \label{eqn:nlp}
	\begin{array}{ll}
		\text{minimize}   & g(x) \\
		\text{subject to} & h(x) \leq 0,
	\end{array}
\end{equation}
where $g : \mathbb{R}^n \mapsto \mathbb{R}$ and $h : \mathbb{R}^n \mapsto \mathbb{R}^m$ are general (nonlinear) functions. At every iteration, SQP minimizes the nonlinear objective over some \emph{search directions}. These search directions can, for example, be obtained by the optimization of a quadratic approximation of the original objective subject to some linear approximation of the original constraints. It is common to introduce a norm constraint $\norm{\delta x}_2 \leq r$ that ensures that the solution will be inside a confidence or \emph{trust} region of this approximation thus resulting in a problem of the form \eqref{eqn:main_problem}. This is particularly useful at points where the Hessian of the nonlinear objective is singular or indefinite as the search directions computed by the respective QPs might be unrealistically large or unbounded.

Typically, SQP algorithms solve problems simpler than \eqref{eqn:main_problem} by e.g.\ not including the inequality constraints \eqref{eqn:main_problem} in explicit form for the calculation of the search directions \cite[\S 18.2]{Nocedal2006}. We will show, however, that our solver is capable of computing search directions from \eqref{eqn:main_problem} directly, without the need for these simplifications.

We demonstrate this on all the problems of the \texttt{CUTEst} collection \cite{Gould2015} that have linear constraints and number of variables less than $2000$. For each of these problems we consider the quadratic approximation
\begin{equation}
	\label{eqn:sqp}
	\begin{array}{ll}
		\text{minimize}   & f(\delta x) \eqdef \frac{1}{2} \delta x^T \nabla^2g(x_0) \delta x + \delta x^T \nabla g(x_0) + c \\
		\text{subject to} & \norm{\delta x}_2^2 \leq 1 \\
											& \nabla h(x_0) \delta x   + h(x_0) \leq 0
	\end{array}
\end{equation}
where $x_0$ is the closest feasible point (in the 2-norm) to the initial point suggested by \texttt{CUTEst}, which we compute with \texttt{GUROBI} \cite{gurobi}\newcontent{, and $\delta x \eqdef x - x_0$}. We discard problems where \texttt{GUROBI} fails to calculate an initial feasible point. Furthermore, we do not consider problems for which \eqref{eqn:sqp} is convex, since these problems can be solved to global optimality in polynomial time with standard solvers such as \texttt{MOSEK} or \texttt{COSMO.jl} \cite{Garstka2019}. Finally, some problems in the \texttt{CUTEst} collection are parametric; if the default parameters lead to a problem with more than 2000 variables then we choose a parameter that, if possible, leads to a number of variables close to, but not exceeding, 2000. Tables \ref{tab:cutest}-\ref{tab:cutest_3} list all the 63 problems considered, along with any non-default parameters.

We compare the performance of our algorithm against \texttt{Ipopt} with its default options. For some problems, \texttt{Ipopt} terminates without indication of failure but returns a low quality solution. We consider a problem as ``solved'' using the same criteria as \cite{Ferreau2014}. That is, we require that the overall error of the KKT conditions is less than $10^{-4}$, defined as
\begin{align}
	\label{eqn:error}
	\text{error} &\eqdef \max(\epsilon_{\text{primal inf}}, \epsilon_{\text{dual inf}}, \epsilon_{\text{stationarity}}, \epsilon_{\text{compl}})
	\intertext{with}
	\nonumber
	\epsilon_{\text{primal inf}} &\eqdef \max\left(0, \delta x^T \nabla h(x_0) + h(x_0), \norm{\delta x}_2 - 1\right) \\
	\nonumber
	\epsilon_{\text{dual inf}} &\eqdef -\min(0, \kappa_1, \dots, \kappa_m, \mu) \\
	\nonumber
	\epsilon_{\text{stationarity}} &\eqdef \norm{\nabla^2g(x_0) \delta x + \nabla g(x_0) + \kappa^T \nabla h(x_0) + 2 \mu \newcontent{\delta x}}_{\infty} \\
	\nonumber
	\epsilon_{\text{compl}} &\eqdef
		\max(\{\min(\kappa_i, \newcontent{\|\nabla h(x_0) \delta x + h(x_0) \|_i)}\}, \min(\mu, \abs{\norm{\newcontent{\delta x}}_2^2 - 1})),
\end{align}
and $\kappa, \mu$ are the Lagrange multipliers corresponding to the linear inequality and norm constraints of \eqref{eqn:sqp}. Figure \ref{fig:performance_profiles} shows the performance profile for the problems considered. The performance profile suggests that our algorithm significantly outperforms \texttt{Ipopt} on this set of problems especially in reliability, in the sense that we consistently obtain high quality solutions. Note that unlike the dense implementation of our solver, \texttt{Ipopt} can exploit the sparsity of the problems efficiently. Further speedups can be brought to our algorithm with a sparse implementation which might allow its use in large scale sparse problems. Moreover, as an active-set solver, our algorithm can efficiently exploit warm starting, which might be highly desirable in SQP where repeated solution of problems of the form \eqref{eqn:sqp} is required as part of the SQP procedure for minimizing \eqref{eqn:nlp}. This is in contrast to \texttt{Ipopt} whose Interior Point nature makes warm starting very difficult to exploit.
\begin{figure}
	\centering
	\includegraphics[width=.6\textwidth]{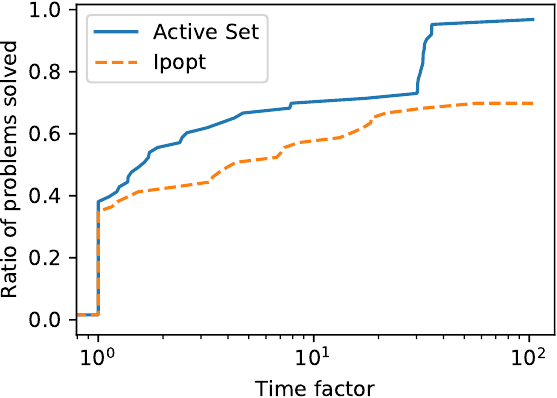}
	\caption{Performance graph of the suggested algorithm against \texttt{Ipopt} on calculating SQP search directions on all 63 problems of the \texttt{CUTEst} library listed in Tables \ref{tab:cutest}-\ref{tab:cutest_3}. Matrices are passed to \texttt{Ipopt} in a sparse format. Hardware used: Intel E5-2640v3, 64GB memory.}
	\label{fig:performance_profiles}
\end{figure}

\subsection{Sparse Principal Component Analysis} \label{sec:pca}
Principal Component Analysis (PCA) is a standard, widely used dimensionality reduction algorithm. It has applications in numerous fields, including statistics, machine learning, bioinformatics, genetics, meteorology and others. Given a $k \times n$ data matrix $D$ that consists of $k$ points of $n$ variables, PCA suggests a few linear combinations of these variables, which we call \emph{principal vectors}, that explain as much variance of the data as possible. Standard PCA amounts to the following problem
\begin{equation*}
	\begin{array}{ll}
		\text{maximize}   & x^T \Sigma x \\
		\text{subject to} & \norm{x}_2 \leq 1
	\end{array}
\end{equation*}
where $\Sigma$ is the covariance matrix of the data. We will assume for the rest of this section that the data matrix $D$ is ``centered'', i.e.\ that it has column-wise zero mean. We can then consider $\Sigma$ to be the empirical covariance matrix $\Sigma \eqdef \frac{D^T D}{k-1} \in \mathbb{S}_+^n$. The above problem is essentially an eigenvalue problem on $\Sigma$ or a singular value problem on $D$ and, as such, can be solved with standard linear algebra tools.

In general, each principal vector is a linear combination of all the variables, i.e.\ typically all components of the principal vectors are non-zero. This can pose an issue of interpretability of the reduced dimensions, as it is often desirable to express the principal vector as a combination of a few variables, especially when the variables are associated with a user interpretable meaning. To alleviate this problem, sparsity has to be enforced in the original PCA problem, resulting in a new optimization problem that aims to identify a small set of variables the linear combination of which will hopefully still be able to explain a significant proportion of the variance of the data.

Unfortunately, enforcing sparsity in PCA results in a combinatorial optimization problem. Various remarkably efficient and scalable heuristics have been suggested to avoid this intractability \cite{Journee2010}, \cite{Yuan2013}. We will focus on one of the most popular heuristics that is based on a lasso type constraint originally introduced by \cite{Jolliffe2003}, resulting in the following optimization problem:
\begin{equation} \label{eqn:scotlass}
	\begin{array}{ll}
		\text{minimize}   & f(x) \eqdef -x^T \Sigma x \\
		\text{subject to} & \norm{x}_2 \leq 1                         \\
											& \norm{x}_1 \leq \gamma,
	\end{array}
\end{equation}
where $x \in \mathbb{R}^n$ is the decision variable and $\gamma \geq 1$ is a parameter that controls the sparsity of the solution.

We will now show how Problem \eqref{eqn:scotlass} can be solved with our algorithm. Note that \eqref{eqn:scotlass} is not in the standard norm-constrained form \eqref{eqn:main_problem} that we address. However, we show in the Appendix that it is equivalent to
\begin{equation} \label{eqn:scotlass_reformulated}
	\begin{array}{ll}
		\text{minimize} & g(w) \eqdef -(w_1 - w_2)^T
			\Sigma(w_1 - w_2) \\
		\text{subject to} & \norm{w}_2^2 \leq 1 \\
											& \mathbf{1}^T w \leq \gamma \\
											& w \geq 0,
	\end{array}
\end{equation}
where $[w_1^T\; w_2^T] \eqdef w^T \in \mathbb{R}^{2n}$,
in the sense that \eqref{eqn:scotlass} and \eqref{eqn:scotlass_reformulated} have the same optimal value and every minimizer of \eqref{eqn:scotlass_reformulated} $\bar w$ defines a minimizer $\bar x = \bar w_1 - \bar w_2$ for \eqref{eqn:scotlass}. Also, the optimal $\bar w$ has the same cardinality as the respective optimal $\bar x = \bar w_1 - \bar w_2$. 

The reader might think that solving \eqref{eqn:scotlass_reformulated} with an active-set method \newcontent{requires} a solve time that is polynomial in the number of variables, thus limiting its scalability as compared to simpler gradient based methods that are scalable but might have inferior accuracy and, potentially, weaker convergence guarantees. This is not true however, since our algorithm takes advantage of the sparsity of the iterates. Indeed, excluding the lasso constraint, the working set $\mathcal{W}_k$ of Algorithm \ref{alg:active_set} applied in Problem \eqref{eqn:scotlass_reformulated} can be interpreted as the set of variables $w_i$ that are fixed to zero. Thus every subproblem can be trivially reduced to $k$ dimensions, where $k$ is the number of variables $w_i$ that are not included in the current working set. This property is particularly attractive when the user is interested in a very sparse solution and has been recognized in the literature on $\ell_1$-penalized  active-set algorithms \cite[\S 6]{Bach2012}.

We will demonstrate the scalability and flexibility of our algorithm by applying it in a large ``bag of words'' dataset from articles of the  New York Times newspaper obtained by the University of California, Irvine (UCI) Machine Learning Repository\footnote{Available at \texttt{http://archive.ics.uci.edu/ml/datasets/Bag+of+Words}}. In this dataset, every row of the associated data matrix $D$ corresponds to a word used in any of the documents and every column to a document. The entries of the matrix are the number of occurrences of a word in a document. The dataset contains $m \approx 300,000$ articles that consist of $n \approx 102,660$ words (or keywords) with $\approx 70$ million non-zeros in the dataset matrix $D$. We seek to calculate a few sparse principal vectors, each of which will hopefully correspond to a user-interpretable category, like politics, economics, the arts etc.

The usefulness of Sparse PCA on this dataset has been already demonstrated in the literature. Both \cite{Richtarik2012} and \cite{Zhang2011} have generated a set of five sparse principal vectors, each of which is a linear combination of five words. The results of \cite{Richtarik2012} are listed in Table \ref{tab:pca_richtarik}. The five resulting  principal vectors have distinct, user interpretable associated meanings that correspond to sports, economics, politics, education, and US foreign policy. The resulting words can be used to create user interpretable categories of the documents \cite{Zhang2011}. However, this interpretability is in general lost when we increase the cardinality of the sparse principal vectors, i.e.\ when we require more representative words for each category. For example, the result of applying \texttt{TPower} \cite{Yuan2013} (a state-of-the art algorithm for Sparse PCA) in the NYtimes dataset to generate a sparse principal vector that consists of 30 words, listed in Table \ref{tab:pca_tpower}, does not have a distinct associated meaning. For reference, \texttt{TPower} runs in $4.91$ seconds on a standard Laptop with an Intel i7-5557U CPU and 8GB of memory and has a resulting variance of $58.79$. Another popular algorithm, $\ell_1-$\texttt{GPower}, runs in $355.86$ seconds and results in a variance of $56.66$. The results of our  active-set algorithm are better than $\ell_1-$\texttt{GPower}\footnote{Note that $\ell_1$ penalized algorithms (such as our algorithm and $\ell_1-$\texttt{GPower}) do not allow for the specification of the desired sparsity directly. Instead, their sparsity is controlled by a penalty parameter. We find a penalty parameter that results in 30 nonzeros via means of a binary search, while exploiting warm starting of the solvers.} and comparable (slightly inferior) to \texttt{TPower}: our algorithm runs in $5.22$ seconds and has a resulting variance equal to $58.23$. However, our algorithm is more general, in the sense that it can solve any problem of the form \eqref{eqn:main_problem} which, as we proceed to show, allows for variants of \eqref{eqn:scotlass} that can generate principal vectors with more meaningful interpretation.

\begin{table}
	\caption{Results of \cite{Richtarik2012} on applying sparse PCA on the NYtimes dataset. Each column lists words that correspond to nonzero entries in the respective principal vectors. The resulting  principal vectors have clear associated meanings that correspond to sports, economics, elections, education, and the US.}
	\label{tab:pca_richtarik}
	\centering
	\begin{tabular}{lllll}
		\toprule
	Vector \#1  & Vector \#2  & Vector \#3         & Vector \#4  & Vector \#5 \\
	\midrule
	game        & companies    & campaign          & children    & attack     \\
	play        & company      & president         & program     & government    \\
	player      & million      & zzz\_al\_gore     & school      & official   \\
	season      & percent      & zzz\_bush         & student     & zzz\_u\_s    \\
	team        & stock        & zzz\_george\_bush & teacher     & zzz\_united\_states
	\end{tabular}
\end{table}

Indeed, improving interpretability of the results of Sparse PCA can be achieved by the incorporation of additional constraints in the underlying optimization problem. Unfortunately, most of the algorithms that are specific for sparse PCA do not allow for the presence of extra constraints. An exception is perhaps \texttt{GPower} of \cite{Journee2010}, which is a generic framework for minimizing a concave function $f$ over a compact subset $\mathcal{F}$. The iterates of \texttt{GPower} are generated as
\begin{equation} \label{eqn:gpower}
	\begin{array}{rll}
		x_{k+1} = &\text{argmin}   & x^T f'(x_k) \\
			&\text{subject to} & x \in \mathcal{F}
		\end{array}
\end{equation}
where $f'(\cdot)$ is a subgradient of $f(\cdot)$. \cite{Journee2010} showed that for the case of Sparse PCA of \eqref{eqn:scotlass} the above optimization problem reduces to two matrix multiplications with $D$ and $D^T$. However, in the presence of general linear inequality constraints, solving \eqref{eqn:gpower} can be at least as hard as solving a linear program (LP), which is obviously considerably more computationally demanding than two matrix multiplications.

In contrast, due to the generality of our approach, imposing any linear constraint is straightforward for our algorithm. To demonstrate this, we impose a non-negativity constraint on the elements of the principal vector to avoid cases where the principal vectors consist of elements that have opposing meanings which happens, for example, in the results of \texttt{TPower} listed in Table \ref{tab:pca_tpower}. Table \ref{tab:nonnegative_pca} lists five sparse principal vectors obtained with this approach which take 24.37 seconds to compute on a standard Laptop with an Intel i7-5557U CPU and 8GB of memory. Note that, unlike the results of Table \ref{tab:pca_tpower}, the principal vectors of Table \ref{tab:nonnegative_pca} have a clear associated meaning that can be labeled as sports, economics, politics, US foreign policy and education. These words could for example be used for organizing the documents in a user interpretable way \cite{Zhang2011}.

\begin{table}
	\sisetup{round-mode=places}
	\caption{Results of applying sparse PCA on the NYtimes dataset with \texttt{TPower}, allowing weights with mixed signs. Each column lists words that correspond to nonzero entries in the generated principal vector. Note that increasing the cardinality of the sparse principal vector has severely affected the interpretability of the principal vector as compared to the results of Table \ref{tab:pca_richtarik} that have clear associated meaning.}
	\label{tab:pca_tpower}
	\centering
	\begin{tabular}{
		l
	*{1}{S[round-precision=2]}
	l
	*{1}{S[round-precision=2]}
	}
	\toprule
	{Words} & {Weights} & {Words (continued)} & {Weights (continued)} \\
	\midrule
	teamed                    & -0.45197 & hitch     & -0.083649 \\
	gameday                   & -0.3698 & playland                  & -0.081834\\
	seasonable                & -0.34698 & 	leaguer                   & -0.07607 \\
	playful                   & -0.28105 & 	nightcap                  & -0.072675 \\
	playa                     & -0.24374 & 	percentage                & 0.27263 \\
	gamesmanship              & -0.20957 & 	companywide               & 0.23443 \\
	coachable                 & -0.13991 & 	companion                 & 0.14581 \\
	pointe                    & -0.13315 & 	stockade                  & 0.12727 \\
	runaround                 & -0.11266 &	marketability             & 0.12368 \\
	win95                     & -0.1104 &	millionaire               & 0.11005 \\
	yardage                   & -0.10503 &	governmental              & 0.092267 \\
	guzzle                    & -0.093052 & 	billionaire               & 0.090433 \\
	player                    & -0.089249 & 	businesses                & 0.076009 \\
	wonder                    & -0.086247 & 	zzz\_bush\_administration & 0.072731 \\
	ballad                    & -0.085514 & 	fundacion                 & 0.070655 \\
	\end{tabular}
\end{table}

\begin{table}
	\caption{Results of applying non-negative sparse PCA on the NYtimes dataset with the suggested active-set algorithm. Each column lists words that correspond to nonzero entries in the respective principal vectors. The words are sorted in decreasing magnitude of the respective weights. Given a set of computed principal vectors $\{x_i\}$, the next principal vector is computed on the deflated data matrix $D - \sum (Dx_i) x_i^T$. The algorithm is initialized with the $30$ most positive, or most negative, entries (depending on which one leads to higher explained variance) of the first singular vector of the respective matrix. Furthermore we perform a standard post-processing polishing step at each resulting principal vector, as described in \cite[\S 4.2]{Journee2010}. Note that, unlike Table \ref{tab:pca_tpower} our algorithm results in principal vectors that have clear, user interpretable associated meanings that corresponds to sports, economics, politics, education, and US foreign policy. These words could be used for document classification \cite{Zhang2011}.}
	\label{tab:nonnegative_pca}
	\centering
	\begin{tabular}{lllll}
		\toprule
	Vector \#1  & Vector \#2 & Vector \#3        & Vector \#4          & Vector \#5 \\
	\midrule
	team        & percent    & zzz\_al\_gore     & official            & school     \\
	game        & company    & zzz\_bush         & government          & student    \\
	season      & million    & zzz\_george\_bush & attack              & children   \\
	player      & companies  & campaign          & zzz\_u\_s           & program    \\
	play        & stock      & president         & zzz\_united\_states & teacher    \\
	games       & market     & election          & zzz\_bush           & parent     \\
	point       & billion    & political         & military            & high       \\
	coach       & business   & zzz\_white\_house & zzz\_american       & percent    \\
	run         & fund       & republican        & palestinian         & public     \\
	win         & analyst    & voter             & zzz\_taliban        & education  \\
	yard        & money      & vote              & zzz\_afghanistan    & kid        \\
	guy         & zzz\_enron & presidential      & terrorist           & district   \\
	won         & firm       & democratic        & war                 & test       \\
	played      & investor   & tax               & leader              & family     \\
	hit         & sales      & zzz\_republican   & zzz\_israel         & college    \\
	ball        & industry   & administration    & group               & home       \\
	playing     & plan       & zzz\_clinton      & bin                 & child      \\
	league      & cost       & ballot            & laden               & class      \\
	left        & investment & votes             & country             & group      \\
	home        & quarter    & plan              & administration      & job        \\
	fan         & deal       & democrat          & security            & boy        \\
	shot        & financial  & zzz\_washington   & forces              & money      \\
	field       & customer   & zzz\_congress     & zzz\_israeli        & help       \\
	playoff     & economy    & poll              & zzz\_pakistan       & mother     \\
	night       & chief      & support           & american            & system     \\
	goal        & price      & candidate         & zzz\_washington     & friend     \\
	final       & executive  & zzz\_florida      & nation              & girl       \\
	start       & growth     & candidates        & terrorism           & private    \\
	quarterback & earning    & governor          & weapon              & voucher    \\
	football    & share      & vice              & foreign             & grade
	\end{tabular}
	\end{table}

\section{Conclusion}
This paper introduced an active-set algorithm for the solution of quadratic functions subject to linear constraints and a single norm constraint. The suggested algorithm is based on repeated solutions of the TRS, for which we derived novel theoretical results regarding its local-nonglobal minimizer. The usefulness of the suggested algorithm was demonstrated in a range of real world experiments.

\paragraph{Acknowledgements:} We are grateful to the detailed feedbacks from an anonymous reviewer that significantly improved the quality of this document. Furthermore, we thank Nick Trefethen for providing valuable comments. This work was supported by the EPSRC AIMS CDT grant EP/L015987/1 and Schlumberger. We would also like to acknowledge the support of the NII International Internship Program, which funded the first author's visit to NII, during which this collaboration was launched. We thank Ron Estrin for valuable discussions. We used the University of Oxford Advanced Research Computing (ARC) facility in carrying out this work. http://dx.doi.org/10.5281/zenodo.22558.
\bibliographystyle{spmpsci}
\bibliography{../../Bibliography/bibliography}

\begin{thebibliography}{10}
\providecommand{\url}[1]{{#1}}
\providecommand{\urlprefix}{URL }
\expandafter\ifx\csname urlstyle\endcsname\relax
  \providecommand{\doi}[1]{DOI~\discretionary{}{}{}#1}\else
  \providecommand{\doi}{DOI~\discretionary{}{}{}\begingroup
  \urlstyle{rm}\Url}\fi

\bibitem{Adachi2017}
Adachi, S., Iwata, S., Nakatsukasa, Y., Takeda, A.: {Solving the Trust-Region
  Subproblem By a Generalized Eigenvalue Problem}.
\newblock SIAM Journal on Optimization \textbf{27}(1), 269--291 (2017)

\bibitem{Ahlfors1966}
Ahlfors, L.: Complex Analysis: An Introduction to the Theory of Analytic
  Functions of One Complex Variable, vol.~2.
\newblock McGraw-Hill New York (1966)

\bibitem{Bach2012}
Bach, F., Jenatton, R., Mairal, J., Obozinski, G.: {Optimization with
  Sparsity-Inducing Penalties}.
\newblock Foundations and Trends® in Machine Learning \textbf{4}(1), 1--106
  (2012)

\bibitem{Beck2018}
Beck, A., Vaisbourd, Y.: {Globally Solving the Trust Region Subproblem Using
  Simple First-Order Methods}.
\newblock SIAM Journal on Optimization \textbf{28}(3), 1951--1967 (2018)

\bibitem{Boufounos2008}
{Boufounos}, P.T., {Baraniuk}, R.G.: {1-Bit compressive sensing}.
\newblock In: {2008 42nd Annual Conference on Information Sciences and
  Systems}, pp. 16--21 (2008)

\bibitem{Choi2011}
Choi, S., Paige, C., Saunders, M.: {{MINRES-QLP: A Krylov Subspace Method for
  Indefinite or Singular Symmetric Systems}}.
\newblock SIAM Journal on Scientific Computing \textbf{33}(4), 1810--1836
  (2011)

\bibitem{Conn2000}
Conn, A.R., Gould N., M., Toint, P.: {Trust-Region Methods}.
\newblock Society for Industrial and Applied Mathematics, Philadelphia, PA, USA
  (2000)

\bibitem{Ferreau2014}
Ferreau, H.J., Kirches, C., Potschka, A., Bock, H.G., Diehl, M.: {qpOASES: a
  parametric active-set algorithm for quadratic programming}.
\newblock Mathematical Programming Computation \textbf{6}(4), 327--363 (2014)

\bibitem{Forsythe1965}
Forsythe, G., Golub, G.H.: {On the Stationary Values of a Second-Degree
  Polynomial on the Unit Sphere}.
\newblock Journal of the Society for Industrial and Applied Mathematics
  \textbf{13}(4), 1050--1068 (1965)

\bibitem{Gander1989}
Gander, W., Golub, G.H., {von Matt}, U.: {A constrained eigenvalue problem}.
\newblock Linear Algebra and its Applications \textbf{114-115}, 815 -- 839
  (1989).
\newblock Special Issue Dedicated to Alan J. Hoffman

\bibitem{Garstka2019}
Garstka, M., Cannon, M., Goulart, P.J.: {{COSMO}: A conic operator splitting
  method for convex conic problems}.
\newblock arXiv 1901.10887 (2019)

\bibitem{Gill1991}
Gill, P., Murray, W., Saunders, M., Wright, M.: Inertia-controlling methods for
  general quadratic programming.
\newblock SIAM Review \textbf{33}(1), 1--36 (1991)

\bibitem{Gill1978}
Gill, P.E., Murray, W.: {Numerically stable methods for quadratic programming}.
\newblock Mathematical Programming \textbf{14}(1), 349--372 (1978)

\bibitem{Gould2001}
Gould, N., Hribar, M., Nocedal, J.: {On the Solution of Equality Constrained
  Quadratic Programming Problems Arising in Optimization}.
\newblock SIAM Journal on Scientific Computing \textbf{23}(4), 1376--1395
  (2001)

\bibitem{Gould1999}
Gould, N., Lucidi, S., Roma, M., Toint, P.: {Solving the Trust-Region
  Subproblem using the Lanczos Method}.
\newblock SIAM Journal on Optimization \textbf{9}(2), 504--525 (1999)

\bibitem{Gould2005}
Gould, N., Orban, D., Toint, P.: {Numerical methods for large-scale nonlinear
  optimization}.
\newblock Acta Numerica \textbf{14}, 299–361 (2005)

\bibitem{Gould2015}
Gould, N., Orban, D., Toint, P.: {CUTEst: A Constrained and Unconstrained
  Testing Environment with Safe Threads for Mathematical Optimization}.
\newblock Comput. Optim. Appl. \textbf{60}(3), 545--557 (2015)

\bibitem{Gould2002}
Gould, N., Toint, P.: Numerical methods for large-scale non-convex quadratic
  programming.
\newblock In: Applied Optimization, pp. 149--179. Springer {US} (2002)

\bibitem{gurobi}
{Gurobi Optimization LLC}: {Gurobi Optimizer Reference Manual} (2018).
\newblock \urlprefix\url{http://www.gurobi.com}

\bibitem{Jeyakumar2014}
Jeyakumar, V., Li, G.Y.: {Trust-Region problems with linear inequality
  constraints: exact SDP relaxation, global optimality and robust
  optimization}.
\newblock Mathematical Programming \textbf{147}(1), 171--206 (2014)

\bibitem{Jolliffe2011}
Jolliffe, I.: {Principal Component Analysis}, pp. 1094--1096.
\newblock Springer Berlin Heidelberg, Berlin, Heidelberg (2011)

\bibitem{Jolliffe2003}
Jolliffe, I.T., Trendafilov, N.T., Uddin, M.: {A Modified Principal Component
  Technique Based on the LASSO}.
\newblock Journal of Computational and Graphical Statistics \textbf{12}(3),
  531--547 (2003).
\newblock \urlprefix\url{http://www.jstor.org/stable/1391037}

\bibitem{Journee2010}
Journ{\'e}e, M., Nesterov, Y., Richt\'{a}rik, P., Sepulchre, R.: {Generalized
  Power Method for Sparse Principal Component Analysis}.
\newblock J. Mach. Learn. Res. \textbf{11}, 517--553 (2010)

\bibitem{Lehoucq1998}
Lehoucq, R., Sorensen, D., Yang, C.: {ARPACK Users' Guide}.
\newblock Society for Industrial and Applied Mathematics (1998)

\bibitem{Martinez1994}
Martinez, J.: {Local Minimizers of Quadratic Functions on Euclidean Balls and
  Spheres}.
\newblock SIAM Journal on Optimization \textbf{4}(1), 159--176 (1994)

\bibitem{Murty1987}
Murty, K.G., Kabadi, S.N.: {Some NP-complete problems in quadratic and
  nonlinear programming}.
\newblock Mathematical Programming \textbf{39}(2), 117--129 (1987)

\bibitem{Nocedal2006}
Nocedal, J., Wright, S.J.: {Numerical Optimization}, 2nd edn.
\newblock Springer, New York (2006)

\bibitem{Richtarik2012}
{Richt{\'a}rik}, P., {Tak{\'a}{\v{c}}}, M., {Damla Ahipa{\c{s}}ao{\u{g}}lu},
  S.: {{Alternating Maximization: Unifying Framework for 8 Sparse PCA
  Formulations and Efficient Parallel Codes}}.
\newblock arXiv e-prints arXiv:1212.4137 (2012)

\bibitem{Saad2011}
Saad, Y.: {Numerical Methods for Large Eigenvalue Problems}.
\newblock Society for Industrial and Applied Mathematics (2011)

\bibitem{Salahi2017}
Salahi, M., Taati, A., Wolkowicz, H.: {Local nonglobal minima for solving
  large-scale extended Trust-Region Subproblems}.
\newblock Computational Optimization and Applications \textbf{66}(2), 223--244
  (2017)

\bibitem{Silvester2000}
Silvester, J.R.: {Determinants of block matrices}.
\newblock The Mathematical Gazette \textbf{84}(501), 460–467 (2000)

\bibitem{Stellato2017}
Stellato, B., Banjac, G., Goulart, P., Bemporad, A., Boyd, S.: {{OSQP}: An
  Operator Splitting Solver for Quadratic Programs}.
\newblock ArXiv e-prints  (2017)

\bibitem{Wachter2006}
W\"{a}chter, A., Biegler, L.T.: {On the Implementation of an Interior-point
  Filter Line-search Algorithm for Large-scale Nonlinear Programming}.
\newblock Mathematical Programming \textbf{106}(1), 25--57 (2006)

\bibitem{Yuan2013}
Yuan, X., Zhang, T.: {Truncated Power Method for Sparse Eigenvalue Problems}.
\newblock J. Mach. Learn. Res. \textbf{14}(1), 899--925 (2013)

\bibitem{Zhang2011}
Zhang, Y., {El Ghaoui}, L.: {Large-Scale Sparse Principal Component Analysis
  with Application to Text Data}.
\newblock In: J.~Shawe-Taylor, R.S. Zemel, P.L. Bartlett, F.~Pereira, K.Q.
  Weinberger (eds.) {Advances in Neural Information Processing Systems 24}, pp.
  532--539. Curran Associates, Inc. (2011)

\end{thebibliography}
\appendix
\section*{Appendix} \label{appendix}
In this Section we show that the suggested active-set algorithm of \S\ref{sec:main_problem} can perform $\ell_1$ sparse PCA, that is, to solve the problem
\begin{equation*}
	\begin{array}{ll}
		\text{minimize}   & f(x) \eqdef -x^T \Sigma x \\
		\text{subject to} & \norm{x}_2 \leq 1                         \\
											& \norm{x}_1 \leq \gamma.
	\end{array}
\end{equation*}
\begin{theorem}
	Problems \eqref{eqn:scotlass} and \eqref{eqn:scotlass_reformulated} have the same optimal value and every (local) minimizer $\bar w$ of \eqref{eqn:scotlass_reformulated} defines a (local) minimizer $\bar x = \bar w_1 - \bar w_2$ for \eqref{eqn:scotlass}.
\end{theorem}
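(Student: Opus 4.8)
The plan is to treat the two problems as related through the linear map $\phi(w) \eqdef w_1 - w_2$, which sends the feasible set of \eqref{eqn:scotlass_reformulated} into that of \eqref{eqn:scotlass} while preserving objective values, since $g(w) = -(w_1-w_2)^T\Sigma(w_1-w_2) = f(\phi(w))$. First I would establish feasibility in both directions. If $w \ge 0$ is feasible for \eqref{eqn:scotlass_reformulated}, then $x = \phi(w)$ satisfies $\norm{x}_1 \le \mathbf{1}^T w \le \gamma$ (since $|w_{1,i} - w_{2,i}| \le w_{1,i}+w_{2,i}$) and $\norm{x}_2^2 = \norm{w}_2^2 - 2 w_1^T w_2 \le \norm{w}_2^2 \le 1$ (since $w_1^T w_2 \ge 0$), so $x$ is feasible for \eqref{eqn:scotlass}. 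Conversely, given feasible $x$ for \eqref{eqn:scotlass}, its positive/negative part split $\hat w \eqdef [x_+^T\; x_-^T]^T$, where $x_+ = \max(x,0)$ and $x_- = \max(-x,0)$ componentwise, satisfies $\hat w \ge 0$, $\mathbf{1}^T \hat w = \norm{x}_1 \le \gamma$, $\norm{\hat w}_2 = \norm{x}_2 \le 1$, and $\phi(\hat w) = x$. Since both directions preserve objective values, the two optimal values coincide.

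For the local-minimizer claim, the key reduction is that it suffices to exhibit, for each local minimizer $\bar w$, a local section of $\phi$: if every $x$ feasible for \eqref{eqn:scotlass} sufficiently close to $\bar x \eqdef \phi(\bar w)$ can be written as $x = \phi(w)$ for some feasible $w$ close to $\bar w$, then local minimality of $\bar w$ forces $f(x) = g(w) \ge g(\bar w) = f(\bar x)$, making $\bar x$ a local minimizer. I would build such a section coordinatewise from the support pattern of $\bar w$: on coordinates where exactly one of $\bar w_{1,i}, \bar w_{2,i}$ is positive, or both are zero, I use the single-coordinate complementary split $w_{1,i} = (x_i)_+$, $w_{2,i} = (x_i)_-$, which keeps $w_{1,i}^2 + w_{2,i}^2 = x_i^2$ and $w_{1,i} + w_{2,i} = |x_i|$; on coordinates where both $\bar w_{1,i}, \bar w_{2,i}$ are strictly positive I use the balanced perturbation $w_{1,i} = \bar w_{1,i} + \tfrac{1}{2}(x_i - \bar x_i)$, $w_{2,i} = \bar w_{2,i} - \tfrac{1}{2}(x_i - \bar x_i)$, which reproduces $x_i = w_{1,i} - w_{2,i}$, stays nonnegative for small perturbations, and leaves $w_{1,i} + w_{2,i}$ unchanged.

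The main obstacle is that $\phi$ is not injective: coordinates with overlapping support ($\bar w_{1,i}, \bar w_{2,i} > 0$) carry ``excess mass'' that cannot be shed while staying near $\bar w$, so the naive section can push $\mathbf{1}^T w$ or $\norm{w}_2^2$ above their bounds exactly when those constraints are active at $\bar w$. I would resolve this by first replacing $\bar w$ with a nearby local minimizer at which both constraints are strictly inactive. Concretely, if some coordinate has overlapping support, reducing both $\bar w_{1,i}$ and $\bar w_{2,i}$ by a common small $t > 0$ leaves $\phi$ and hence the objective unchanged, keeps the point feasible, and strictly decreases both $\mathbf{1}^T w$ and $\norm{w}_2^2$; choosing $t$ small enough keeps the new point $\bar w'$ inside the neighborhood on which $\bar w$ is minimal, so that $\bar w'$ is itself a local minimizer (it attains the same, minimal, value on a whole neighborhood) with $\phi(\bar w') = \bar x$ and with both constraints now carrying strictly positive slack. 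When $\bar w$ already has complementary support the section built above automatically satisfies $\norm{w}_2^2 = \norm{x}_2^2 \le 1$ and $\mathbf{1}^T w = \norm{x}_1 \le \gamma$ with no slack needed; otherwise, applying the section at $\bar w'$ — where the $O(\norm{x-\bar x})$ perturbations of the constraint values are dominated by the positive slack for $x$ close enough to $\bar x$ — yields a feasible $w$ near $\bar w'$ with $\phi(w) = x$, and local minimality of $\bar w'$ closes the argument. The delicate points to verify are the simultaneous control of the linear ($\ell_1$) and quadratic (norm) constraints under the section, and the claim that a slightly perturbed equal-objective point inside the minimizing neighborhood is again a local minimizer.
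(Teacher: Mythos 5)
Your proposal is correct, and its first half (the two feasibility maps and the equality of optimal values) coincides with the paper's argument. For the local-minimizer claim, however, you take a genuinely different route. The paper argues by contradiction: if $\bar x = \bar w_1 - \bar w_2$ were not a local minimizer of \eqref{eqn:scotlass}, strictly better feasible points $\tilde x$ arbitrarily close to $\bar x$ would split as $(\tilde x_+, -\tilde x_-)$ into feasible points of \eqref{eqn:scotlass_reformulated} with strictly smaller objective and arbitrarily close to $\bar w$; that last step silently uses $\bar w_1 = \bar x_+$ and $\bar w_2 = -\bar x_-$, i.e.\ the complementarity $\bar w_1^T \bar w_2 = 0$, which the paper establishes separately (Lemma \ref{lem:sparse_pca_ancillary}) by shedding common mass and then scaling by $1+\alpha$, a step that requires $g(\bar w) < 0$. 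You instead construct a local right inverse (section) of $w \mapsto w_1 - w_2$ and neutralize the only obstruction --- overlapping support combined with active constraints --- by first moving to a nearby equal-objective local minimizer $\bar w'$ with strict slack, using only the mass-shedding perturbation and never the scaling. The two proofs thus share that perturbation but deploy it to different ends: the paper uses it to rule out overlapping support at local minimizers, you use it to create room for your section. What each buys: the paper's route is shorter, and the complementarity lemma it leans on is of independent interest (it gives the cardinality claim stated after the theorem); your route is self-contained and never needs $g(\bar w) < 0$, so it also covers the degenerate situation where $g$ is identically zero near $\bar w$ (e.g.\ $\Sigma$ singular with $\Sigma(\bar w_1 - \bar w_2) = 0$), in which local minimizers with $\bar w_1^T \bar w_2 > 0$ genuinely exist and the identification $\bar w_1 = \bar x_+$ fails; your overlapping-support branch handles exactly this case, since the slack argument there yields $f(x) \geq f(\bar x)$ for \emph{every} $x$ near $\bar x$. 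The delicate points you flag (simultaneous control of the $\ell_1$ and quadratic constraints, and local minimality of the perturbed point $\bar w'$) do check out: the section changes both constraint values by $O(\|x - \bar x\|_\infty)$, which the strict slack absorbs, and $\bar w'$ inherits local minimality because it attains the value $g(\bar w)$ inside the neighborhood on which $\bar w$ is minimal.
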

\begin{proof}
	First note that both Problems \eqref{eqn:scotlass} and \eqref{eqn:scotlass_reformulated} have a minimizer as their objective is smooth and their feasible set compact. For any feasible $x$ of \eqref{eqn:scotlass}, the choice
	\begin{equation*}
		w_1 = x_+ \quad \text{ and } \quad w_2 = -x_-,
	\end{equation*}
	where $x_+$ and $x_-$ are defined element-wise as follows
	\begin{equation*}
		x_+ \eqdef \max(x, 0) \quad \text{ and } x_- \eqdef \min(x, 0),
	\end{equation*}
	is feasible for \eqref{eqn:scotlass_reformulated} since $\norm{w_1}_2^2 + \norm{w_2}_2^2 = \norm{x}_2^2 = 1$ and $\mathbf{1}^T w_1 + \mathbf{1}^T w_2 = \norm{x}_1 \leq \gamma$, while $f(x) =  g([w_1;\; w_2])$ since
	\begin{align*}
		-(w_1 - w_2)^T \Sigma (w_1 - w_2) = -x^T \Sigma x.
	\end{align*}
	Thus the optimal value of \eqref{eqn:scotlass_reformulated} is less than that of \eqref{eqn:scotlass}.

	Moreover, for any minimizer $(\bar w_1, \bar w_2)$ of \eqref{eqn:scotlass_reformulated}, choosing $\bar x = \bar w_1 - \bar w_2$ gives $f(\bar x) = g([\bar w_1;\; \bar w_2])$ with $\bar x$ feasible for \eqref{eqn:scotlass} as we  proceed to show. Indeed, note that $\bar w_1^T \bar w_2 \geq 0$ and $\norm{\bar w_1}_2^2 + \norm{\bar w_2}_2^2 = 1$ thus
	\begin{equation}
		\norm{\bar x}_2^2 = \norm{\bar w_1}_2^2 + \norm{\bar w_2}_2^2 - 2\bar w_1^T \bar w_2 \leq \norm{\bar w_1}_2^2 + \norm{\bar w_2}_2^2 = 1
	\end{equation}
	and 
	\begin{equation}
		\norm{\bar x}_1 = \norm{\bar w_1 - \bar w_2}_1 \leq \norm{\bar w_1}_1 + \norm{\bar w_2}_1 \leq \gamma.
	\end{equation}

	Assume that $\bar x$ is not a minimizer for \eqref{eqn:scotlass}. Then, there exists an $\tilde x$ with $\norm{\tilde x - \bar x}_\infty$ arbitrarily small such that $f(\bar x) > f(\tilde x)$. Thus $(\tilde w_1, \tilde w_2) \eqdef (\tilde x_+, -\tilde x_-)$ is feasible for \eqref{eqn:scotlass_reformulated} with $ g([\tilde w_1;\; \tilde w_2]) = f(\tilde x) < f(\bar w) = g([\bar w_1;\; \bar w_2]))$ and
	\begin{equation}
		\norm{
			\begin{bmatrix}
				\tilde w_1 - \bar w_1 \\
				\tilde w_2 - \bar w_2
			\end{bmatrix}
		}_\infty
		=
		\norm{
			\begin{bmatrix}
				\tilde x_+ - \bar x_+ \\
				-(\tilde x_- - \bar x_-)
			\end{bmatrix}
		}_\infty
		\leq 
		\norm{\tilde x - \bar x}_\infty
	\end{equation}
	i.e.\ $(\tilde w_1, \tilde w_2)$ is arbitrarily close to $(\bar w_1, \bar w_2)$. This contradicts the assumption that $(\bar w_1, \bar w_2)$ is a local minimizer for \eqref{eqn:scotlass_reformulated}.

	Finally, since the optimal value of \eqref{eqn:scotlass_reformulated} is less than the one of \eqref{eqn:scotlass} and the global optimum $(\bar w_1, \bar w_2)$  of \eqref{eqn:scotlass_reformulated} defines a feasible $\bar x$ for \eqref{eqn:scotlass} with $ g([\bar w_1;\; \bar w_2]) = f(\bar x)$ we conclude that the optimal values of \eqref{eqn:scotlass} and \eqref{eqn:scotlass_reformulated} coincide.
\qed\end{proof}
Furthermore, the following complementarity condition holds which implies that the sparsity of a solution $\bar w$ of \eqref{eqn:scotlass_reformulated} is equal to the sparsity of the solution $\bar x = \bar w_1 - \bar w_2$ for Problem \eqref{eqn:scotlass}.
\begin{lemma} \label{lem:sparse_pca_ancillary}
	Every (local) minimizer $\bar w$ of \eqref{eqn:scotlass_reformulated} has $\bar w_1^T \bar w_2 = 0$.
\end{lemma}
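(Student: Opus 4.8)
The plan is to argue by contradiction: assuming a local minimizer $\bar w = [\bar w_1;\, \bar w_2]$ of \eqref{eqn:scotlass_reformulated} with $\bar w_1^T \bar w_2 > 0$, I will exhibit, in every neighborhood of $\bar w$, a feasible point with strictly smaller objective. Since $\bar w \geq 0$ we have $\bar w_1^T \bar w_2 = \sum_i (\bar w_1)_i (\bar w_2)_i \geq 0$, so if this sum is positive there is a coordinate $i$ with both $(\bar w_1)_i > 0$ and $(\bar w_2)_i > 0$. The key observation is that such a coordinate represents ``wasted budget'': the \emph{shrink} direction $d^{\mathrm{s}}$ that decreases $(w_1)_i$ and $(w_2)_i$ at unit rate leaves $w_1 - w_2$ (hence the objective $g$) unchanged, stays nonnegative for a small step, and strictly decreases both $\mathbf{1}^T w$ (at rate $2$) and $\norm{w}_2^2$ (at rate $2((\bar w_1)_i + (\bar w_2)_i) > 0$). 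Moving along $d^{\mathrm{s}}$ therefore opens up first-order slack in both inequality constraints while preserving the objective; this is the engine of the whole argument.

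First I would treat the main case $\bar x^T \Sigma \bar x > 0$, writing $\bar x := \bar w_1 - \bar w_2$. Here I combine the shrink with a uniform scaling: set $w(t) = (1+s(t))(\bar w - t\, d^{\mathrm{s}})$ for small $t>0$, where $s(t) = \Theta(t)$ is chosen small enough relative to $t$ that feasibility is maintained. The order-$t$ slack freed in the norm and lasso constraints can absorb the order-$s$ growth caused by scaling provided $s \le C t$ for a suitable positive constant $C$ (dictated by whichever of the two constraints is active at $\bar w$), and nonnegativity is preserved since scaling by $(1+s)>0$ does not change signs. Because scaling multiplies $\bar x$ by $(1+s)$, we obtain $g(w(t)) = -(1+s(t))^2\,\bar x^T \Sigma \bar x < -\bar x^T \Sigma \bar x = g(\bar w)$ while $w(t) \to \bar w$, contradicting local optimality.

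It then remains to dispose of the degenerate case $\bar x^T \Sigma \bar x = 0$. Since $\Sigma \succeq 0$ this forces $\Sigma \bar x = 0$, so $g(\bar w)=0$ is the global maximum of $g$ and scaling no longer produces descent. Instead I reuse the freed slack to perturb a single high-variance coordinate: as $\Sigma \neq 0$ (the nondegenerate covariance of interest) there is an index $j$ with $\Sigma_{jj}>0$, and increasing $(w_1)_j$ by $t$ becomes feasible once the shrink has opened the constraints (the norm cost being only second order when $(\bar w_1)_j = 0$). This perturbation changes the objective by $-\big(2t\,\bar x^T \Sigma e_j + t^2 \Sigma_{jj}\big) = -t^2 \Sigma_{jj} < 0$, using $\Sigma \bar x = 0$. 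Coupling the bump with a shrink step of comparable magnitude keeps the iterate feasible and convergent to $\bar w$ while strictly decreasing $g$, which is again a contradiction.

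The main obstacle, and the only place demanding care, is the simultaneous bookkeeping of the two inequality constraints during the combined perturbation: one must verify that a single step size frees enough budget in whichever of $\norm{w}_2 \le 1$ and $\mathbf{1}^T w \le \gamma$ are active while still yielding a first- (or, in the degenerate case, second-) order decrease of $g$. Concretely this reduces to the sign conditions $s \le t\,((\bar w_1)_i + (\bar w_2)_i)$ and $s \le 2t/\gamma$, both of which hold for all sufficiently small $t$, so the construction goes through.
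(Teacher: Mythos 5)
Your proof is correct, and its engine is the same as the paper's: at a coordinate $i$ with $(\bar w_1)_i>0$ and $(\bar w_2)_i>0$, shrinking both entries leaves $w_1-w_2$ (hence $g$) unchanged while creating strict slack in both $\mathbf{1}^T w\le\gamma$ and $\|w\|_2\le 1$, after which a uniform rescaling by $(1+s)$ strictly decreases the objective; this is precisely the paper's $\tilde w$ followed by $(1+\alpha)\tilde w$. Where you genuinely depart from the paper is the degenerate case $\bar x^T\Sigma\bar x=0$. The paper dismisses it in one clause, asserting $g(\bar w)<0$ ``as otherwise $\bar w$ would be a global maximizer,'' but being a global maximizer does not by itself contradict being a local minimizer (the two coexist whenever $g$ is locally constant on the feasible set, and indeed for $\Sigma=0$ the lemma is false, so some nondegeneracy is unavoidable). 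Your treatment closes this soft spot: from $\Sigma\succeq 0$ and $\bar x^T\Sigma\bar x=0$ you deduce $\Sigma\bar x=0$, pick $j$ with $\Sigma_{jj}>0$ (possible exactly when $\Sigma\neq 0$), and spend the slack freed by the shrink on a bump $t e_j$ added to $w_1$, which decreases $g$ by $t^2\Sigma_{jj}$ at second order. So your argument is longer but strictly more rigorous than the paper's, and it makes explicit the hypothesis $\Sigma\neq 0$ under which the statement actually holds. One small polish for the main case: choose the scaling rate strictly below the slack rates, e.g.\ $s=\frac{t}{2}\min\bigl((\bar w_1)_i+(\bar w_2)_i,\,2/\gamma\bigr)$, rather than allowing equality in $s\le t\bigl((\bar w_1)_i+(\bar w_2)_i\bigr)$; with equality the first-order slack cancels exactly and the $O(t^2)$ terms (e.g.\ the $+2t^2$ in $\|\bar w-t\,d^{\mathrm{s}}\|_2^2$) can violate the norm constraint when it is active, whereas any strict safety factor makes the construction go through for all sufficiently small $t$.
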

\begin{proof}
	Assume the contrary, i.e. that $\bar w_1^T \bar w_2 \neq 0$. Due to feasibility of $\bar w = [\bar w_1;\; \bar w_2]$ we have $\bar w \geq 0$. Thus, there exists an $i \in \{1, \dots, n\}$ such that $\bar w_{(i)}, \bar w_{(i + n)} > 0$. Define $\tilde w$ element-wise as
	\begin{equation} \tilde w_{(k)} =
		\begin{cases}
		 \bar w_{(k)} - \epsilon, &k = i, i + n \\
		 \bar w_{(k)}, &\text{otherwise}
		\end{cases}
	\end{equation}
	for $\epsilon > 0$ sufficiently small so that $\tilde w > 0$. We will show that for a sufficiently small, positive $\alpha$ the point $(1 + \alpha) \tilde w$ is feasible and of smaller objective value than the local minimizer $\bar w$ while $\norm{\bar w - (1 + \alpha) \tilde w}$ can become arbitrarily small for an appropriate choice of $\alpha$ and $\epsilon$. This will be a contradiction of (local) optimality of $\bar w$.

	Indeed, note that $\mathbf{1}^T\tilde w < \mathbf{1}^T\bar w \leq \gamma$ and $\norm{\tilde w}_2 < \norm{\bar w}_2 \leq 1$, thus $(1 + \alpha) \tilde w$ is feasible for any $\alpha > 0$ sufficiently small. Moreover, $g(\tilde w) = g(\bar w)$ giving $g ((1+ \alpha) \tilde w) = (1 + \alpha)^2 g(\bar w) < g(\bar w)$, since $g(\bar w) < 0$ as otherwise $\bar w$ would be a global maximizer of $\eqref{eqn:scotlass_reformulated}$.
\qed\end{proof}

Finally, we present detailed comparison results of our algorithm against \texttt{Ipopt} on the problems described in Section \ref{sec:sqp}.
\begin{sidewaystable} 
	\sisetup{round-mode=places}
	\caption{Comparison of the suggested active-set algorithm against \texttt{Ipopt} in calculating SQP search directions on 63 nonconvex problems of the \texttt{CUTEst} library. These include all the problems with linear inequality constraints, at most $n = 2000$ variables and an indefinite Hessian at the initial point. The initial point is computed with \texttt{GUROBI} as the nearest feasible point (in the two norm) to the initial point suggested by \texttt{CUTEst}. The column $m$ denotes the number of inequality constraints, while columns $t, \delta f^*, \text{error}$ denote the solution time (in seconds), the difference in objective value $f(\delta x^*) - f(0)$ achieved and the residual error defined by \eqref{eqn:error} of the suggested active-set solver. Similarly, $\delta f^*_{\text{ipopt}}, \text{error}_{\text{ipopt}}$ denote respectively identical metrics for \texttt{Ipopt}. $t_{\text{ipopt}}^{\text{sparse}}$ $(t_{\text{ipopt}}^{\text{dense}})$ denotes \texttt{Ipopt}'s timings when the matrices are passed in sparse (dense) format. Hardware used: Intel E5-2640v3, 64GB memory.}
	\label{tab:cutest}
	\centering
	\resizebox{\textwidth}{!}{
	\begin{tabular}{
		ll
	*{2}{S[]}
	*{3}{S[round-precision=2, table-format=1.2e-2]}
	*{2}{S[round-precision=2, table-format=-1.2e-2]}
	*{2}{S[round-precision=2, table-format=-1.2e-2]}
	}
	\toprule
	{Problem} & {Parameter} & {$n$}    & {$m$}    & {$t$}           & {$t_{\text{ipopt}}^{\text{dense}}$} & {$t_{\text{ipopt}}^{\text{sparse}}$}         & {$\delta f^*$}            & {$\delta f^*_{\text{ipopt}}$}           & {$\text{error}$}      & {$\text{error}_{\text{ipopt}}$}      \\
	\midrule
AVION2       &           & 49   & 113  & 4.444538E-02 & 4.670914E-02 & 1.723294E-02 & -3.645564E+02 & -3.645789E+02 & 1.479577E-10 & 5.426607E-04 \\
BLOCKQP1     & N=100     & 210  & 521  & 7.184425E-03 & 1.308548E+00 & 5.072931E-02 & -6.745688E+00 & -6.745688E+00 & 9.992007E-16 & 9.472623E-09 \\
BLOCKQP2     & N=100     & 210  & 521  & 1.333553E-02 & 1.234218E+00 & 5.287435E-02 & -6.745688E+00 & -6.745688E+00 & 6.661338E-16 & 9.374904E-09 \\
BLOCKQP3     & N=100     & 210  & 521  & 1.551006E-02 & 7.109685E-01 & 3.531169E-02 & -4.018526E+00 & -4.018526E+00 & 2.164935E-15 & 9.224197E-09 \\
BLOCKQP4     & N=100     & 210  & 521  & 9.190950E-03 & 7.812505E-01 & 4.003907E-02 & -4.018526E+00 & -4.018526E+00 & 1.015854E-14 & 8.997483E-09 \\
BLOCKQP5     & N=100     & 210  & 521  & 9.125178E-03 & 7.250970E-01 & 3.328339E-02 & -4.018526E+00 & -4.018526E+00 & 1.160183E-14 & 9.325761E-09 \\
BLOWEYA      & N=100     & 202  & 304  & 2.057082E-01 & 6.153741E-01 & 4.804298E-02 & -2.902540E-03 & -2.903560E-03 & 1.103075E-12 & 4.457334E-05 \\
BLOWEYB      & N=100     & 202  & 304  & 3.518280E-01 & 5.780202E-01 & 4.473963E-02 & -2.574329E-03 & -2.575152E-03 & 1.452606E-12 & 4.834189E-05 \\
BLOWEYC      & N=100     & 202  & 304  & 3.704172E-02 & 5.043310E-01 & 4.545440E-02 & -2.714783E-03 & -2.715942E-03 & 3.039952E-15 & 3.156699E-05 \\
EQC          &           & 9    & 21   & 2.374110E-04 & 3.362073E-02 & 3.607733E-02 & -1.653931E+00 & -4.903961E+03 & 0.000000E+00 & 5.729870E+00 \\
EXPFITA      &           & 5    & 22   & 9.989510E-04 & 5.855086E-03 & 7.309660E-03 & -1.529569E+01 & -1.529569E+01 & 7.813195E-12 & 1.159203E-07 \\
EXPFITB      &           & 5    & 102  & 8.978320E-04 & 1.018311E-02 & 1.349212E-02 & -7.615627E+01 & -7.615627E+01 & 1.547296E-11 & 2.935301E-06 \\
EXPFITC      &           & 5    & 502  & 2.641202E-03 & 4.953891E-02 & 6.193263E-02 & -3.810092E+02 & -3.810092E+02 & 8.583134E-13 & 1.278743E-04 \\
FERRISDC     & n=100     & 1100 & 2110 & 1.632799E+01 & 4.871856E+02 & 2.109886E+00 & -2.314228E-01 & -2.314213E-01 & 9.975523E-09 & 4.939870E-05 \\
GOULDQP1     &           & 32   & 81   & 4.174306E-03 & 2.652730E-02 & 1.418419E-02 & -8.790888E+02 & -8.790888E+02 & 9.783285E-13 & 7.851076E-05 \\
HIMMELBJ     &           & 45   & 61   & 4.420100E+00 & 3.329924E-02 & 2.115145E-02 & -7.390332E+01 & -7.391036E+01 & 3.586903E+00 & 1.144493E-05 \\
HS105        &           & 8    & 17   & 1.351519E-03 & 4.024936E-03 & 4.477112E-03 & -8.081024E+01 & -8.081024E+01 & 1.452172E-12 & 2.669379E-07 \\
HS24         &           & 2    & 5    & 3.546430E-04 & 6.277818E-03 & 6.520859E-03 & -1.311008E-01 & -1.311008E-01 & 2.220446E-16 & 1.623253E-08 \\
HS36         &           & 3    & 7    & 2.946660E-04 & 5.016933E-03 & 5.453507E-03 & -1.832051E+02 & -1.832051E+02 & 7.105427E-14 & 9.974060E-09 \\
HS37         &           & 3    & 8    & 2.950220E-04 & 5.691497E-03 & 4.989736E-03 & -1.832051E+02 & -1.832051E+02 & 9.947598E-14 & 9.974059E-09 \\
HS41         &           & 4    & 9    & 1.630200E-04 & 4.818620E-03 & 5.279881E-03 & -1.562500E-02 & -1.562500E-02 & 5.551115E-17 & 1.329137E-08 \\
HS44         &           & 4    & 10   & 4.904870E-04 & 8.343742E-03 & 6.473175E-03 & -1.304760E+00 & -1.304760E+00 & 2.442491E-15 & 3.577725E-08 \\
HS44NEW      &           & 4    & 10   & 6.557700E-04 & 4.320227E-03 & 4.416368E-03 & -1.847553E+00 & -1.847553E+00 & 2.220446E-15 & 8.037391E-09 \\
HS55         &           & 6    & 14   & 2.517480E-04 & 5.743211E-03 & 5.443675E-03 & -1.000000E+00 & -2.236068E+00 & 4.000000E+00 & 5.616085E-05 \\
	\end{tabular}}
\end{sidewaystable}
\begin{sidewaystable}
	\sisetup{round-mode=places}
	\caption{Continuation of Table \ref{tab:cutest}}
	\label{tab:cutest_2}
	\centering
	\resizebox{\textwidth}{!}{
	\begin{tabular}{
		ll
	*{2}{S[]}
	*{3}{S[round-precision=2, table-format=1.2e-2]}
	*{2}{S[round-precision=2, table-format=-1.2e-2]}
	*{2}{S[round-precision=2, table-format=-1.2e-2]}
	}
	\toprule
	{Problem} & {Parameter} & {$n$}    & {$m$}    & {$t$}           & {$t_{\text{ipopt}}^{\text{dense}}$} & {$t_{\text{ipopt}}^{\text{sparse}}$}         & {$\delta f^*$}            & {$\delta f^*_{\text{ipopt}}$}           & {$\text{error}$}      & {$\text{error}_{\text{ipopt}}$}      \\
	\midrule
LEUVEN7      &           & 360  & 1605 & 5.521594E-01 & 6.015317E+00 & 3.374051E-01 & -4.858191E+02 & -4.858192E+02 & 2.160050E-12 & 5.712026E-05 \\
LINCONT      &           & 1257 & 2435 & 6.700989E+01 & 4.709799E+02 & 6.443721E-01 & -1.113381E-18 & -1.558091E-05 & 2.899090E-10 & 3.035288E-07 \\
MPC10        &           & 1530 & 3571 & 2.982143E+01 & 5.949118E+02 & 9.300218E-01 & -3.155428E+03 & -3.155438E+03 & 1.636501E-11 & 9.900918E-05 \\
MPC11        &           & 1530 & 3571 & 3.138670E+01 & 6.484969E+02 & 9.605708E-01 & -3.155339E+03 & -3.155349E+03 & 2.258621E-11 & 1.353352E-04 \\
MPC12        &           & 1530 & 3571 & 3.015642E+01 & 6.625879E+02 & 9.935781E-01 & -3.155068E+03 & -3.155078E+03 & 1.451272E-11 & 9.204114E-05 \\
MPC13        &           & 1530 & 3571 & 3.139020E+01 & 6.049481E+02 & 8.909481E-01 & -3.154808E+03 & -3.154819E+03 & 1.516229E-11 & 1.890977E-04 \\
MPC14        &           & 1530 & 3571 & 3.036349E+01 & 6.181861E+02 & 9.324590E-01 & -3.154810E+03 & -3.154821E+03 & 2.793515E-11 & 1.047976E-04 \\
MPC15        &           & 1530 & 3571 & 3.230341E+01 & 5.987057E+02 & 9.142717E-01 & -3.153846E+03 & -3.153859E+03 & 2.210518E-11 & 1.069349E-04 \\
MPC16        &           & 1530 & 3571 & 3.241830E+01 & 6.266897E+02 & 9.186500E-01 & -3.153841E+03 & -3.153853E+03 & 1.616848E-11 & 1.070100E-04 \\
MPC2         &           & 1530 & 3571 & 2.824377E+01 & 5.600366E+02 & 8.767303E-01 & -3.157144E+03 & -3.157151E+03 & 1.161907E-11 & 1.056836E-04 \\
MPC3         &           & 1530 & 3571 & 2.783808E+01 & 6.103022E+02 & 9.163411E-01 & -3.157321E+03 & -3.157329E+03 & 1.224183E-11 & 1.047022E-04 \\
MPC4         &           & 1530 & 3571 & 3.018797E+01 & 6.347866E+02 & 9.388815E-01 & -3.156964E+03 & -3.156972E+03 & 1.548417E-11 & 1.033535E-04 \\
MPC5         &           & 1530 & 3571 & 2.943601E+01 & 6.344659E+02 & 9.331457E-01 & -3.157075E+03 & -3.157083E+03 & 1.574789E-11 & 1.044170E-04 \\
MPC6         &           & 1530 & 3571 & 3.035928E+01 & 5.720201E+02 & 9.081594E-01 & -3.151828E+03 & -3.151837E+03 & 1.558678E-11 & 1.905436E-04 \\
MPC7         &           & 1530 & 3571 & 2.897083E+01 & 6.455158E+02 & 9.593935E-01 & -3.155357E+03 & -3.155366E+03 & 1.857497E-11 & 1.770294E-04 \\
MPC8         &           & 1530 & 3571 & 2.761454E+01 & 5.973750E+02 & 9.020797E-01 & -3.155408E+03 & -3.155418E+03 & 1.359729E-11 & 1.503306E-04 \\
MPC9         &           & 1530 & 3571 & 2.949440E+01 & 6.456766E+02 & 9.470979E-01 & -3.155392E+03 & -3.155402E+03 & 1.521752E-11 & 1.044509E-04 \\
NASH         &           & 72   & 138  & 2.023284E-02 & 9.083129E-02 & 2.347952E-02 & -1.335068E-16 & -2.562575E-05 & 1.101341E-13 & 2.652532E-07 \\
NCVXQP1      & N=1000    & 1000 & 2500 & 3.468481E-01 & 5.037756E+01 & 2.530511E-01 & -7.824962E+04 & -7.824962E+04 & 9.094947E-12 & 3.340764E-07 \\
NCVXQP2      & N=1000    & 1000 & 2500 & 3.615064E-01 & 5.018531E+01 & 2.535988E-01 & -5.736680E+04 & -5.736680E+04 & 1.273293E-11 & 2.509192E-07 \\
NCVXQP3      & N=1000    & 1000 & 2500 & 3.681914E-01 & 4.789657E+01 & 2.396625E-01 & -4.998258E+04 & -4.998258E+04 & 1.273293E-11 & 2.732257E-07 \\
NCVXQP4      & N=1000    & 1000 & 2250 & 3.495667E-01 & 3.453693E+01 & 2.044327E-01 & -7.824962E+04 & -7.824962E+04 & 5.184120E-11 & 3.340765E-07 \\
NCVXQP5      & N=1000    & 1000 & 2250 & 3.553980E-01 & 3.450249E+01 & 2.057896E-01 & -5.736680E+04 & -5.736680E+04 & 1.714398E-09 & 2.509195E-07 \\
	\end{tabular}}
\end{sidewaystable}
\begin{sidewaystable}
	\sisetup{round-mode=places}
	\caption{Continuation of Table \ref{tab:cutest}}
	\label{tab:cutest_3}
	\centering
	\resizebox{\textwidth}{!}{
	\begin{tabular}{
		ll
	*{2}{S[]}
	*{3}{S[round-precision=2, table-format=1.2e-2]}
	*{2}{S[round-precision=2, table-format=-1.2e-2]}
	*{2}{S[round-precision=2, table-format=-1.2e-2]}
	}
	\toprule
	{Problem} & {Parameter} & {$n$}    & {$m$}    & {$t$}           & {$t_{\text{ipopt}}^{\text{dense}}$} & {$t_{\text{ipopt}}^{\text{sparse}}$}         & {$\delta f^*$}            & {$\delta f^*_{\text{ipopt}}$}           & {$\text{error}$}      & {$\text{error}_{\text{ipopt}}$}      \\
	\midrule
NCVXQP6      & N=1000    & 1000 & 2250 & 3.642895E-01 & 3.295662E+01 & 1.933023E-01 & -4.998258E+04 & -4.998258E+04 & 5.820766E-11 & 2.732277E-07 \\
NCVXQP7      & N=1000    & 1000 & 2750 & 3.584502E-01 & 7.042072E+01 & 2.941084E-01 & -7.824962E+04 & -7.824962E+04 & 5.129550E-10 & 3.340764E-07 \\
NCVXQP8      & N=1000    & 1000 & 2750 & 3.669888E-01 & 7.371150E+01 & 2.933839E-01 & -5.736680E+04 & -5.736680E+04 & 8.305506E-09 & 2.509189E-07 \\
NCVXQP9      & N=1000    & 1000 & 2750 & 3.835907E-01 & 6.705865E+01 & 2.787119E-01 & -4.998258E+04 & -4.998258E+04 & 2.119577E-08 & 2.732247E-07 \\
PENTAGON     &           & 6    & 15   & 1.087885E-03 & 9.631484E-03 & 9.260053E-03 & -2.699815E-02 & -2.266403E-02 & 8.267692E-15 & 2.092105E-06 \\
QC           &           & 9    & 22   & 2.120020E-04 & 1.348843E-02 & 1.216467E-02 & -1.114234E+02 & -1.114236E+02 & 0.000000E+00 & 1.875249E-05 \\
QCNEW        &           & 9    & 21   & 2.276460E-04 & 1.657338E-02 & 1.513761E-02 & -1.653931E+00 & -4.112503E+01 & 0.000000E+00 & 1.990100E-04 \\
QPNBLEND     &           & 83   & 157  & 1.501935E-02 & 6.937614E-02 & 2.064464E-02 & -9.282172E-03 & -9.283588E-03 & 2.504663E-13 & 4.679297E-05 \\
QPNBOEI1     &           & 384  & 891  & 6.116935E-01 & 1.115413E+01 & 1.653756E-01 & -4.366064E+01 & -4.366092E+01 & 7.786588E-12 & 8.287324E-05 \\
QPNBOEI2     &           & 143  & 363  & 4.073949E-02 & 1.011505E+00 & 6.225512E-02 & -1.003864E+03 & -1.004486E+03 & 1.224766E-09 & 8.420519E-05 \\
QPNSTAIR     &           & 467  & 905  & 7.413311E-01 & 1.500511E+01 & 2.324631E-01 & -1.150055E+02 & -1.150130E+02 & 4.352719E-11 & 2.310892E-04 \\
SOSQP1       & N=1000    & 2000 & 5001 & 1.025841E+00 & 3.387020E+02 & 2.187173E-01 & -5.000000E-01 & -5.000000E-01 & 3.108624E-14 & 4.572343E-09 \\
SOSQP2       & N=1000    & 2000 & 5001 & 1.189768E+00 & 2.171972E+03 & 1.061519E+00 & -5.000000E-01 & -5.000000E-01 & 4.940492E-15 & 4.988599E-09 \\
STATIC3      &           & 434  & 240  & 9.991469E-01 & 7.633038E-01 & 5.708679E-02 & -1.473959E+04 & -1.473959E+04 & 4.599613E-06 & 1.455835E-04 \\
STNQP1       & P=10      & 1025 & 2560 & 3.909290E-01 & 5.022181E+01 & 1.634204E-01 & -1.629919E+03 & -1.629919E+03 & 2.557954E-13 & 1.049186E-08 \\
STNQP2       & P=10      & 1025 & 2560 & 3.919327E-01 & 4.356643E+01 & 1.601439E-01 & -1.629919E+03 & -1.629919E+03 & 4.547474E-13 & 1.047931E-08
	\end{tabular}}
\end{sidewaystable}

\end{document}